\documentclass[11pt]{article}
\usepackage[utf8]{inputenc}
\usepackage[english]{babel}
\usepackage[a4paper,margin=1in]{geometry}
\usepackage{amsmath,amssymb,amsthm,mathtools}
\usepackage{enumitem}
\usepackage{booktabs}
\usepackage{graphicx}
\usepackage{xcolor}
\usepackage{tikz}
\usetikzlibrary{arrows.meta, positioning, decorations.pathreplacing, decorations.pathmorphing, calc, fit}
\usepackage{authblk}
\usepackage{orcidlink}
\usepackage{verbatim}
\usepackage{hyperref}
\hypersetup{colorlinks=true,linkcolor=blue!50!black,citecolor=blue!50!black,urlcolor=blue!50!black}

\newtheorem{theorem}{Theorem}[section]
\newtheorem{proposition}[theorem]{Proposition}
\newtheorem{lemma}[theorem]{Lemma}
\newtheorem{corollary}[theorem]{Corollary}
\theoremstyle{definition}

\newtheorem{example}[theorem]{Example}
\theoremstyle{remark}
\newtheorem{remark}[theorem]{Remark}

\newcommand{\Intn}{\mathrm{Int}_n}

\newcommand{\HM}{\mathrm{HM}}
\newcommand{\Star}{\mathrm{Star}}

\title{\large\textbf{A gap theorem for non-trivial maximal intersecting
families and an exact weighted asymptotic}}

\author[1,2]{\textbf{El'mira Yu.\ Kalimulina}\orcidlink{0000-0001-7158-040X}\thanks{\texttt{eyk@iitp.ru}}}

\affil[1]{Institute for Information Transmission Problems
of the Russian Academy of Sciences (Kharkevich Institute), Moscow 127051, Russia}
\affil[2]{Lomonosov Moscow State University, Moscow, Russia}

\date{}

\begin{document}
\maketitle

\begin{abstract}
\emergencystretch=1.5em
Let $D_n$ be the disjointness graph on the nonempty subsets of
$[n]=\{1,\dots,n\}$, whose independent sets are exactly the
intersecting families on $[n]$. We study the weighted independent-set
polynomial $W(n)=\sum_{\mathcal F}\prod_{S\in\mathcal F}w(S)$ for the
doubly exponential weight $w(S)=2^{2^{n-|S|}}-1$. The kernel-bearing
(trivial) contribution is exact by inclusion--exclusion,
$Z_\cap(n)=\sum_{j=1}^{n}(-1)^{j+1}\binom nj\,2^{3^{n-j}}$
$\sim n\cdot2^{3^{n-1}}$, and we determine the leading suppression exponent
of the kernel-free remainder $R(n)$. We prove the exact prefactor
\[
R(n)=\Bigl(\tfrac34+o(1)\Bigr)\,n\cdot2^{\,3^{n-1}-2^{n-1}+2},
\qquad\text{whence}\qquad
\log_2\frac{Z_\cap(n)}{R(n)}=2^{n-1}-2+\log_2\tfrac43+o(1),
\]
an \emph{additive} $o(1)$ rather than only a leading-order one. The
engine is a second-level extremal theorem: among kernel-free maximal
linked systems other than the $n$ one-flip stars, the largest weight
exponent is $3^{n-1}-3\cdot2^{n-2}+6$, a fixed gap $2^{n-2}-4$ below
the maximum, with the extremisers classified exactly. None of this is
special to the weight $2^{2^{n-|S|}}-1$: for the entire family
$w_B(S)=B^{\,B^{\,n-|S|}}-1$ ($B\in\mathbb Z_{\ge2}$) the same stars
dominate, the same near-extremal families sit a gap $B^{\,n-2}-B^{2}$
below, and the prefactor is the explicit $1-B^{-B}$. The
combinatorial input is the $p$-biased extremal problem for
non-trivial intersecting families: for every $n\ge3$ and
$0<p\le\tfrac12$,
\[
M_2(n,p)\;=\;p-pq^{\,n-1}+qp^{\,n-1},\qquad q=1-p.
\]
This first level is essentially known: the extremal family is the
Wheel coterie of Peleg and Wool, and at $p=1/Q$ the statement, with
its maximiser classification, is the case $r=n$ of Borg's
Hilton--Milner theorem for signed sets (2013). We include a short
self-contained Erd\H{o}s--Ko--Rado proof, uniform in real
$p\in(0,\tfrac12]$, whose layer-two rigidity is what the second-level
theorem consumes. The novelty claimed lies at the second level and in
the prefactor; there the \emph{classification} cannot be recovered
from the layer profile alone -- at $n=5$ one layer profile carries two
non-isomorphic types of second-level extremisers, so no function of
the profile can list them.
\end{abstract}

\medskip
\medskip
\noindent\textbf{Keywords:} intersecting families; maximal linked
systems; $p$-biased measure; Erd\H{o}s--Ko--Rado; Hilton--Milner;
independent-set polynomial; profile polytope; weighted enumeration.

\medskip
\noindent\textbf{2020 MSC:} 05D05, 05A16, 05C69; secondary 82B20.

\section{Introduction}\label{sec:intro}

\subsection{The model}\label{sec:model}

Let $[n]=\{1,\dots,n\}$ and let $V_n:=2^{[n]}\setminus\{\varnothing\}$ be
the set of its nonempty subsets. The \emph{disjointness graph} $D_n$ has
vertex set $V_n$, with $S\sim T$ whenever $S\cap T=\varnothing$. A set
$\mathcal F\subseteq V_n$ is an \emph{independent set} of $D_n$ --
equivalently, no two of its members are disjoint -- if and only if
$\mathcal F$ is an \emph{intersecting family}: $A\cap B\neq\varnothing$
for all $A,B\in\mathcal F$, not necessarily distinct; in particular an
intersecting family contains no empty set. We call $\mathcal F$
\emph{non-trivial} (or \emph{kernel-free}) if in addition
$\ker\mathcal F:=\bigcap_{F\in\mathcal F}F=\varnothing$, and
\emph{trivial} (\emph{kernel-bearing}) otherwise -- all members then
share a common element $i\in[n]$, and we write
$\Star(i)=\{F\in V_n:i\in F\}$ for the (unique, inclusion-maximal)
intersecting family with $i$ in its kernel. 

Given a weight function $w:V_n\to\mathbb R_{>0}$, the associated
\emph{weighted independent-set polynomial} 
is
\begin{equation}\label{eq:W}
Z(D_n,w)\;:=\;\sum_{\mathcal F\in\mathcal I(D_n)}\;\prod_{S\in\mathcal
F}w(S),
\end{equation}
the sum running over all independent sets $\mathcal F$ of $D_n$
(equivalently, all intersecting families of nonempty subsets of $[n]$),
with the empty product assigning weight~$1$ to $\mathcal F=\varnothing$.
We write $\Intn:=\mathcal I(D_n)$ for this set of intersecting families
(including $\varnothing$, whose kernel is $[n]$ by the convention
$\ker\varnothing:=[n]$).
This is the object of study of the present paper, for two related choices
of $w$: a doubly exponential, highly non-uniform one
(\S\ref{sec:main-result}), and the classical uniform-per-element one
(\S\ref{sec:extremal-input}).

\subsection*{Main results}

The central part of the paper is a rigidity phenomenon for maximal linked
systems and its enumerative consequence. Below we summarise the main
theorems and their logical dependency. %

\begin{itemize}[leftmargin=*]
\item[\textbf{A.}] \emph{(Second-level gap; Theorem~\ref{thm:second}.)}
For $n\ge5$, every kernel-free maximal linked system other than the $n$
one-flip stars $\mathcal F_a^{*}$ has weight exponent
$\Lambda(M)\le M_n^{*}-(2^{n-2}-4)$, a fixed gap below the maximum
$M_n^{*}=3^{n-1}-2^{n-1}+2$, and the extremisers are classified exactly
($n(n-1)$ punctured-star systems, plus $10$ Ahlswede--Khachatrian balls when
$n=5$). The division of labour with profile methods is made precise in
Proposition~\ref{prop:oneprofile} and Remark~\ref{rem:secondgerbner}:
the two \emph{values} $M_n^{*}$ and $M_n^{*}-(2^{n-2}-4)$ follow from a
linear optimisation over layer profiles, while the
\emph{classification} provably does not -- at $n=5$ the second level is
carried by a \emph{single} profile realised by two non-isomorphic
family types, so no profile-based description can output the list of
extremisers.
\item[\textbf{B.}] \emph{(Exact prefactor; Theorem~\ref{thm:prefactor},
Corollary~\ref{cor:additive}.)} The second-level gap forces the exact
prefactor of the kernel-free remainder,
\[
R(n)=\Bigl(\tfrac34+o(1)\Bigr)n\cdot2^{M_n^{*}},
\qquad
\log_2\frac{Z_\cap(n)}{R(n)}=2^{n-1}-2+\log_2\tfrac43+o(1),
\]
an \emph{additive} sharpening of the leading exponent
(Theorem~\ref{thm:main}).
\item[\textbf{C.}] \emph{(Extremal input; Theorem~\ref{thm:main-mu}.)}
The gap in A rests on the exact $p$-biased maximum
$M_2(n,p)=p-pq^{\,n-1}+qp^{\,n-1}$ over non-trivial intersecting
families and the rigidity Lemma~\ref{lem:rigidity} extracted from its
proof. This first level is essentially known: the extremal family is the
Wheel coterie of the quorum-availability literature, in which $\mu_p$
is the system failure probability studied by Peleg and
Wool~\cite{PelegWool1995}, and at every rational $p=1/Q$, $Q\ge3$,
the value follows from, and the maximiser list coincides with, Borg's
Hilton--Milner theorem for signed sets~\cite{Borg2013} (restated
in~\cite{KwanSudakovVieira2018}; the $t=1$ case
of~\cite{FranklNie2026,HouHu2026}), as \S\ref{sec:known} details.
Theorem~\ref{thm:main-mu} contributes a short self-contained
Erd\H os--Ko--Rado proof, uniform in real $p\in(0,\tfrac12]$, and --
crucially for A -- the layer-two rigidity that this proof yields.
\end{itemize}

\noindent The dependency is
$\text{C}\Rightarrow\text{A}\Rightarrow\text{B}$: the extremal theorem
supplies $M_n^{*}$ and the rigidity lemma; rigidity gives the
second-level gap; the gap gives the exact prefactor. Thus $M_2(n,p)$
enters as a lemma in service of the main theorems A and B, not as an
end in itself. Finally, all three are shown to be \emph{structural}:
they hold verbatim, with explicit closed forms, for the whole
one-parameter weight family $w_B(S)=B^{\,B^{\,n-|S|}}-1$,
$B\in\mathbb Z_{\ge2}$ (Theorem~\ref{thm:generalB}), so no constant
above is an artefact of the base $B=2$.

\begin{remark}[Geometric realisation]\label{rem:hypercube}
Identifying each $S\in V_n$ with its indicator vector, $V_n$ is exactly
the set of nonzero vertices of the hypercube $Q_n=\{0,1\}^n$, and the
covering relation used throughout \S\ref{sec:prelim}--\S\ref{sec:main-proof}
is exactly the edge set of $Q_n$. Under this identification,
$\Star(i)=\{x\in Q_n:x_i=1\}$ is precisely one of the $2n$ facets of
$Q_n$ (Figures~\ref{fig:cube} and~\ref{fig:tesseract}), and the
complementary-pair matching that drives the argument of
\S\ref{sec:main-proof} -- $S\leftrightarrow[n]\setminus S$ -- is exactly
the antipodal matching on $Q_n$. Nothing in the proofs depends on this
identification; it is recorded because
Figures~\ref{fig:cube}--\ref{fig:tesseract} are drawn in these terms.
\end{remark}

\subsection{Extremal theorem: a known first level}
\label{sec:extremal-input}

The combinatorial input for our main results is an extremal theorem
for the $p$-biased measure on $D_n$, whose first level is essentially
known (\S\ref{sec:known}); it feeds the leading
exponent of \S\ref{sec:main-result}, and the exact prefactor there
additionally uses the second-level theorem of \S\ref{sec:secondlevel}. For
$0<p<1$, $q=1-p$, put $\lambda:=p/q$ and note that
\begin{equation}\label{eq:mu-def}
\mu_p(\mathcal F)\;:=\;\sum_{S\in\mathcal F}p^{|S|}q^{\,n-|S|}
\;=\;q^n\sum_{S\in\mathcal F}\lambda^{|S|},
\end{equation}
so that, up to the overall factor $q^n$, $\mu_p(\mathcal F)$ is an
additive $p$-biased layer functional of $\mathcal F$ (a sum over its
members, not the multiplicative hard-core weight
$\prod_{S}z_S$). At $p=\tfrac13$, after multiplication by $3^n$, it
becomes the exponent $\Lambda$ of the multiplicative sector partition
function of \S\ref{sec:main-result}.
Following Tokushige~\cite{Tokushige2024} write
\[
M_2(n,p)\;:=\;\max\{\mu_p(\mathcal F):\mathcal F\subseteq V_n\text{ is
non-trivial}\},
\]
the case $r=2$ of the general quantity $M_r^t(n,p)$ for non-trivial
$r$-wise $t$-intersecting families. (For $n\le2$ there is no
non-trivial intersecting family, so we assume $n\ge3$ throughout.) For
$p<\tfrac12$, i.e.\ $\lambda<1$,
it has long been known that $M_2(n,p)<p=\mu_p(\Star(i))$: trivial
configurations always dominate, but by a margin that must be quantified.
The value of this margin is not new as an object -- it is a known
extreme point of the profile polytope of non-trivial intersecting
families~\cite{EFK1985,Gerbner2021}, see \S\ref{sec:known} -- and we
state our contribution accordingly: an elementary, self-contained
proof of the closed form, and the complete classification of the
maximisers at every finite $n$.

\begin{theorem}[Extremal exponent]\label{thm:main-mu}
Let $n\ge 3$ and $0<p\le\tfrac12$. Then
\begin{equation}\label{eq:M2}
M_2(n,p)\;=\;p-pq^{\,n-1}+qp^{\,n-1},
\end{equation}
attained by the one-flip stars
\begin{equation}\label{eq:oneflip}
\mathcal F_a^{*}\;:=\;\bigl(\Star(a)\setminus\{\{a\}\}\bigr)
\cup\{[n]\setminus\{a\}\},\qquad a\in[n].
\end{equation}
Moreover, for $0<p<\tfrac12$:
\begin{enumerate}[label=(\roman*),leftmargin=*]
\item if $n\ge 5$, the maximisers are exactly the $n$ families
$\mathcal F_a^{*}$, and these $n$ families are pairwise distinct;
\item if $n=4$, there are exactly eight maximisers: the four one-flip
stars and the four \emph{triangle systems}
$\mathcal T_B=\binom{B}{2}\cup\binom{[4]}{3}\cup\{[4]\}$,
$B\in\binom{[4]}{3}$;
\item if $n=3$, the unique maximiser is the (single) triangle system,
which coincides with every $\mathcal F_a^{*}$.
\end{enumerate}
At $p=\tfrac12$ (critical activity $\lambda=1$) the bound \eqref{eq:M2}
equals $\tfrac12$ and is attained exactly by the non-condensed maximal
independent sets, and by no other non-condensed family: the energy gap
of \S\ref{sec:main-result} closes entirely at criticality.
\end{theorem}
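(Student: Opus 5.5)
The plan is an LP-type argument over layer profiles in which complementary pairs of sets are played against each other. Fix $0<p\le\tfrac12$. Write $w_k:=p^kq^{n-k}$, so that $w_k\ge w_{n-k}$ for $k\le n/2$, with strict inequality when $p<\tfrac12$ and $k<n/2$. For a non-trivial intersecting $\mathcal F$ let $f_k:=|\mathcal F\cap\binom{[n]}{k}|$. Then $\mu_p(\mathcal F)=\sum_k f_kw_k$.

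\textbf{Step 1: constraints on the profile.} Three constraints hold.
\begin{enumerate}[label=(\alph*),leftmargin=*]
\item Complementary pairs: $S$ and $[n]\setminus S$ are disjoint, so at most one lies in $\mathcal F$. This gives $f_k+f_{n-k}\le\binom nk$, and $f_{n/2}\le\tfrac12\binom{n}{n/2}$ when $n$ is even.
\item Erd\H os--Ko--Rado: $f_k\le\binom{n-1}{k-1}$ for $2\le k<n/2$.
\item Non-triviality: $f_1=0$, since a singleton $\{a\}\in\mathcal F$ forces every member to contain $a$, so $a\in\ker\mathcal F$.
\end{enumerate}

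\textbf{Step 2: the bound.} Group the layers in pairs $k<n/2<n-k$. For each pair,
\[
f_kw_k+f_{n-k}w_{n-k}\le f_k(w_k-w_{n-k})+\tbinom nk w_{n-k}.
\]
The right-hand side is nondecreasing in $f_k$. So for $k\ge2$ it is maximised at the EKR value $f_k=\binom{n-1}{k-1}$, and for $k=1$ it is forced to $f_1=0$. Adding the middle layer, the resulting upper bound equals the profile of $\Star(a)$, which has $\mu_p=p$, except that in layer pair $(1,n-1)$ it uses $(0,n)$ in place of $(1,n-1)$. Hence
\[
\mu_p(\mathcal F)\le p-(w_1-w_{n-1})=p-pq^{n-1}+qp^{n-1}.
\]
A direct count shows that $\mathcal F_a^*$ attains this: removing $\{a\}$ loses $pq^{n-1}$, and adding $[n]\setminus\{a\}$ gains $qp^{n-1}$. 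This proves \eqref{eq:M2}.

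\textbf{Step 3: classification for $p<\tfrac12$.} Equality forces the following.
\begin{itemize}[leftmargin=*]
\item $f_k=\binom{n-1}{k-1}$ for every $2\le k<n/2$, since $w_k>w_{n-k}$ strictly there.
\item $f_k+f_{n-k}=\binom nk$ for every $k$, so $\mathcal F$ contains exactly one member of every complementary pair. In particular $\mathcal F$ is a maximal intersecting family and is upward closed.
\end{itemize}

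For $n\ge5$ we have $2<n/2$. By the uniqueness part of EKR for $k<n/2$, layer $2$ is a full star at some point $a$; this is the layer-two rigidity that the paper later reuses. An upward-closed family containing all pairs $\{a,x\}$ contains every set through $a$ of size at least $2$. Complementarity then fixes the rest: the sets avoiding $a$ in $\mathcal F$ are exactly the complements of the sets through $a$ missing from $\mathcal F$. Only $\{a\}$ is missing, so $\mathcal F=\mathcal F_a^*$.

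For $n=4$, layer $2$ is the middle layer. An intersecting family of six... rather, of three $2$-sets of $[4]$ taken one from each complementary pair is either a star or a triangle $\binom B2$. The triangle case gives $\mathcal T_B$ after completion by complements and upward closure; the star case gives $\mathcal F_a^*$. For $n=3$ one checks directly that the only possibility is $\binom{[3]}{2}\cup\{[3]\}$.

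\textbf{Step 4: the critical case $p=\tfrac12$.} Here every $w_k-w_{n-k}$ vanishes, so the bound is $\tfrac12$. Equality holds if and only if $\mathcal F$ meets every complementary pair, that is, if and only if $\mathcal F$ is a maximal intersecting family; non-triviality is automatic from the hypothesis.

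\textbf{Main obstacle.} I expect the main difficulty to lie in the equality analysis rather than the bound. It needs the EKR uniqueness in layer $2$ (and its failure in the middle layer at $n=4$), together with the passage from layer $2$ to the whole family through upward closure and the complement pairing.
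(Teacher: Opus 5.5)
Your proposal is correct and is essentially the paper's argument. It compares pair by pair with $\Star(a)$, using $f_1=0$, Erd\H os--Ko--Rado on the layers $2\le k<n/2$ and the neutrality of the middle layer. It then settles equality via EKR uniqueness at $k=2$, reconstruction through complementarity, and the eight transversals at $n=4$. The one cosmetic difference is that you keep an arbitrary non-trivial family and the inequality $f_k+f_{n-k}\le\binom nk$, so its equality case does the job of the paper's preliminary reduction to kernel-free maximal linked systems (Lemma~\ref{lem:reduction} together with the positivity remark).
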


\begin{corollary}[$Q$-ary weight exponents]\label{cor:qary}
Let $Q\ge 2$ be an integer and define the $Q$-ary weight exponent of a
family by $\Lambda_Q(\mathcal F):=\sum_{S\in\mathcal F}(Q-1)^{n-|S|}$.
Then for every non-condensed $\mathcal F$ on $[n]$, $n\ge3$,
\[
\Lambda_Q(\mathcal F)\;\le\;Q^{n-1}-(Q-1)^{n-1}+(Q-1),
\]
with equality for the one-flip stars. For $Q\ge 3$ the equality cases
are exactly those of Theorem~\ref{thm:main-mu} at $p=1/Q<\tfrac12$; for
$Q=2$ every non-condensed maximal independent set attains the bound.
(Take $p=1/Q$ in Theorem~\ref{thm:main-mu} and multiply by $Q^n$.) At
$Q=3$ this is the exponent $3^{n-1}-2^{n-1}+2$ used in
\S\ref{sec:main-result}. For integer $Q\ge3$ both the bound and the equality classification
are, via the embedding $\Phi_Q$ of \S\ref{sec:known}, exactly the case
$r=n$ of Borg's Hilton--Milner theorem for signed
sets~\cite[Theorem~2.3]{Borg2013} (restated in the multi-part setting
in~\cite[Theorem~2]{KwanSudakovVieira2018}, and the $t=1$ case
of~\cite{FranklNie2026,HouHu2026}); the corollary is thus a
Boolean-weighted restatement of a known theorem, and the route through
Theorem~\ref{thm:main-mu} an independent elementary proof of it. The
same product-to-Boolean-lattice reduction appears, for arbitrary part
sizes, as~\cite[Theorem~1.2]{HouHu2026}; at four coordinates their
reduction exhibits the star--triangle dichotomy (their Theorem~4.1)
seen here at $n=4$.
\end{corollary}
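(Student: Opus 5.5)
The plan is to specialise Theorem~\ref{thm:main-mu} at $p=1/Q$ and then rescale. "Non-condensed" here means kernel-free, i.e.\ non-trivial intersecting. With $p=1/Q$ and $q=(Q-1)/Q$, each $S$ contributes
\[
Q^n p^{|S|}q^{n-|S|}=(Q-1)^{n-|S|},
\]
so $Q^n\mu_{1/Q}(\mathcal F)=\Lambda_Q(\mathcal F)$ for every family $\mathcal F\subseteq V_n$. For $Q\ge2$ we have $p=1/Q\in(0,\tfrac12]$, so \eqref{eq:M2} applies to every non-trivial $\mathcal F$:
\[
\Lambda_Q(\mathcal F)\le Q^n\bigl(p-pq^{n-1}+qp^{n-1}\bigr).
\]
Multiplying out the three terms gives
\[
Q^{n-1}-(Q-1)^{n-1}+(Q-1).
\]
This matches, for instance, the direct count for $\mathcal F_a^*$: the star contributes $\sum_{k\ge1}\binom{n-1}{k-1}(Q-1)^{n-k}=Q^{n-1}$, removing $\{a\}$ costs $(Q-1)^{n-1}$, and adding $[n]\setminus\{a\}$ gains $(Q-1)^{1}$. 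So the one-flip stars attain the bound.

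For the equality cases I will treat the two ranges of $Q$ separately.
\begin{itemize}[leftmargin=*]
\item \textbf{$Q\ge3$.} Here $p=1/Q<\tfrac12$, and the rescaling is a bijection that preserves maximisers. Items (i)--(iii) of Theorem~\ref{thm:main-mu} therefore transfer verbatim: for $n\ge5$ the equality cases are the one-flip stars, for $n=4$ the one-flip stars and triangle systems, and for $n=3$ the triangle system.
\item \textbf{$Q=2$.} Here $\Lambda_2(\mathcal F)=|\mathcal F|$ and the bound reads $2^{n-1}$. I invoke the criticality clause of Theorem~\ref{thm:main-mu}: at $p=\tfrac12$ the bound is attained by every non-condensed maximal independent set. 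Directly, a maximal intersecting family contains exactly one of each complementary pair $\{S,[n]\setminus S\}$, hence has $2^{n-1}$ members.
\end{itemize}
The $Q=3$ value $3^{n-1}-2^{n-1}+2$ is then just substitution.

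The remaining assertions concern the identification with Borg's signed-set Hilton--Milner theorem via $\Phi_Q$. Their justification is deferred to \S\ref{sec:known}, where the embedding is defined. The idea there is to map a family of signed $n$-sets over alphabet $[Q]$ to a Boolean family, recording for each coordinate whether it carries a fixed "distinguished" value. The uniform product measure on signed sets then pushes forward to the $1/Q$-biased measure on $2^{[n]}$. I expect this identification step to be the only nontrivial point, specifically checking that intersecting and non-trivial are preserved in both directions so that the maximiser lists correspond. Everything else is the rescaling above.
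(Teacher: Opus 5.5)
Your argument is correct and is the paper's own: the corollary is proved there only by the parenthetical instruction to set $p=1/Q$ in Theorem~\ref{thm:main-mu} and multiply by $Q^n$, and the Borg identification is an attribution resting on \S\ref{sec:known}, just as you defer it. One small correction to that deferred remark: $\Phi_Q$ is used only in the direction from Boolean families to families in $[Q]^n$ (it sends a kernel-free intersecting $\mathcal F$ to a non-trivial intersecting family of size $\Lambda_Q(\mathcal F)$, with $\Phi_Q(\mathcal F_1^{*})=N_{n,n,Q}$), and the reverse projection $x\mapsto\{i:x_i=1\}$ does not preserve the intersecting property, since two vectors may agree only in a non-distinguished value.
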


The proof of Theorem~\ref{thm:main-mu} (\S\ref{sec:main-proof}) reduces
the problem to \emph{maximal} independent sets of $D_n$ -- maximal
linked systems, in the terminology of \S\ref{sec:prelim} -- and compares
any non-condensed one with a condensed sector pair-by-pair on the
complementary partition of $V_n$, using the Erd\H os--Ko--Rado
theorem~\cite{EKR1961} on each uniform layer. The argument is
elementary: it uses neither the polytope/Kruskal--Katona machinery
discussed next nor stability results~\cite{Friedgut2008}.

\subsection{Main result: the weighted independent-set polynomial
and its leading suppression exponent}\label{sec:main-result}

For the site-dependent, doubly exponential weight
\begin{equation}\label{eq:weight}
w(S)\;:=\;2^{2^{n-|S|}}-1\qquad(S\in V_n),
\end{equation}
write $W(n):=Z(D_n,w)$; this grows doubly exponentially as $|S|$
decreases, so the lower layers carry overwhelmingly the largest weights
and are the most favoured. Decompose
$W(n)=Z_\cap(n)+R(n)$, where $Z_\cap(n)$ collects the trivial
(kernel-bearing) families and $R(n)$ the non-trivial (kernel-free)
ones.

\begin{figure}[t]
\centering
\includegraphics[width=0.82\textwidth]{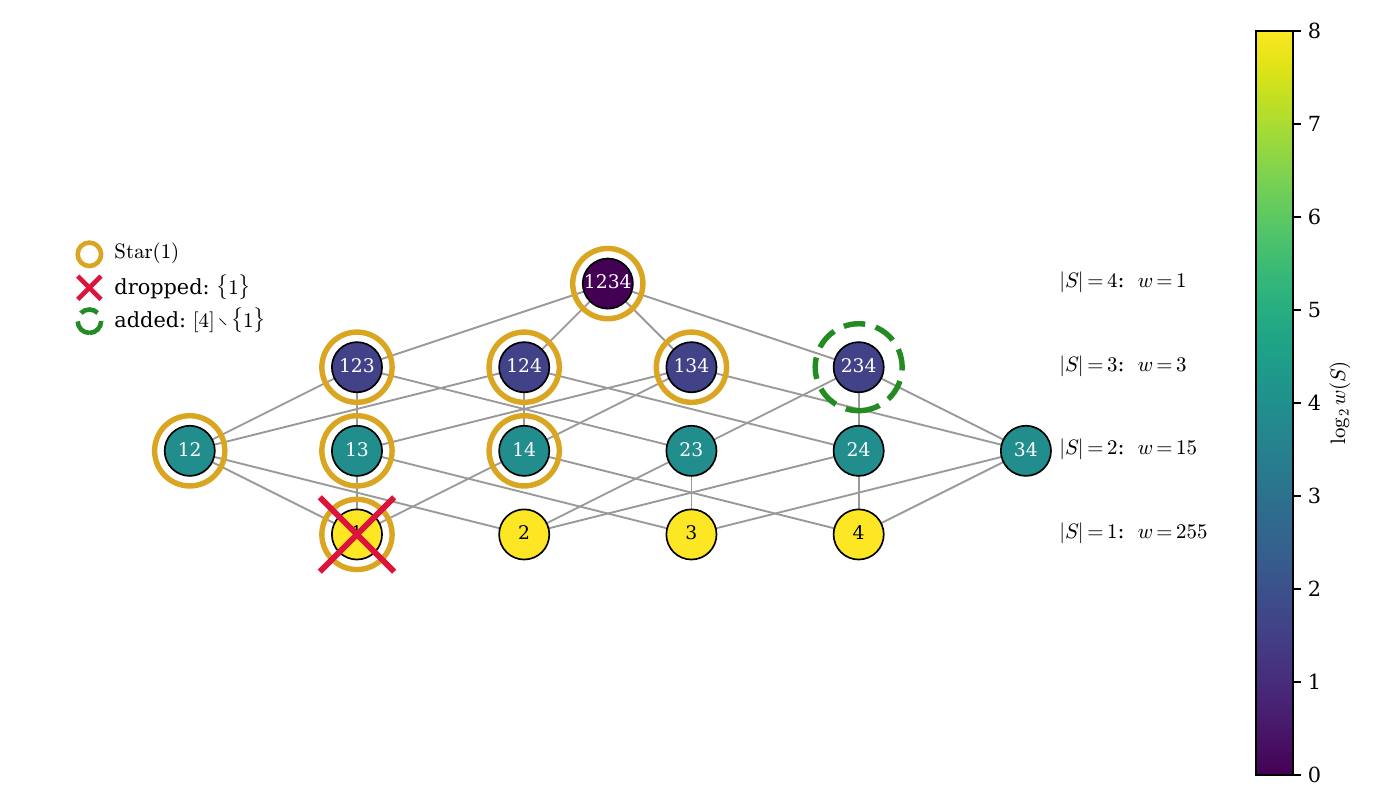}
\caption{The weight~\eqref{eq:weight} on $V_4=2^{[4]}\setminus\{\varnothing\}$,
and the one-flip star~\eqref{eq:oneflip} that attains
Theorem~\ref{thm:main-mu}. Node colour encodes $\log_2 w(S)$; the four
layer weights ($255,15,3,1$) show the doubly exponential decay
directly. Gold ring: the star $\Star(1)$. The one-flip modification --
drop $\{1\}$ (red cross), add $[n]\setminus\{1\}$ (green ring) -- is
the cheapest way to destroy the common element while losing the least
weight.}
\label{fig:hasse}
\end{figure}

\begin{theorem}[Leading suppression exponent]\label{thm:main}
$W(n)\sim n\cdot 2^{3^{n-1}}$ as $n\to\infty$. More precisely,
\begin{equation}\label{eq:Zcap-formula}
Z_\cap(n)\;=\;\sum_{j=1}^{n}(-1)^{j+1}\binom{n}{j}\,2^{3^{n-j}}
\;\sim\;n\cdot 2^{3^{n-1}},
\end{equation}
and the fraction of the partition function carried by non-condensed
sectors obeys the unconditional, non-asymptotic upper bound
\begin{equation}\label{eq:rate}
\frac{R(n)}{Z_\cap(n)}\;\le\;
\frac{\lambda(n)}{n-1}\cdot 2^{-(2^{n-1}-2)}
\qquad(n\ge3),
\end{equation}
where $\lambda(n)$ is the number of non-condensed maximal independent
sets of $D_n$, together with the matching lower bound
\begin{equation}\label{eq:rate-lower}
\frac{R(n)}{Z_\cap(n)}\;\ge\;\Bigl(\frac34-o(1)\Bigr)\cdot
2^{-(2^{n-1}-2)}\qquad(n\to\infty).
\end{equation}
Since $\log_2\lambda(n)=O(2^n n^{-1/2})=o(2^{n-1})$ (Lemma~\ref{lem:MLS-count}),
\eqref{eq:rate} and~\eqref{eq:rate-lower} together give the sharp
exponent
\begin{equation}\label{eq:sharp}
\log_2\frac{Z_\cap(n)}{R(n)}\;=\;2^{n-1}\bigl(1+o(1)\bigr),
\end{equation}
so the exponent $2^{n-1}$ in the suppression factor is sharp at the
leading scale. In fact the additive constant is also determined:
Theorem~\ref{thm:prefactor} below sharpens this to
$R(n)=(\tfrac34+o(1))\,n\cdot2^{M_n^{*}}$ and
$\log_2(Z_\cap/R)=2^{n-1}-2+\log_2\tfrac43+o(1)$
(Corollary~\ref{cor:additive}), via the second-level gap of
Theorem~\ref{thm:second}.
\end{theorem}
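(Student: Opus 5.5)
Three things have to be established: the inclusion--exclusion formula \eqref{eq:Zcap-formula} for $Z_\cap(n)$, the upper bound \eqref{eq:rate}, and the lower bound \eqref{eq:rate-lower}; the sharp exponent \eqref{eq:sharp} then follows by taking logarithms.

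\textbf{Exact formula for $Z_\cap(n)$.} For $J\subseteq[n]$ nonempty, the families with $J\subseteq\ker\mathcal F$ are exactly the subfamilies of $\{S:J\subseteq S\}$. Any two such sets meet in $J$, so every subfamily is automatically intersecting. Hence
\[
\sum_{\mathcal F\subseteq\{S\supseteq J\}}\prod_{S\in\mathcal F}w(S)=\prod_{S\supseteq J}\bigl(1+w(S)\bigr)=\prod_{T\subseteq[n]\setminus J}2^{2^{n-|J|-|T|}}=2^{\sum_{k}\binom{n-j}{k}2^{n-j-k}}=2^{3^{n-j}},
\]
where $j=|J|$. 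Inclusion--exclusion over the nonempty kernel gives \eqref{eq:Zcap-formula}. For the asymptotics, the $j=1$ term is $n2^{3^{n-1}}$. The remaining terms total at most $2^n\cdot2^{3^{n-2}}$, which is negligible since $3^{n-1}-3^{n-2}\gg n$. This also shows $Z_\cap(n)\ge (n-1)2^{3^{n-1}}$ for $n\ge3$: the $j=2$ term alone is at most $\binom n2 2^{3^{n-2}}\le 2^{3^{n-1}}$, and the later alternating terms are smaller still.

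\textbf{Upper bound.} Every kernel-free family lies in some non-condensed maximal linked system $M$, since enlarging an intersecting family only shrinks its kernel. So by a union bound,
\[
R(n)\le\sum_{M}\prod_{S\in M}(1+w(S))=\sum_M 2^{\Lambda(M)},\qquad \Lambda(M)=\sum_{S\in M}2^{n-|S|}.
\]
Here $\Lambda$ is exactly $\Lambda_3$ of Corollary~\ref{cor:qary}, because $(Q-1)^{n-|S|}=2^{n-|S|}$ at $Q=3$. The corollary therefore gives $\Lambda(M)\le 3^{n-1}-2^{n-1}+2$. Thus $R(n)\le\lambda(n)2^{3^{n-1}-2^{n-1}+2}$, and dividing by $Z_\cap(n)\ge(n-1)2^{3^{n-1}}$ yields \eqref{eq:rate}.

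\textbf{Lower bound.} This is the delicate step: a single family gives the right exponent but not the constant $\tfrac34 n$. Fix $a$ and let $A_a$ be the set of families $\mathcal F\subseteq\mathcal F_a^*$ that contain $[n]\setminus\{a\}$ and are kernel-free. Then
\[
\sum_{\mathcal F\in A_a}\prod w=w([n]\setminus\{a\})\Bigl(\prod_{S\in\Star(a)\setminus\{\{a\}\}}(1+w(S))-E_a\Bigr),
\]
where $E_a$ is the weight of those subfamilies $\mathcal G$ of $\Star(a)\setminus\{\{a\}\}$ whose union with $[n]\setminus\{a\}$ still has a kernel. Such a kernel is some $b\neq a$, so $\mathcal G$ lies in $\Star(a)\cap\Star(b)$. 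This gives $E_a\le (n-1)2^{3^{n-2}}$, which is negligible against the main product $2^{3^{n-1}-2^{n-1}}$, because $3^{n-2}\ll 3^{n-1}-2^{n-1}$. The weight $w([n]\setminus\{a\})=2^2-1=3$, and the main product equals $2^{3^{n-1}-2^{n-1}}$. So each $a$ contributes $(1+o(1))\tfrac34\cdot2^{3^{n-1}-2^{n-1}+2}$.

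The sets $A_a$ are pairwise disjoint: a family in $A_a$ contains $[n]\setminus\{a\}$, while a family in $A_b$ lies in $\mathcal F_b^*$, which contains only sets through $b$ together with $[n]\setminus\{b\}$. Since $[n]\setminus\{a\}\ni b$, disjointness is not immediate, and I would instead check it directly. A family in $A_a\cap A_b$ contains both $[n]\setminus\{a\}$ and $[n]\setminus\{b\}$. By definition of $A_b$, the set $[n]\setminus\{a\}$ belongs to $\mathcal F_b^*$, which is fine. But by definition of $A_a$, the set $[n]\setminus\{b\}$ must contain $a$, which holds, so that is fine too. The overlap is therefore not empty; however, it is contained in $\Star(a)\cap\Star(b)$ up to the two co-singletons, so its weight is at most $O(2^{3^{n-2}})$, again negligible. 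Summing over $a$ with this overlap correction gives
\[
R(n)\ge(\tfrac34-o(1))\,n\,2^{3^{n-1}-2^{n-1}+2},
\]
and dividing by $Z_\cap(n)\sim n2^{3^{n-1}}$ yields \eqref{eq:rate-lower}. Combining this with the upper bound and Lemma~\ref{lem:MLS-count} gives \eqref{eq:sharp}.
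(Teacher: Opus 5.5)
Your proof is correct and follows the paper's route step for step. The steps are: inclusion--exclusion plus a Bonferroni/alternating-series truncation for $Z_\cap(n)$; the union bound over kernel-free maximal linked systems with $M_n^{*}$ from Corollary~\ref{cor:qary} at $Q=3$; the single-sector count that forces $[n]\setminus\{a\}$ to get the factor $\tfrac34$, minus a kernel-bearing part of at most $3(n-1)2^{3^{n-2}}$; and Bonferroni over the $n$ sectors, with overlaps inside $\mathcal F_a^{*}\cap\mathcal F_b^{*}$ of exponent $3^{n-2}+4$ (Propositions~\ref{prop:Zcap}, \ref{prop:reduction}, \ref{prop:one-sector}, \ref{prop:n-sector}).

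The only fix needed is to delete your opening assertion that the $A_a$ are pairwise disjoint, which is false: for example $\{[n]\setminus\{a\},[n]\setminus\{b\},\{a,b\}\}\in A_a\cap A_b$. The overlap-correction argument you then give is exactly the one required.
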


The upper bound~\eqref{eq:rate} is proved in \S\ref{sec:application}
below by the union bound $R(n)\le\lambda(n)\cdot2^{M_n^{*}}$ over
kernel-free MLS. The lower bound~\eqref{eq:rate-lower} is proved there
too (Propositions~\ref{prop:one-sector}
and~\ref{prop:n-sector}): briefly, forcing the single ``repair'' set
$[n]\setminus\{a\}$ into a subfamily of $\mathcal F_a^{*}$ already
contributes a fraction $3/4$ of $2^{M_n^{*}}$, and summing this over
the $n$ choices of $a$ -- the $n$ contributions overlap by a
doubly-exponentially negligible amount, since
$\Lambda(\mathcal F_a^{*}\cap\mathcal F_b^{*})=3^{n-2}+4\ll M_n^{*}$
for $a\ne b$ -- gives $R(n)\ge\bigl(\tfrac34-o(1)\bigr)n\cdot2^{M_n^{*}}$.

\begin{corollary}[Gibbs concentration]\label{cor:gibbs}
Let $\mathbb P_n$ denote the Gibbs distribution of the hard-core gas
$(D_n,w)$, i.e.\ $\mathbb P_n(\mathcal F):=\prod_{S\in\mathcal
F}w(S)/W(n)$ on independent sets $\mathcal F$ of $D_n$. Then
\[
\mathbb P_n(\mathcal F\text{ is non-condensed})\;=\;\frac{R(n)}{W(n)}
\;=\;2^{-2^{n-1}(1+o(1))},
\]
i.e.\ a $(D_n,w)$-distributed random independent set is condensed --
contained in some single $\Star(i)$ -- with probability
$1-2^{-2^{n-1}(1+o(1))}\to1$. The sharper constant, $\mathbb
P_n(\text{non-condensed})=(3+o(1))2^{-2^{n-1}}$, follows from the exact
prefactor and is recorded in Corollary~\ref{cor:gibbs-sharp}.
\end{corollary}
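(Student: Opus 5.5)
Corollary~\ref{cor:gibbs} follows from Theorem~\ref{thm:main} once we rewrite the non-condensed probability as a ratio of the two pieces of $W(n)$. By definition of $\mathbb P_n$, the probability that $\mathcal F$ is non-condensed is the total weight of the kernel-free independent sets divided by $W(n)$. That numerator is $R(n)$. Since $W(n)=Z_\cap(n)+R(n)$, we get
\[
\frac{R(n)}{W(n)}=\frac{R(n)/Z_\cap(n)}{1+R(n)/Z_\cap(n)}.
\]
The only identification needed here is that \emph{condensed} means kernel-bearing. That is, $\mathcal F\subseteq\Star(i)$ for some $i$, which is exactly $\ker\mathcal F\neq\varnothing$. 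The empty family has $\ker\varnothing=[n]$ and therefore counts as condensed, consistent with it being included in $Z_\cap(n)$.

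Next we feed in the two-sided bounds \eqref{eq:rate} and \eqref{eq:rate-lower}. Put $x_n:=R(n)/Z_\cap(n)$. The upper bound gives $x_n\le\frac{\lambda(n)}{n-1}2^{-(2^{n-1}-2)}$. Because $\log_2\lambda(n)=o(2^{n-1})$ by Lemma~\ref{lem:MLS-count}, this is $2^{-2^{n-1}(1-o(1))}$, and in particular $x_n\to0$. The lower bound gives $x_n\ge(\tfrac34-o(1))2^{-(2^{n-1}-2)}=2^{-2^{n-1}(1+o(1))}$. Since $x_n\to0$, the factor $1/(1+x_n)$ equals $1+o(1)$, and it changes $\log_2$ by only $o(1)$. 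Hence $\log_2(R/W)=-2^{n-1}(1+o(1))$, which is the displayed estimate. The complementary statement, that $\mathcal F$ is condensed with probability $1-2^{-2^{n-1}(1+o(1))}\to1$, is immediate.

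For the sharper constant we instead invoke Theorem~\ref{thm:prefactor}, which gives $R(n)=(\tfrac34+o(1))\,n\,2^{M_n^*}$, together with $Z_\cap(n)\sim n\,2^{3^{n-1}}$ from \eqref{eq:Zcap-formula}. Then
\[
x_n=\bigl(\tfrac34+o(1)\bigr)2^{-(2^{n-1}-2)}=(3+o(1))\,2^{-2^{n-1}},
\]
and dividing by $1+x_n=1+o(1)$ preserves this asymptotic. There is no genuine obstacle in the corollary. The one point needing care is that the $o(1)$ in the crude version sits in the exponent. So we must check that both the subexponential factor $\lambda(n)/(n-1)$ and the passage from $Z_\cap$ to $W$ are absorbed there, which the lemma on $\lambda(n)$ and $x_n\to0$ ensure.
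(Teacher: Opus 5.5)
Your proposal is correct and follows the paper's own route: the crude estimate comes from the two-sided bounds \eqref{eq:rate} and \eqref{eq:rate-lower} of Theorem~\ref{thm:main} (with Lemma~\ref{lem:MLS-count} absorbing $\lambda(n)$), combined with $W(n)=Z_\cap(n)\bigl(1+R(n)/Z_\cap(n)\bigr)$ and $R(n)/Z_\cap(n)\to0$. The sharp constant $(3+o(1))2^{-2^{n-1}}$ comes from Theorem~\ref{thm:prefactor}, exactly as in the paper's proof of Corollary~\ref{cor:gibbs-sharp}.
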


The mechanism behind Theorem~\ref{thm:main} is a first-moment
(union-bound) competition between a weight gap and a count. Every
maximal independent set $\mathcal M$ of $D_n$ generates a total weight
$\prod_{S\in\mathcal M}(1+w(S))=2^{\Lambda(\mathcal M)}$ over its own
subfamilies, where $\Lambda(\mathcal M):=\sum_{S\in\mathcal
M}2^{n-|S|}$; the $n$ stars $\Star(i)$ each carry the maximal possible
exponent $\Lambda(\Star(i))=3^{n-1}$, while \emph{every} kernel-free
maximal independent set carries at most $3^{n-1}-2^{n-1}+2$
(Corollary~\ref{cor:qary} at $Q=3$; Figure~\ref{fig:hasse} shows this
gap concretely for $n=4$), and the number $\lambda(n)$ of kernel-free
maximal independent sets is bounded above by a Dedekind number,
$2^{O(2^n/\sqrt n)}$, so the count can never compensate the gap. Using
the exact values of $\lambda(n)$~\cite{BMMV2013,PawelskiSzepietowski2023},
the resulting suppression bound is already decisive at small $n$:
$\ge2^{4.58}$ at $n=4$, $\ge2^{20.9}$ at $n=6$, $\ge2^{188}$ at $n=9$. 


\subsection{Relation to prior work}\label{sec:known}

The quantity $M_2(n,p)$ is not new as an \emph{object}: its value is a
known extreme point of the profile polytope of non-trivial intersecting
families. Erd\H os, Frankl and Katona~\cite{EFK1985} determined the
extreme points of the profile polytope of \emph{all} intersecting
families; the one relevant here (their vector $a_2$, in the notation
used by~\cite{Gerbner2021}) is exactly the profile of $\mathcal
F_a^{*}$. Gerbner~\cite{Gerbner2021} determined, more generally, the
extreme points of the profile polytope of \emph{non-trivial} intersecting
families, which contain the same vector as a member of his family
$\Gamma_c$; combined with his case analysis (his Case~2c1), this
identifies $\mathcal F_a^{*}$ as the correct candidate for \emph{any}
fugacity profile $w(S)=q^n\lambda^{|S|}$ with $\lambda<1$ -- i.e.\ for
every $\mu_p$, $p<\tfrac12$ -- in a few lines on top of his general
theorem. What Theorem~\ref{thm:main-mu} retains here is (a)~a short elementary
proof that bypasses the polytope and Kruskal--Katona machinery
entirely, (b)~a single argument uniform over all $0<p\le\tfrac12$, and (c)~its
use, via Corollary~\ref{cor:qary}, as the gap input to
Theorem~\ref{thm:main}. Concerning~(b) we make the comparison precise
rather than merely negative: the polytope route identifies the
extremal \emph{profile vector} for each weight, so it yields all
maximisers only up to families sharing that profile; at $n=4$ the
one-flip star and the triangle system $\mathcal T_B$ have the
\emph{identical} profile $(0,0,3,4,1)$ (Remark~\ref{rem:AKball}), so
the eight-maximiser statement of Theorem~\ref{thm:main-mu}(ii), and
likewise the $n\ge5$ uniqueness statement, require an argument at the
level of families, not profiles -- which is what
\S\ref{sec:main-proof} supplies. The finite-$n$ maximiser list itself is \emph{not} new: at the
rational points $p=1/Q$, $Q\ge3$, it is contained in Borg's
signed-sets theorem, as explained two paragraphs below.

The systematic study of the size version of this problem goes back to
Brace and Daykin~\cite{BraceDaykin1971}, who determined the maximiser at
$p=\tfrac12$ for the $r$-wise generalisation, $r\ge2$; Frankl and
Tokushige~\cite{FranklTokushige2003,FranklTokushige2006} initiated the
weighted multiply-intersecting line, and
Tokushige~\cite{Tokushige2008,Tokushige2024} extended it to
$M_r(n,p)=\mu_p(\mathrm{BD}_r(n))$ for $r\ge8$ near $p=\tfrac12$, and
recently to the whole range $\tfrac13\le p\le\tfrac12$ for $r=3$, by
linear programming. For $r=2$, the strict inequality $M_2(n,p)<p$ for
$p<\tfrac12$ follows from the uniqueness part of the biased
Erd\H os--Ko--Rado inequality (see~\cite{Friedgut2008,Filmus2017}); a weighted
theorem of Bey and Engel~\cite{BeyEngel2000}, restated in $\mu_p$ form
and applied by Hou and Hu~\cite[Theorem~2.5]{HouHu2026}, bounds
$M_2(n,p)$ implicitly by the maximum of a Hilton--Milner-type term and a
family of Ahlswede--Khachatrian balls, without resolving which candidate
is the larger one; Theorem~\ref{thm:main-mu} resolves this (the
exceptional tie at $n=4$ is with the Ahlswede--Khachatrian ball
$\mathcal S_1$, Remark~\ref{rem:AKball}). The strongest predecessors of Theorem~\ref{thm:main-mu}, however, lie
in two adjacent literatures, and we state the correspondence precisely.
In the theory of quorum systems, a maximal linked system is (the
up-closure of) a \emph{non-dominated coterie}, and, by self-duality,
$\mu_p(M)$ is exactly the system failure probability $F_p$ studied by
Peleg and Wool~\cite{PelegWool1995}: each ground element fails
independently with probability $p$, and $F_p$ is the probability that
no quorum survives. Under this dictionary the one-flip star $\mathcal
F_a^{*}$ is the up-closure of the \emph{Wheel} coterie of that
literature, whose failure probability is the
polynomial~\eqref{eq:M2}, and $M_2(n,p)$ asks for the worst failure
probability among non-dominated coteries other than the singletons --
a quantity squarely within the programme of~\cite{PelegWool1995},
which studies precisely this failure functional, singles out the
Wheel, and determines the extremal range of $F_p$ over all
non-dominated coteries. Closer still,
Borg~\cite{Borg2013} proved a Hilton--Milner theorem for signed sets
which, in the case relevant here, determines the maximum size of a
non-trivial intersecting family in the product $[Q]^{n}$ (two vectors
intersecting when they agree in some coordinate) for every integer
$Q\ge2$, \emph{together with the equality classification for $Q\ge3$}:
the maximum is $Q^{n-1}-(Q-1)^{n-1}+(Q-1)$, attained for $n\ge5$ only
by his family $N_{n,n,Q}$, at $n=4$ additionally by the triangle
family $T_{4,4,Q}$, the two coinciding at $n=3$
(\cite[Theorem~2.3]{Borg2013} at $r=n$; a Hilton--Milner theorem for
the function lattice at support sizes $k<\ell$ -- a range disjoint
from the full-support case $k=\ell$ used here -- was proved earlier by
Erd\H os, Seress and Sz\'ekely~\cite{ErdosSeressSzekely2005}). The embedding
$\Phi_Q(\mathcal F)=\{x\in[Q]^{n}:\{i:x_i=1\}\in\mathcal F\}$ satisfies
$|\Phi_Q(\mathcal F)|=\Lambda_Q(\mathcal F)=Q^{n}\mu_{1/Q}(\mathcal
F)$ and, for $Q\ge3$, carries kernel-free intersecting families to
non-trivial intersecting families in $[Q]^{n}$, with
$N_{n,n,Q}=\Phi_Q(\mathcal F_1^{*})$ and
$T_{4,4,Q}=\Phi_Q(\mathcal T_{\{1,2,3\}})$; consequently, at every
rational $p=1/Q$ with integer $Q\ge3$, the value of
Theorem~\ref{thm:main-mu} \emph{follows from} Borg's theorem, and its
maximiser list coincides, case by case in $n$, with his equality
classification. Kwan, Sudakov and Vieira~\cite{KwanSudakovVieira2018} later
restated the bound in the multi-part setting (their Theorem~2), noting
explicitly that it is a special case of Borg's; the recent
$t$-intersecting product-model theorems of Frankl and
Nie~\cite{FranklNie2026} and Hou and Hu~\cite{HouHu2026} contain it as
their $t=1$ case, the Ahlswede--Khachatrian ball terms
of~\cite{HouHu2026} being the product-model form of the tie visible
here at $n=4$ (the ball $\mathcal S_1$, Remark~\ref{rem:AKball}) and
of the balls $\mathcal A_T$ at the second level at $n=5$
(Theorem~\ref{thm:second}). Theorem~\ref{thm:main-mu} should therefore
be read not as a new extremal maximum but as a known one, for which we
give a short self-contained proof, uniform over all real
$p\in(0,\tfrac12]$ at once, in the Boolean weighted form that the rest
of the paper consumes; we regard continuous $p$ as a convenience of
this single proof rather than a novelty claim, since the \emph{value}
at every real $p$ is in principle available from the profile-polytope
route discussed above. What we have not found in the
quorum, signed-set or multi-part literatures -- and what the present
paper is about -- is the \emph{second} level: the gap theorem and its
classification (Theorem~\ref{thm:second}), the exact
partition-function prefactor (Theorem~\ref{thm:prefactor}), and the
$B$-parameter prefactor constant $1-B^{-B}$
(Theorem~\ref{thm:generalB}); absence from our search is of course not
a proof of novelty.

On the entropy side, 
$\log_2\lambda(n)\le(1+o(1))\binom{n}{\lfloor
n/2\rfloor}$ -- all that Theorem~\ref{thm:main} needs -- follows from
bounding $\lambda(n)$ by the Dedekind number $D(n)$ (every MLS is an
up-set) and Kleitman's theorem on Dedekind
numbers~\cite{Kleitman1969}, refined by Korshunov~\cite{Korshunov1981}.
The total number of MLS on $[n]$ is the number of self-dual monotone
Boolean functions of $n$ variables (the Hosten--Morris numbers
$4,12,81,2646,\dots$ for $n=3,4,5,6$); removing the $n$ stars leaves
the kernel-free count $\lambda(n)=1,8,76,2640,\dots$ used throughout,
tabulated for small $n$ by Brouwer, Mills, Mills and
Verbeek~\cite{BMMV2013} and
Pawelski--Szepietowski~\cite{PawelskiSzepietowski2023}.
Theorem~\ref{thm:main} uses only the crude Dedekind upper bound above,
not any finer asymptotics of $\lambda(n)$. On
the ``increasing-weight'' side -- complementary to the decreasing
fugacity~\eqref{eq:weight} studied here -- Flower and
Mycroft~\cite{FlowerMycroft2025} recently identified the canonical
maximal left-compressed intersecting families as the unique maximisers
of any increasing weight function on a uniform layer; see also the
typical-structure
results~\cite{BaloghDasDelcourtLiuSharifzadeh2015,FranklKupavskii2018,
BaloghGarciaLiWagner2021} for the unweighted ($w\equiv1$) count of
intersecting families.
Very recently Bai, Gu and Zang~\cite{BaiGuZang2026} extended those
counts to the non-uniform bounded-size setting: for
$n\ge2k+2+2\sqrt{k\log k}$ and $k\to\infty$, the number of intersecting
families on $[n]$ all of whose members have size at most $k$ is
$(n+o(1))\,2^{\sum_{i=1}^{k}\binom{n-1}{i-1}}$, so typical bounded-size
intersecting families are trivial. Those are leading-order counting
statements for the \emph{number} of families, valid when $n$ is large
compared with the maximal member size; the present paper allows all of
$V_n$ (no bound on member sizes, so the case $k=n$ lies far outside
that regime) and proves a \emph{weighted} statement -- an additive
error term for the partition function, together with the exact list of
near-extremal families (Corollary~\ref{cor:additive},
Theorem~\ref{thm:second}). Neither statement implies the other.

\subsection{Structure of the paper}

Section~\ref{sec:prelim} sets up the correspondence between independent
sets of $D_n$ and maximal linked systems. Section~\ref{sec:main-proof}
proves the extremal Theorem~\ref{thm:main-mu} (main result C) and
extracts the rigidity Lemma~\ref{lem:rigidity}.
Section~\ref{sec:application} proves the leading-exponent
Theorem~\ref{thm:main}. Section~\ref{sec:defect} is the core: after a
one-defect family that supplies the extremisers, it proves the
second-level gap Theorem~\ref{thm:second} (main result A), the
$\mu_p$-version Corollary~\ref{cor:second-mu}, the exact prefactor
Theorem~\ref{thm:prefactor} and the additive exponent
Corollary~\ref{cor:additive} (main result B).
Appendix~\ref{app:logic} is a short illustrative remark, without any claim of proof,
on the origin of the weight~\eqref{eq:weight} in three-valued logic.
Appendix~\ref{sec:data} contains exact numerical data: the
decomposition $W(n)=Z_\cap(n)+R(n)$ for $n\le5$ (Table~\ref{tab:data})
and the top of the $\Lambda$-spectrum of kernel-free MLS for $n\le6$
(Table~\ref{tab:spectrum}), all reproducible by the script in
Appendix~\ref{app:code}. Appendix~\ref{sec:supplementary} records a
parity obstruction showing that a naive layer-by-layer bound on the
suppression exponent eventually fails, together with a cross-layer
constraint; these supplementary observations are not used in the main
results. Appendix~\ref{app:figs} contains supplementary illustrations.

\section{Maximal linked systems and complementary pairs}\label{sec:prelim}

The \emph{kernel} of a family is
$\ker(\mathcal F):=\bigcap_{S\in\mathcal F}S$, with the convention
$\ker(\varnothing)=[n]$. A family is \emph{kernel-bearing} (or
\emph{trivial}) if $\ker(\mathcal F)\neq\varnothing$ and
\emph{kernel-free} (\emph{non-trivial}) otherwise.

A \emph{maximal linked system} (MLS) on $[n]$ is a maximal intersecting
family of nonempty subsets of $[n]$. Equivalently, an MLS is an
intersecting family $M$ that contains exactly one set from each
complementary pair $(S,[n]\setminus S)$,
$\varnothing\neq S\subsetneq[n]$, together with $[n]$; hence
$|M|=2^{n-1}$. Indeed, $M$ cannot contain both members of a pair, as
they are disjoint; and if it contained neither $S$ nor $S^{c}$, then by
maximality some $A\in M$ would be disjoint from $S$, i.e.\
$A\subseteq S^{c}$, whence every $B\in M$ meets $S^{c}$ (it meets
$A$), so $M\cup\{S^{c}\}$ is intersecting, contradicting maximality.
In particular every MLS is an up-set: if $S\in M$ and
$S\subseteq T$, then $T$ meets every member of $M$, so $T\in M$ by
maximality. Via the indicator of membership, MLS on $[n]$ are
therefore in bijection with self-dual monotone Boolean functions of
$n$ variables (the total count, including the $n$ stars, is the
Hosten--Morris number, cf.~\cite{PawelskiSzepietowski2023}).

\begin{lemma}\label{lem:stars}
The kernel-bearing MLS on $[n]$ are exactly the $n$ stars $\Star(i)$.
\end{lemma}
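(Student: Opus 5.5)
The proof has two directions: first that each star is a kernel-bearing maximal linked system, then that every kernel-bearing maximal linked system is a star.

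\emph{Stars are kernel-bearing MLS.} Fix $i\in[n]$. Any two members of $\Star(i)$ contain $i$, so $\Star(i)$ is intersecting and $i\in\ker\Star(i)$. For maximality, take any nonempty $S\notin\Star(i)$, so $i\notin S$. Then $\{i\}\in\Star(i)$ and $\{i\}\cap S=\varnothing$, so $\Star(i)\cup\{S\}$ is not intersecting. Hence $\Star(i)$ is a maximal linked system. It also fits the complementary-pair description: for each pair $(S,[n]\setminus S)$ exactly one member contains $i$.

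\emph{Kernel-bearing MLS are stars.} Let $M$ be an MLS with $\ker M\neq\varnothing$, and pick $i\in\ker M$. Every member of $M$ contains $i$, so $M\subseteq\Star(i)$. Since $\Star(i)$ is intersecting and $M$ is maximal intersecting, this forces $M=\Star(i)$. The same conclusion follows by counting: $|M|=2^{n-1}=|\Star(i)|$.

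\emph{Distinctness.} For $i\neq j$ we have $\{i\}\in\Star(i)\setminus\Star(j)$, so the stars are pairwise distinct and there are exactly $n$ of them. One small point: $\ker\Star(i)=\{i\}$, because $\{i\}$ is itself a member. So each star has a single kernel element, and the index $i$ chosen in the second step is uniquely determined.

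No step here is a genuine obstacle. The only care needed is the convention $\ker\varnothing=[n]$, which does not matter because an MLS is nonempty (it contains $[n]$).
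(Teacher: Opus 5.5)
Your proof is correct and is essentially the paper's argument. The paper also combines $i\in\ker M$ with maximality to get $M=\Star(i)$, phrasing it as $\Star(i)\subseteq M$ by maximality and then $M\subseteq\Star(i)$ via $\{i\}\in M$, whereas you read $M\subseteq\Star(i)$ directly off the kernel and close with maximality; your explicit checks of the converse direction and of distinctness fill in what the paper states in one line.
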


\begin{proof}
If $i\in\ker(M)$, then every $S\ni i$ meets every member of $M$, so by
maximality $\Star(i)\subseteq M$; and any $S\in M$ must meet
$\{i\}\in\Star(i)\subseteq M$, so $i\in S$. Hence $M=\Star(i)$.
Conversely each $\Star(i)$ is an MLS with kernel $\{i\}$.
\end{proof}

\begin{lemma}[Reduction]\label{lem:reduction}
For all $n\ge3$ and $p\in(0,1)$,
\[
M_2(n,p)\;=\;\max\{\mu_p(M):M\ \text{is a kernel-free MLS on }[n]\}.
\]
\end{lemma}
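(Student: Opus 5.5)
The plan is to prove the two inequalities separately. The direction ``$\ge$'' is immediate. The direction ``$\le$'' follows by extending an arbitrary non-trivial intersecting family to a maximal one, using two facts: $\mu_p$ is additive with strictly positive summands, and the kernel can only shrink when members are added.

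\emph{Lower bound.} A kernel-free MLS $M$ on $[n]$ is by definition an intersecting family of nonempty subsets with $\ker M=\varnothing$. So it is one of the competitors in the definition of $M_2(n,p)$, and $\mu_p(M)\le M_2(n,p)$. We must also check that the right-hand maximum is over a nonempty finite set. For this we exhibit one kernel-free MLS when $n\ge3$, say the one-flip star $\mathcal F_1^{*}$ of \eqref{eq:oneflip}.
\begin{itemize}[leftmargin=*]
\item It is intersecting: two members containing $1$ meet at $1$, and a member $S\ni1$ other than $\{1\}$ meets $[n]\setminus\{1\}$ because $|S|\ge2$.
\item It contains exactly one set from each complementary pair together with $[n]$, so it is an MLS by the characterisation in \S\ref{sec:prelim}. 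Concretely, the pair $\{1\},[n]\setminus\{1\}$ contributes $[n]\setminus\{1\}$, and every other pair contributes its member containing $1$.
\item Its kernel is empty, since $\{1,2\}\cap[n]\setminus\{1\}\cap\{1,3\}$ has no element in common (using $n\ge3$).
\end{itemize}
The set of MLS is finite, so the maximum exists.

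\emph{Upper bound.} Let $\mathcal F\subseteq V_n$ be any non-trivial intersecting family. It is nonempty, since $\ker\varnothing=[n]\ne\varnothing$. Because $V_n$ is finite, we can greedily add sets of $V_n$ that meet every current member until no further set can be added. This produces an MLS $M\supseteq\mathcal F$. Since $\mathcal F\subseteq M$, we have $\ker M=\bigcap_{S\in M}S\subseteq\bigcap_{S\in\mathcal F}S=\ker\mathcal F=\varnothing$, so $M$ is kernel-free. Finally, \eqref{eq:mu-def} expresses $\mu_p$ as a sum of the positive terms $p^{|S|}q^{n-|S|}$ over members, hence $\mu_p(\mathcal F)\le\mu_p(M)$. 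Taking the maximum over $\mathcal F$ gives $M_2(n,p)\le\max_M\mu_p(M)$.

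There is no real obstacle here. The only points needing care are the convention $\ker\varnothing=[n]$, which excludes the empty family, and the existence of at least one kernel-free MLS for $n\ge3$, which the one-flip star supplies. Note that positivity of the weights, not any property of $p\le\tfrac12$, is what makes the reduction valid for all $p\in(0,1)$.
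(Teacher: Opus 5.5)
Your proof is correct and follows the paper's argument. You extend a non-trivial intersecting family to an MLS, note $\ker M\subseteq\ker\mathcal F=\varnothing$, use that $\mu_p$ is monotone under adding members, and observe conversely that every kernel-free MLS is itself a competitor. Your explicit check that $\mathcal F_1^{*}$ is a kernel-free MLS for $n\ge3$, so that the maximum is over a nonempty set, is a small point of care the paper leaves implicit.
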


\begin{proof}
Every non-trivial intersecting $\mathcal F$ extends to an MLS $M$, and
$\ker(M)\subseteq\ker(\mathcal F)=\varnothing$, so $M$ is kernel-free;
since $\mu_p$ is monotone under adding sets,
$\mu_p(\mathcal F)\le\mu_p(M)$. Conversely every kernel-free MLS is a
non-trivial intersecting family.
\end{proof}

\begin{lemma}[Singleton lemma]\label{lem:singleton}
Let $M$ be a kernel-free MLS on $[n]$. Then $\{i\}\notin M$ for every
$i\in[n]$, and consequently $[n]\setminus\{i\}\in M$ for every
$i\in[n]$.
\end{lemma}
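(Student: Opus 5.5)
The argument uses Lemma~\ref{lem:stars} together with the complementary-pair characterisation of an MLS recorded just before it.

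\emph{First claim.} Fix $i\in[n]$ and suppose, for contradiction, that $\{i\}\in M$. Every member of $M$ must meet $\{i\}$, since $M$ is intersecting. Hence every member of $M$ contains $i$, so $i\in\ker(M)$. This contradicts the assumption that $M$ is kernel-free. Equivalently, $\ker(M)\neq\varnothing$ would make $M$ one of the stars $\Star(i)$ by Lemma~\ref{lem:stars}, and a star is not kernel-free. Either way we get $\{i\}\notin M$.

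\emph{Second claim.} Use the fact that an MLS contains exactly one set from each complementary pair $(S,[n]\setminus S)$ with $\varnothing\neq S\subsetneq[n]$. Since $n\ge 3$ (indeed $n\ge2$ suffices), the singleton $\{i\}$ is a proper nonempty subset of $[n]$. So $\bigl(\{i\},[n]\setminus\{i\}\bigr)$ is a genuine complementary pair, and $[n]\setminus\{i\}$ is nonempty. Since $\{i\}\notin M$, the other member of the pair must lie in $M$, that is, $[n]\setminus\{i\}\in M$.

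Nothing here is a real obstacle. The only point needing care is that $\{i\}$ is a proper nonempty subset, so the pair characterisation applies; this holds because $n\ge2$.
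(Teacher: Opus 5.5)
Your proof is correct and follows the paper's argument essentially verbatim: a singleton $\{i\}\in M$ forces $i\in\ker(M)$, and the complementary-pair characterisation then puts $[n]\setminus\{i\}$ in $M$. The aside via Lemma~\ref{lem:stars} and the check that $(\{i\},[n]\setminus\{i\})$ is a genuine pair are harmless additions, not a different route.
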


\begin{proof}
If $\{i\}\in M$, then every $S\in M$ meets $\{i\}$, forcing
$i\in\ker(M)$ -- a contradiction, since $M$ is kernel-free. Hence
$\{i\}\notin M$ for all $i$. From each complementary pair
$(\{i\},[n]\setminus\{i\})$ exactly one member lies in $M$, so
$[n]\setminus\{i\}\in M$.
\end{proof}

Finally, we record the value at the star: for any $a$ and $p$,
\begin{equation}\label{eq:star-measure}
\mu_p(\Star(a))=\sum_{T\subseteq[n]\setminus\{a\}}p^{1+|T|}q^{\,n-1-|T|}
=p\,(p+q)^{n-1}=p,
\end{equation}
which is the biased Erd\H os--Ko--Rado
value (see~\cite{Friedgut2008,Filmus2017}):
$\mu_p(\mathcal F)\le p$ for every
intersecting $\mathcal F$ and $p\le\tfrac12$, with equality only for
stars when $p<\tfrac12$. (For non-monotone families the inequality
follows by passing to the up-closure, which preserves the intersecting
property and does not decrease $\mu_p$.)
For the non-uniform product measure $\mu_{\mathbf p}$ the analogous
extremality of stars -- the Suda--Tanaka--Tokushige conjecture -- has
very recently been confirmed, in a stronger $t$-intersecting form, by
Wu and Feng~\cite{WuFeng2026stt}.

\section{Proof of Theorem~\ref{thm:main-mu}}\label{sec:main-proof}

By Lemma~\ref{lem:reduction} it suffices to prove: for every
kernel-free MLS $M$ on $[n]$, $n\ge 3$, and $0<p\le\tfrac12$,
\begin{equation}\label{eq:goal}
\mu_p(M)\;\le\;p-pq^{\,n-1}+qp^{\,n-1},
\end{equation}
together with the classification of equality for $p<\tfrac12$.

\begin{proof}[Proof of the bound \eqref{eq:goal}]
Fix $a\in[n]$ and compare $M$ with $\Star(a)$ pair by pair on the
complementary partition of $2^{[n]}\setminus\{\varnothing,[n]\}$; the
common member $[n]$ contributes $p^n$ to both measures.

\emph{Singleton pairs.}
For the pair $(\{a\},[n]\setminus\{a\})$: $\Star(a)$ contains $\{a\}$,
of measure $pq^{n-1}$, while by Lemma~\ref{lem:singleton} $M$ contains
$[n]\setminus\{a\}$, of measure $p^{n-1}q$. Deficit:
$pq^{n-1}-p^{n-1}q\ (\ge 0$ for $p\le\tfrac12)$.
For each pair $(\{j\},[n]\setminus\{j\})$ with $j\neq a$: both families
contain $[n]\setminus\{j\}$ ($\Star(a)$ because
$a\in[n]\setminus\{j\}$, $M$ by Lemma~\ref{lem:singleton}). No
difference.

\emph{Pairs at layers $2\le k<n/2$.}
For a complementary pair $(S,[n]\setminus S)$ with $|S|=k$, the
$\mu_p$-contribution is $p^{n-k}q^{k}$ if the larger member is chosen
and $p^{k}q^{\,n-k}$ if the smaller one is; define the \emph{gain}
\[
h_k(p)\;:=\;p^{k}q^{\,n-k}-p^{\,n-k}q^{k}
\;=\;p^kq^k\bigl(q^{\,n-2k}-p^{\,n-2k}\bigr)\;>\;0
\quad\text{for }k<n/2,\ p<\tfrac12,
\]
and $h_k(\tfrac12)=0$. Within the pair, $\Star(a)$ chooses the smaller
member exactly when $a\in S$; hence the number of small members chosen
by $\Star(a)$ at layer $k$ is $\binom{n-1}{k-1}$. The small members
chosen by $M$ at layer $k$ form a $k$-uniform intersecting family, so
by the Erd\H os--Ko--Rado theorem~\cite{EKR1961} their number
$f_k:=|M\cap\binom{[n]}{k}|$ satisfies
\[
f_k\;\le\;\binom{n-1}{k-1},\qquad 2\le k<n/2 .
\]

\emph{Middle layer.}
If $n$ is even and $k=n/2$, both members of every pair have measure
$p^{n/2}q^{n/2}$; the choices of $M$ and $\Star(a)$ contribute equally.

\emph{Assembly.}
Summing the differences over all pairs,
\begin{equation}\label{eq:assembly}
\mu_p(\Star(a))-\mu_p(M)
\;=\;\bigl(pq^{\,n-1}-qp^{\,n-1}\bigr)
+\sum_{k=2}^{\lceil n/2\rceil-1}
\Bigl(\binom{n-1}{k-1}-f_k\Bigr)h_k(p)
\;\ge\;pq^{\,n-1}-qp^{\,n-1},
\end{equation}
and \eqref{eq:goal} follows from \eqref{eq:star-measure}.

\emph{Tightness.}
The one-flip star $\mathcal F_a^{*}$ is an MLS with
$\ker(\mathcal F_a^{*})=\varnothing$; it agrees with $\Star(a)$ on every
non-singleton pair and replaces $\{a\}$ by $[n]\setminus\{a\}$, so
$\mu_p(\mathcal F_a^{*})=p-pq^{\,n-1}+qp^{\,n-1}$.
\end{proof}

\begin{proof}[Proof of the uniqueness claims]
First note that, since every nonempty set has strictly positive
$\mu_p$-weight, a non-trivial intersecting family attaining $M_2(n,p)$
must itself be a kernel-free MLS: otherwise its kernel-free MLS
extension provided by Lemma~\ref{lem:reduction} would have strictly
larger measure. It therefore suffices to classify the extremal
kernel-free MLS. Let $0<p<\tfrac12$ and let $M$ be a kernel-free MLS
attaining equality in~\eqref{eq:goal}.

\emph{Case $n\ge 5$.}
Since $h_k(p)>0$ for $2\le k<n/2$, equality in \eqref{eq:assembly}
forces $f_k=\binom{n-1}{k-1}$ for every such $k$; in particular
$f_2=n-1$. As $n>4$, the uniqueness part of the Erd\H os--Ko--Rado
theorem at $k=2$ implies that the $2$-sets of $M$ are exactly
$\{\{b,j\}:j\neq b\}$ for some $b\in[n]$ (for $k=2$ this is
elementary: an intersecting family of $2$-sets of the maximum size
$n-1$ is a star, the only other maximal such family being the triangle,
of size $3<n-1$). Now let $S\in M$ with
$b\notin S$. Then $S$ meets every $\{b,j\}$, so $j\in S$ for all
$j\neq b$, i.e.\ $S\supseteq[n]\setminus\{b\}$, whence
$S=[n]\setminus\{b\}$. Thus every member of $M$ either contains $b$ or
equals $[n]\setminus\{b\}$. By the complementarity property, for each
pair $(S,[n]\setminus S)$ other than the singleton pair at $b$, the
member containing $b$ lies in $M$; and $\{b\}\notin M$ forces
$[n]\setminus\{b\}\in M$. Hence $M=\mathcal F_b^{*}$.

\emph{Case $n=4$.}
By Lemma~\ref{lem:singleton}, $M$ contains the four $3$-sets and
$[4]$, and chooses one $2$-set from each of the three complementary
pairs $\{12,34\}$, $\{13,24\}$, $\{14,23\}$. Any two non-complementary
$2$-subsets of $[4]$ intersect, and every $2$-set meets every $3$-set
($2+3>4$), so \emph{all eight} transversals yield intersecting
families, each maximal, and each kernel-free since already the four
$3$-sets have empty intersection. All eight have the same measure
$4p^3q+p^4+3p^2q^2$, which equals the right-hand side
of~\eqref{eq:goal} at $n=4$ (both equal
$p^2(1+q+q^2)+qp^3$). The eight transversals split into the four with a
common element (giving the four $\mathcal F_a^{*}$) and the four
\emph{triangles} $\binom{B}{2}$, $B\in\binom{[4]}{3}$ (giving
the $\mathcal T_B$). Here the layer range $2\le k<n/2$ is empty and the
middle layer is $\mu_p$-neutral, which is why the
Erd\H os--Ko--Rado rigidity is unavailable and the extremal set is
larger.

\emph{Case $n=3$.}
The only kernel-free MLS on $[3]$ is
$\{\{1,2\},\{1,3\},\{2,3\},\{1,2,3\}\}$, which equals
$\mathcal F_a^{*}$ for every $a$.

\emph{Case $p=\tfrac12$.}
All gains $h_k(\tfrac12)$ and the singleton deficit vanish, so
\eqref{eq:assembly} gives $\mu_{1/2}(M)=\tfrac12$ for \emph{every}
kernel-free MLS; indeed $\mu_{1/2}(M)=|M|/2^{n}=\tfrac12$. By the
positivity argument above, no proper subfamily of an MLS attains the
bound, so these are the only extremal non-trivial intersecting
families.
\end{proof}

\begin{remark}\label{rem:ingredients}
The proof uses only the singleton lemma and the Erd\H os--Ko--Rado
theorem with its uniqueness part at $k=2$. The identification
$\Lambda(\mathcal F)=3^n\mu_{1/3}(\mathcal F)$ of
Section~\ref{sec:application} is not needed; it explains, however, why
the doubly exponential enumeration below is governed exactly by this
extremal problem.
\end{remark}

\begin{remark}\label{rem:AKball}
The triangle systems of Theorem~\ref{thm:main-mu}(ii) are copies of
the Ahlswede--Khachatrian ball:
$\mathcal T_B=\{F\subseteq[4]:|F\cap B|\ge 2\}$, i.e.\ the
Brace--Daykin family $\mathrm{BD}_2$ with window $B$. More generally,
for $1\le i\le\lfloor(n-1)/2\rfloor$ the balls
$\mathcal S_i=\{A:|A\cap[1+2i]|\ge 1+i\}$ are
themselves kernel-free MLS (each complementary pair contributes
exactly one member, by majority in the odd window), so
Theorem~\ref{thm:main-mu} contains the full comparison of the two
candidate types in the Bey--Engel
bound~(\cite{BeyEngel2000},~\cite[Theorem~2.5]{HouHu2026}): for
$0<p<\tfrac12$ the Hilton--Milner-type candidate is strictly larger
for every $n\ge 5$; equality with $\mathcal S_1$ occurs only for
$n\in\{3,4\}$
(at $n=4$ the identity
$\mu_p(\mathcal S_1)=p-pq^{3}+qp^{3}$ holds for all $p$), and at
$p=\tfrac12$ all kernel-free MLS have the same measure.
\end{remark}

\section{The weighted independent-set polynomial: proof of Theorem~\ref{thm:main}}
\label{sec:application}

\subsection{The kernel contribution}

\begin{proposition}\label{prop:Zcap}
The kernel-bearing contribution to $W(n)$ satisfies
\begin{equation*}
Z_\cap(n)\;:=\;\sum_{\substack{\mathcal F\in\Intn\\\ker(\mathcal F)\neq\varnothing}}
\prod_{S\in\mathcal F}w(S)
\;=\;\sum_{j=1}^{n}(-1)^{j+1}\binom{n}{j}\,2^{3^{n-j}}
\;\sim\; n\cdot 2^{3^{n-1}}.
\end{equation*}
\end{proposition}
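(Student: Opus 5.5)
The plan is inclusion--exclusion over the kernel. A family $\mathcal F\in\Intn$ has $\ker\mathcal F\neq\varnothing$ exactly when $\mathcal F\subseteq\Star(i)$ for some $i$, where for $\mathcal F=\varnothing$ we use the convention $\ker\varnothing=[n]$, so the empty family is counted once. Define $E_i:=\{\mathcal F\in\Intn:\mathcal F\subseteq\Star(i)\}$. Then $Z_\cap(n)$ is the $w$-weight of $\bigcup_iE_i$, and inclusion--exclusion gives
\[
Z_\cap(n)=\sum_{\varnothing\ne J\subseteq[n]}(-1)^{|J|+1}\,Z_J,
\qquad Z_J:=\sum_{\mathcal F\in\bigcap_{i\in J}E_i}\prod_{S\in\mathcal F}w(S).
\]
We have $\bigcap_{i\in J}E_i$ equal to the set of families all of whose members contain $J$. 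Any such family is automatically intersecting, since $J\neq\varnothing$. So $Z_J$ is the sum over all subfamilies of $\{S:S\supseteq J\}$, which factorises:
\[
Z_J=\prod_{S\supseteq J}\bigl(1+w(S)\bigr)=2^{\sum_{S\supseteq J}2^{n-|S|}}.
\]

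Next, evaluate the exponent by writing $S=J\cup T$ with $T\subseteq[n]\setminus J$. With $j=|J|$,
\[
\sum_{T}2^{\,n-j-|T|}=\sum_{t=0}^{n-j}\binom{n-j}{t}2^{\,n-j-t}=3^{\,n-j}.
\]
Hence $Z_J=2^{3^{n-j}}$ depends only on $j$, and grouping by $j$ gives the exact formula $\sum_{j=1}^n(-1)^{j+1}\binom nj2^{3^{n-j}}$.

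For the asymptotics, the $j=1$ term is $n\,2^{3^{n-1}}$, and I bound the rest by its absolute value. Since $3^{n-j}\le 3^{n-2}$ for $j\ge2$,
\[
\sum_{j\ge2}\binom nj2^{3^{n-j}}\le 2^n\,2^{3^{n-2}},
\]
and therefore
\[
\frac{2^n\,2^{3^{n-2}}}{n\,2^{3^{n-1}}}\le 2^{\,n-2\cdot3^{n-2}}\to0.
\]
This gives $Z_\cap(n)\sim n\,2^{3^{n-1}}$.

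The only delicate point is bookkeeping. I need to check that "kernel-bearing" coincides with "contained in some star", including the empty family via the convention, and that the $w$-weights satisfy $1+w(S)=2^{2^{n-|S|}}$ so that the product collapses. Everything else is routine.
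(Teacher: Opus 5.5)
Your proposal is correct and follows the paper's proof essentially step for step: inclusion--exclusion over the events $i\in\ker\mathcal F$ (your $E_i$, with the empty family handled by the convention $\ker\varnothing=[n]$), the factorisation $\prod_{S\supseteq J}(1+w(S))=2^{3^{n-|J|}}$ via the binomial identity, and the tail bound $\sum_{j\ge2}\binom nj2^{3^{n-j}}\le 2^n\,2^{3^{n-2}}=o\bigl(n\,2^{3^{n-1}}\bigr)$. The paper additionally records the Bonferroni lower bound $Z_\cap(n)\ge(n-1)\,2^{3^{n-1}}$ for later use, but the statement itself does not need it.
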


\begin{proof}
For nonempty $J\subseteq[n]$, every subfamily of
$\{S\subseteq[n]:J\subseteq S,\;S\neq\varnothing\}$ is intersecting,
and its weighted count is
\[
\prod_{S\supseteq J}(1+w(S))
\;=\;\prod_{S\supseteq J}2^{2^{n-|S|}}
\;=\;2^{\sum_{k=|J|}^{n}\binom{n-|J|}{k-|J|}\,2^{n-k}}
\;=\;2^{3^{n-|J|}},
\]
the last equality by the binomial identity
$\sum_{t=0}^{m}\binom{m}{t}2^{m-t}=3^m$ with $m=n-|J|$.
Inclusion--exclusion over the events
$\{\mathcal F:\ker(\mathcal F)\ni i\}$ for $i\in[n]$ gives the
formula (the empty family, with $\ker(\varnothing)=[n]$ by convention,
lies in every event and is counted once by the alternating sum).
The $j=1$ term dominates: the $j=2$ term satisfies
$\binom{n}{2}\cdot 2^{3^{n-2}}/(n\cdot 2^{3^{n-1}})
=\tfrac{n-1}{2}\cdot 2^{-2\cdot 3^{n-2}}\to 0$. Indeed the whole
tail is negligible: $\sum_{j=2}^{n}\binom nj 2^{3^{n-j}}\le
2^{n}\,2^{3^{n-2}}=o\!\left(n\,2^{3^{n-1}}\right)$, whence
$Z_\cap(n)\sim n\cdot2^{3^{n-1}}$.
Moreover, by the Bonferroni inequality for this weighted union,
\begin{equation}\label{eq:bonferroni}
Z_\cap(n)\;\ge\;n\cdot 2^{3^{n-1}}-\binom{n}{2}\,2^{3^{n-2}}
\;\ge\;(n-1)\cdot 2^{3^{n-1}}
\qquad(n\ge 2),
\end{equation}
the last step because $\binom{n}{2}\le 2^{2\cdot 3^{n-2}}$ for all
$n\ge 2$.
\end{proof}

\subsection{The weight exponent and the count of maximal linked systems}

The \emph{weight exponent} of an intersecting family is
\begin{equation}\label{eq:Lambda}
\Lambda(\mathcal F)\;:=\;\sum_{S\in\mathcal F}2^{n-|S|}
\;=\;3^n\,\mu_{1/3}(\mathcal F),
\end{equation}
the identity holding because
$3^n(1/3)^{|S|}(2/3)^{n-|S|}=2^{n-|S|}$. Since
$w(S)\le 2^{2^{n-|S|}}$, the product weight of any family is at most
$2^{\Lambda(\mathcal F)}$. Corollary~\ref{cor:qary} with $Q=3$ (i.e.\
Theorem~\ref{thm:main-mu} at $p=\tfrac13$) gives the sharp bound for
the kernel-free maximum:
\begin{equation}\label{eq:M-star}
M_n^{*}\;:=\;\max\{\Lambda(M):M\ \text{kernel-free MLS on }[n]\}
\;=\;3^{n-1}-2^{n-1}+2 .
\end{equation}

Let $\lambda(n)$ denote the number of kernel-free MLS on $[n]$. By
Lemma~\ref{lem:stars}, $\lambda(n)$ equals the total number of MLS
minus $n$.

\begin{lemma}\label{lem:MLS-count}
$\log_2\lambda(n)\;\le\;(1+o(1))\binom{n}{\lfloor n/2\rfloor}
\;=\;O\!\bigl(2^{n}n^{-1/2}\bigr)\;=\;o\!\bigl(2^{n-1}\bigr).$
\end{lemma}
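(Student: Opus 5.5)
The plan is to bound $\lambda(n)$ by a count of monotone Boolean functions and then quote the classical asymptotics for those. Every kernel-free MLS $M$ is an up-set in $2^{[n]}$, as shown in \S\ref{sec:prelim}. Distinct MLS have distinct indicator functions, so the map $M\mapsto\mathbf 1_M$ is an injection from kernel-free MLS into monotone Boolean functions of $n$ variables. Hence
\[
\lambda(n)\;\le\;D(n),
\]
where $D(n)$ is the Dedekind number. One could sharpen this by using self-duality. An MLS is determined by its restriction to sets of size $<n/2$ together with a choice in the middle layer when $n$ is even. This sharpening is not needed, however, since only the leading order of $\log_2\lambda(n)$ matters.

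The second step is to invoke Kleitman's theorem~\cite{Kleitman1969}, in the refined form of Korshunov~\cite{Korshunov1981}:
\[
\log_2 D(n)\;=\;(1+o(1))\binom{n}{\lfloor n/2\rfloor}.
\]
This gives the first inequality. If a self-contained argument is preferred, there is a weaker but sufficient route. Every up-set is determined by its antichain of minimal elements. Hansel's chain-partition argument, or simply Sperner's theorem combined with a symmetric chain decomposition, gives $\log_2 D(n)=O\bigl(\binom{n}{\lfloor n/2\rfloor}\bigr)$. That bound already yields the last two relations in the statement, and only those are used in Theorem~\ref{thm:main}.

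The final step is Stirling's formula, $\binom{n}{\lfloor n/2\rfloor}\sim 2^n\sqrt{2/(\pi n)}=O(2^n n^{-1/2})$. Since $n^{-1/2}\to0$, this is $o(2^{n-1})$.

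There is no real obstacle in this argument. The only point needing care is the injectivity into monotone functions, which rests on the up-set property already established. Everything else is citation of the Dedekind asymptotics. If anything is delicate, it is making sure the cited result gives $\log_2 D(n)$ itself rather than $D(n)$ up to a $2^{(1+o(1))\cdot}$ factor, and Kleitman's form is exactly the former.
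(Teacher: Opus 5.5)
Your proof is correct and is essentially the paper's: $\lambda(n)\le D(n)$ because every MLS is an up-set (the paper phrases this through the antichain of minimal elements rather than monotone Boolean functions, which is the same injection), and then Kleitman--Korshunov together with $\binom{n}{\lfloor n/2\rfloor}=\Theta(2^n n^{-1/2})$ finishes it. One caveat on your optional fallback: Sperner's theorem with an arbitrary symmetric chain decomposition gives only $\log_2 D(n)=O\bigl(\binom{n}{\lfloor n/2\rfloor}\log n\bigr)$ (still $o(2^{n-1})$, which is all the later theorems use), and the clean $O\bigl(\binom{n}{\lfloor n/2\rfloor}\bigr)$ needs Hansel's specific chain lemma, $D(n)\le 3^{\binom{n}{\lfloor n/2\rfloor}}$.
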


\begin{proof}
An MLS, being an up-set, is determined by its set of minimal elements,
which is an antichain in $2^{[n]}$; hence the number of MLS is at most
the number of antichains, i.e.\ the Dedekind number $D(n)$. By
Kleitman's theorem~\cite{Kleitman1969}
(refined by Korshunov~\cite{Korshunov1981}),
$\log_2 D(n)=(1+o(1))\binom{n}{\lfloor n/2\rfloor}$, and
$\binom{n}{\lfloor n/2\rfloor}=\Theta(2^n n^{-1/2})=o(2^{n-1})$.
\end{proof}

\begin{remark}\label{rem:MLS-exact}
Much more precise information is available. MLS on $[n]$ correspond
bijectively to self-dual monotone Boolean functions of $n$ variables;
their exact numbers for small $n$ were computed by Brouwer, Mills, Mills and
Verbeek~\cite{BMMV2013} (e.g.\ there are $81$ MLS on $[5]$,
of which $76$ are kernel-free), with recent extensions
in~\cite{PawelskiSzepietowski2023}. Only the crude upper bound of
Lemma~\ref{lem:MLS-count} is needed below.
\end{remark}

\subsection{Proof of Theorem~\ref{thm:main}}

\begin{proposition}[MLS reduction]\label{prop:reduction}
$R(n)\;\le\;\lambda(n)\cdot 2^{M_n^{*}}$.
\end{proposition}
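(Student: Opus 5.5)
Every kernel-free intersecting family is a subfamily of some kernel-free MLS, so we cover $R(n)$ by a union bound over those MLS and then bound each term by its weight exponent.

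\emph{Step 1 (covering).} Let $\mathcal F$ be a kernel-free intersecting family contributing to $R(n)$. It extends to an MLS $M$, and $\ker(M)\subseteq\ker(\mathcal F)=\varnothing$, so $M$ is kernel-free; this is the argument of Lemma~\ref{lem:reduction}. Choose one such $M(\mathcal F)$ for each $\mathcal F$. Then
\[
R(n)\;\le\;\sum_{M\ \text{kernel-free MLS}}\ \sum_{\mathcal F\subseteq M}\prod_{S\in\mathcal F}w(S).
\]
This inequality only needs the map $\mathcal F\mapsto M(\mathcal F)$ to exist, not to be injective. The inner sum runs over all subfamilies of $M$, including kernel-bearing ones and the empty family. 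Since every term is nonnegative, this overcount only helps the upper bound.

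\emph{Step 2 (per-MLS weight).} For a fixed $M$, the sum over all subfamilies factorises as
\[
\sum_{\mathcal F\subseteq M}\prod_{S\in\mathcal F}w(S)=\prod_{S\in M}\bigl(1+w(S)\bigr).
\]
Using \eqref{eq:weight}, each factor is $1+w(S)=2^{2^{n-|S|}}$. The product is therefore exactly $2^{\Lambda(M)}$.

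\emph{Step 3 (extremal input).} Since $M$ is kernel-free, \eqref{eq:M-star} (Corollary~\ref{cor:qary} at $Q=3$) gives $\Lambda(M)\le M_n^{*}$. There are exactly $\lambda(n)$ kernel-free MLS, so summing over them yields $R(n)\le\lambda(n)\,2^{M_n^{*}}$.

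No step is a genuine obstacle; the substantive content is already in \eqref{eq:M-star}. The one point needing care is Step 1: we must confirm that passing to an MLS extension preserves kernel-freeness, which holds because kernels shrink as the family grows. We must also confirm that the empty family and all kernel-bearing subfamilies of $M$ enter only as nonnegative extra terms.
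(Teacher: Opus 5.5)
Your proposal is correct and follows the paper's proof: extend each kernel-free family to a kernel-free MLS, use the factorisation $\prod_{S\in M}(1+w(S))=2^{\Lambda(M)}\le 2^{M_n^{*}}$, and sum over the $\lambda(n)$ kernel-free MLS. You also make explicit that the overcounting is harmless because every term is nonnegative.
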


\begin{proof}
Every kernel-free intersecting $\mathcal F$ is contained in a
kernel-free MLS $M$ (Lemma~\ref{lem:reduction}), and the total weight
of all subfamilies of a fixed $M$ is
\[
\prod_{S\in M}(1+w(S))=2^{\Lambda(M)}\le 2^{M_n^{*}}.
\]
Sum over all kernel-free MLS.
\end{proof}

\begin{proposition}[Single-sector lower bound]\label{prop:one-sector}
Let $\Sigma_a:=\sum_{\mathcal F\subseteq\mathcal F_a^{*},\;\ker\mathcal
F=\varnothing}\prod_{S\in\mathcal F}w(S)$. Then
\[
\Sigma_a\;\ge\;\frac34\cdot2^{M_n^{*}}\Bigl[1-(n-1)\,
2^{-(2\cdot3^{n-2}-2^{n-1})}\Bigr],
\]
and, since trivially $\Sigma_a\le T=\tfrac34\cdot2^{M_n^{*}}$ (kernel-free
subfamilies containing $A$ are a subset of all subfamilies containing
$A$), both bounds together give
$\Sigma_a=\tfrac34\cdot2^{M_n^{*}}\bigl(1-o(1)\bigr)$.
\end{proposition}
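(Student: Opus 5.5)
The plan is to restrict attention to the subfamilies of $\mathcal F_a^{*}$ that contain the repair set $A:=[n]\setminus\{a\}$. Their total weight is $T$. From $T$ I would subtract, by a union bound over the $n-1$ possible kernel elements, the weight of those that still have a nonempty kernel.

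\emph{Step 1: the total $T$.} Among all subfamilies of $\mathcal F_a^{*}$, those containing $A$ have total weight
\[
T=w(A)\prod_{S\in\mathcal F_a^{*},\,S\neq A}(1+w(S))=\frac{w(A)}{1+w(A)}\,2^{\Lambda(\mathcal F_a^{*})}.
\]
Since $|A|=n-1$, we have $w(A)=2^{2}-1=3$ and $1+w(A)=4$. Also $\Lambda(\mathcal F_a^{*})=M_n^{*}$ by \eqref{eq:M-star} and the tightness part of Theorem~\ref{thm:main-mu} at $p=\tfrac13$. Hence $T=\tfrac34\cdot2^{M_n^{*}}$, which also gives the stated trivial upper bound $\Sigma_a\le T$.

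\emph{Step 2: identify the bad families.} Let $\mathcal F\subseteq\mathcal F_a^{*}$ contain $A$ and have $\ker\mathcal F\neq\varnothing$. Since $a\notin A$, the kernel contains some $j\neq a$. Then every member of $\mathcal F$ contains $j$, so $\mathcal F\subseteq\mathcal F_a^{*}\cap\Star(j)$. This intersection is $\{S:\{a,j\}\subseteq S\}\cup\{A\}$. So the bad families lie in the union over $j\in[n]\setminus\{a\}$ of the classes $\mathcal B_j:=\{\mathcal F\ni A:\ \mathcal F\subseteq\{S\supseteq\{a,j\}\}\cup\{A\}\}$. The subfamilies of $\mathcal F_a^{*}$ containing $A$ are all intersecting, because $\mathcal F_a^{*}$ is an MLS. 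Therefore $\Sigma_a$ is exactly $T$ minus the weight of $\bigcup_j\mathcal B_j$.

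\emph{Step 3: weigh each $\mathcal B_j$.} As in the proof of Proposition~\ref{prop:Zcap}, with $|J|=2$, the weight of $\mathcal B_j$ is
\[
w(A)\prod_{S\supseteq\{a,j\}}(1+w(S))=3\cdot2^{3^{n-2}}.
\]
Dividing by $T=3\cdot2^{M_n^{*}-2}$ gives the ratio $2^{3^{n-2}-M_n^{*}+2}$. Now $M_n^{*}-2-3^{n-2}=3^{n-1}-3^{n-2}-2^{n-1}=2\cdot3^{n-2}-2^{n-1}$. A union bound over the $n-1$ values of $j$ then gives
\[
\Sigma_a\ge T\bigl[1-(n-1)2^{-(2\cdot3^{n-2}-2^{n-1})}\bigr],
\]
which is the claimed bound. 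Since $2\cdot3^{n-2}-2^{n-1}\to\infty$ much faster than $\log_2 n$, the bracket is $1-o(1)$.

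I expect no real obstacle. The only point needing care is Step 2: one must check that containing $A$ forces the kernel to avoid $a$, and that $\mathcal F_a^{*}\cap\Star(j)$ is exactly the up-set of $\{a,j\}$ plus $A$. This uses the description \eqref{eq:oneflip} together with $j\in A$.
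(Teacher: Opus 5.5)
Your proof is correct and is essentially the paper's argument: the paper phrases the bad event for $j$ as $\mathcal F$ avoiding $\mathcal C_j=\{S\in\mathcal F_a^{*}:j\notin S\}$, which is exactly your condition $\mathcal F\subseteq\mathcal F_a^{*}\cap\Star(j)$, and both compute the same weight $\tfrac34\cdot2^{M_n^{*}-\Lambda(\mathcal C_j)}=3\cdot2^{3^{n-2}}$ before taking the union bound over $j\neq a$. You use one fact without stating it: every kernel-free subfamily of $\mathcal F_a^{*}$ must contain $A$, since otherwise $a$ lies in its kernel; this is needed only for the upper bound $\Sigma_a\le T$ and for your ``exactly'', not for the lower bound.
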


\begin{proof}
Fix $a=1$ without loss of generality. Write $A:=[n]\setminus\{1\}$,
the unique member of $\mathcal F_1^{*}$ not containing $1$; since
$w(A)=2^{2^{1}}-1=3$, a family $\mathcal F\subseteq\mathcal F_1^{*}$
with $\ker\mathcal F=\varnothing$ must contain $A$ (otherwise every
member contains $1$, so $1\in\ker\mathcal F$). Summing over all
subfamilies containing $A$, kernel-free or not,
\[
T\;:=\;\sum_{\substack{\mathcal F\subseteq\mathcal F_1^{*}\\A\in\mathcal
F}}\prod_{S\in\mathcal F}w(S)
\;=\;w(A)\!\!\prod_{S\in\mathcal F_1^{*}\setminus\{A\}}\!\!(1+w(S))
\;=\;\frac{w(A)}{1+w(A)}\cdot2^{\Lambda(\mathcal F_1^{*})}
\;=\;\frac34\cdot2^{M_n^{*}}.
\]
Every $\mathcal F$ counted in $T$ but not in $\Sigma_1$ has $A\in\mathcal
F$ yet $\ker\mathcal F\ne\varnothing$; since $A\notin\ker\mathcal F$
already, this means $j\in\ker\mathcal F$ for some $j\ne1$, i.e.\
$\mathcal F\cap\mathcal C_j=\varnothing$ for
$\mathcal C_j:=\{S\in\mathcal F_1^{*}: j\notin S\}$. A direct count
(sets containing $1$, not $j$: parametrised by
$T'\subseteq[n]\setminus\{1,j\}$, giving $\Lambda=2\cdot3^{n-2}$ minus
the excluded singleton $\{1\}$'s contribution $2^{n-1}$) gives
$\Lambda(\mathcal C_j)=2\cdot3^{n-2}-2^{n-1}$, and, exactly as for $T$
(the events ``$A\in\mathcal F$'' and ``$\mathcal F\cap\mathcal
C_j=\varnothing$'' involve disjoint parts of $\mathcal F_1^{*}$, since
$A\notin\mathcal C_j$),
\[
\sum_{\substack{\mathcal F\subseteq\mathcal F_1^{*}\\A\in\mathcal
F,\,\mathcal F\cap\mathcal C_j=\varnothing}}\prod_{S\in\mathcal
F}w(S)\;=\;\frac34\cdot2^{M_n^{*}-\Lambda(\mathcal C_j)}.
\]
Union-bounding over the $n-1$ choices of $j\ne1$ gives the stated
lower bound.
\end{proof}

\begin{proposition}[Extension to all $n$ sectors]\label{prop:n-sector}
\[
R(n)\;\ge\;\sum_{a=1}^{n}\Sigma_a-\binom n2 2^{3^{n-2}+4}
\;=\;\Bigl(\tfrac34-o(1)\Bigr)\,n\cdot2^{M_n^{*}}.
\]
\end{proposition}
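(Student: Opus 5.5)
The plan is a Bonferroni (two-term inclusion--exclusion) lower bound over the $n$ sectors, followed by an application of Proposition~\ref{prop:one-sector}.

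Let $\mathcal K_a$ denote the set of kernel-free intersecting families contained in $\mathcal F_a^{*}$. Every member of $\mathcal K_a$ is counted in $R(n)$. Since all weights are positive,
\[
R(n)\;\ge\;\sum_{\mathcal F\in\bigcup_a\mathcal K_a}\prod_{S\in\mathcal F}w(S)
\;\ge\;\sum_{a}\Sigma_a-\sum_{a<b}\;\sum_{\mathcal F\in\mathcal K_a\cap\mathcal K_b}\prod_{S\in\mathcal F}w(S).
\]
The second inequality is the pointwise statement that a family lying in $m\ge1$ of the sets $\mathcal K_a$ is counted $m-\binom m2\le1$ times on the right.

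For the overlap terms, a family in $\mathcal K_a\cap\mathcal K_b$ is a subfamily of $\mathcal F_a^{*}\cap\mathcal F_b^{*}$. Its weighted count is therefore at most $\prod_{S\in\mathcal F_a^{*}\cap\mathcal F_b^{*}}(1+w(S))=2^{\Lambda(\mathcal F_a^{*}\cap\mathcal F_b^{*})}$. The one real computation is $\Lambda(\mathcal F_a^{*}\cap\mathcal F_b^{*})$ for $a\ne b$. A set lies in both systems iff it contains both $a$ and $b$ (and is not a singleton, which is automatic), or it contains $a$ and equals $[n]\setminus\{b\}$, or it contains $b$ and equals $[n]\setminus\{a\}$. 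The sets containing $\{a,b\}$ contribute $\sum_{T\subseteq[n]\setminus\{a,b\}}2^{n-2-|T|}=3^{n-2}$. The two complements of singletons contribute $2$ each. This gives $3^{n-2}+4$, so each of the $\binom n2$ overlap terms is at most $2^{3^{n-2}+4}$, which yields the displayed inequality.

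For the asymptotic equality, insert Proposition~\ref{prop:one-sector}: $\sum_a\Sigma_a=\tfrac34 n\,2^{M_n^{*}}(1-o(1))$, since $(n-1)2^{-(2\cdot3^{n-2}-2^{n-1})}\to0$. It then remains to check that the error term is negligible:
\[
\binom n2 2^{3^{n-2}+4}\big/\bigl(n2^{M_n^{*}}\bigr)=\tfrac{n-1}{2}\,2^{3^{n-2}+4-3^{n-1}+2^{n-1}-2}=\tfrac{n-1}{2}\,2^{-2\cdot3^{n-2}+2^{n-1}+2}\to0,
\]
because $2\cdot3^{n-2}$ dominates $2^{n-1}$. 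The upper direction $R(n)\le(\tfrac34+o(1))n2^{M_n^{*}}$ is not claimed here; it belongs to Theorem~\ref{thm:prefactor}.

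The only step needing care is the intersection computation. The key point is that the one-flip complements $[n]\setminus\{a\}$ and $[n]\setminus\{b\}$ are counted once each via the other star's condition, and that no singleton slips in.
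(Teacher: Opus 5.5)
Your proof is correct and follows the paper's argument essentially step for step. It uses the same two-term Bonferroni bound over the $n$ sectors, the same intersection $\mathcal F_a^{*}\cap\mathcal F_b^{*}=\{S\supseteq\{a,b\}\}\cup\{[n]\setminus\{a\},[n]\setminus\{b\}\}$ with $\Lambda=3^{n-2}+4$, and the same appeal to Proposition~\ref{prop:one-sector}; your explicit error exponent $-2\cdot3^{n-2}+2^{n-1}+2$ simply spells out the paper's remark that $3^{n-2}\gg2^{n-1}$.
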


\begin{proof}
For $a\ne b$, $\mathcal F_a^{*}\cap\mathcal F_b^{*}=\{S:\{a,b\}\subseteq
S\}\cup\{[n]\setminus\{a\}\}\cup\{[n]\setminus\{b\}\}$ (the first part
since $S\in\mathcal F_a^{*}\cap\mathcal F_b^{*}$ with $S\ne[n]\setminus
a,[n]\setminus b$ forces $a,b\in S$; the two co-singletons lie in both families for $n\ge3$:
$[n]\setminus\{a\}\in\mathcal F_a^{*}$ by definition and, containing
$b$ and having size $n-1\ge2$, it lies in
$\Star(b)\setminus\{\{b\}\}\subseteq\mathcal F_b^{*}$; symmetrically
for $[n]\setminus\{b\}$), a disjoint union with
\[
\Lambda(\mathcal F_a^{*}\cap\mathcal F_b^{*})
=\underbrace{3^{n-2}}_{\{a,b\}\subseteq S}+\underbrace{2}_{[n]\setminus
a}+\underbrace{2}_{[n]\setminus b}=3^{n-2}+4.
\]
Any family counted more than once in
$\sum_a\Sigma_a$ is a kernel-free subfamily of some
$\mathcal F_a^{*}\cap\mathcal F_b^{*}$, so by inclusion--exclusion
(Bonferroni),
\[
R(n)\;\ge\;\sum_{a=1}^{n}\Sigma_a-\sum_{a<b}\;
\sum_{\mathcal F\subseteq\mathcal F_a^{*}\cap\mathcal F_b^{*}}
\prod_{S\in\mathcal F}w(S)
\;\ge\;n\Sigma_1-\binom n2\,2^{\Lambda(\mathcal F_a^{*}\cap\mathcal
F_b^{*})}
\;=\;n\Sigma_1-\binom n2\,2^{3^{n-2}+4}.
\]
Since $3^{n-2}\gg2^{n-1}$, the subtracted term is smaller than
$n\Sigma_1\sim\tfrac34 n\cdot2^{M_n^{*}}$ by a doubly-exponential
factor, giving the stated asymptotic form.
\end{proof}

\begin{remark}[Numerical sanity checks]\label{rem:sanity}
The quantities in Propositions~\ref{prop:one-sector}
and~\ref{prop:n-sector} were checked against exhaustive enumeration
for $n=4,5$ (Appendix~\ref{app:code}): at $n=4$,
$\Sigma_1=1\,568\,322$, so $\Sigma_1/T=0.99711\ldots$ against the
predicted lower bound $1-(n-1)2^{-(2\cdot3^{n-2}-2^{n-1})}
=0.99707\ldots$ (the small residual is expected, as the union bound
need not be exact); at $n=5$ the ratio is $1.00000\ldots$ to the
precision computed; $\Lambda(\mathcal F_a^{*}\cap\mathcal
F_b^{*})=13=3^{2}+4$ for all six pairs at $n=4$; and
$n\Sigma_1=6\,273\,288\le R(4)=13\,158\,882$
(Table~\ref{tab:data}). These computations play no role in the
proofs.
\end{remark}

\begin{proof}[Proof of Theorem~\ref{thm:main}]
Proposition~\ref{prop:Zcap} gives $Z_\cap(n)\sim n\cdot 2^{3^{n-1}}$ and,
by~\eqref{eq:bonferroni},
$Z_\cap(n)\ge(n-1)\cdot 2^{3^{n-1}}$. Combining
Proposition~\ref{prop:reduction} with~\eqref{eq:M-star},
\[
\frac{R(n)}{Z_\cap(n)}\;\le\;
\frac{\lambda(n)\cdot 2^{\,3^{n-1}-2^{n-1}+2}}
{(n-1)\cdot 2^{3^{n-1}}}
\;=\;\frac{\lambda(n)}{n-1}\cdot 2^{-(2^{n-1}-2)},
\]
an unconditional, non-asymptotic inequality for every $n\ge 3$,
proving~\eqref{eq:rate}. By Lemma~\ref{lem:MLS-count},
$\log_2\lambda(n)=O(2^n n^{-1/2})$, so the exponent of the right-hand
side is $-(2^{n-1}-2)+O(2^n n^{-1/2})=-2^{n-1}(1-O(n^{-1/2}))$. For the
lower bound, Proposition~\ref{prop:n-sector} and
$Z_\cap(n)\le n\cdot2^{3^{n-1}}(1+o(1))$ give
\[
\frac{R(n)}{Z_\cap(n)}\;\ge\;
\frac{\bigl(\tfrac34-o(1)\bigr)n\cdot2^{M_n^{*}}}{n\cdot2^{3^{n-1}}(1+o(1))}
\;=\;\Bigl(\tfrac34-o(1)\Bigr)2^{-(2^{n-1}-2)},
\]
proving~\eqref{eq:rate-lower}. Explicitly,
\[
\log_2\frac{Z_\cap(n)}{R(n)}\;\ge\;
2^{n-1}-2-\log_2\frac{\lambda(n)}{n-1},
\qquad
\log_2\frac{Z_\cap(n)}{R(n)}\;\le\;2^{n-1}-2+O(1),
\]
so both bounds agree with~\eqref{eq:sharp}.
\qedhere
\end{proof}

\section{The second level: near-extremal maximal linked
systems}\label{sec:defect}

Having identified the extremal kernel-free maximal linked systems
(the $n$ one-flip stars $\mathcal F_a^{*}$, uniquely so for $n\ge5$),
we now determine the \emph{second} level exactly: the largest value of
$\Lambda$, equivalently of $\mu_p$, over kernel-free MLS other than
these. This closes the gap that the leading-exponent estimate of
\S\ref{sec:main-result} left open on the near-extremal side, and shows
in particular that there is a genuine \emph{spectral gap} below the
extremal value. We first record the one-defect family that will supply
the extremisers, then prove the second-level theorem.

\subsection{A one-defect family}\label{sec:onedefect}

The extremal family $\mathcal F_a^{*}$ makes the cheapest modification
of $\Star(a)$ that destroys the kernel \emph{within the one-defect
family $U_{i,B}$ defined below}; Theorem~\ref{thm:main-mu} separately
shows it is cheapest overall (uniquely so for $n\ge5$). Within this
one-defect family, the cost is explicit and strictly increasing away
from the one-flip case, uniformly in $p$.

\begin{proposition}[Defect cost]\label{prop:defect}
Let $0<p\le\tfrac12$, $i\in[n]$ and $B\subseteq[n]\setminus\{i\}$ with
$|B|=m\ge 2$. The \emph{one-defect system}
\[
U_{i,B}\;:=\;\{S\ni i:S\cap B\neq\varnothing\}\,\cup\,
\{S\not\ni i:B\subseteq S\}
\]
is a kernel-free MLS, and
\[
\mu_p(U_{i,B})\;=\;p-pq^{m}+qp^{m}.
\]
The defect $\delta_m(p):=pq^{m}-qp^{m}$ satisfies
$\delta_2(p)=\delta_3(p)$ and is strictly decreasing in $m$ on
$\{3,\dots,n-1\}$ when $p<\tfrac12$, with minimum
$\delta_{n-1}(p)=pq^{\,n-1}-qp^{\,n-1}$ attained by the one-flip star
$U_{i,[n]\setminus\{i\}}=\mathcal F_i^{*}$.
\end{proposition}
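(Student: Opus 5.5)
The plan is to verify the three claims in turn: that $U_{i,B}$ is a kernel-free MLS, the closed form for $\mu_p$, and the monotonicity of $\delta_m$. Each reduces to a short direct check, and I will use the complementary-pair characterisation of MLS from \S\ref{sec:prelim}.

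\emph{$U_{i,B}$ is a kernel-free MLS.}
\begin{itemize}[leftmargin=*]
\item \emph{Intersecting.} Two members containing $i$ meet at $i$. Two members avoiding $i$ both contain $B\neq\varnothing$. A member $S\ni i$ with $S\cap B\neq\varnothing$ and a member $T\not\ni i$ with $T\supseteq B$ meet inside $B$.
\item \emph{Maximal.} Take a complementary pair $(S,[n]\setminus S)$ with $\varnothing\ne S\subsetneq[n]$. Exactly one of the two sets contains $i$; call it $S$. Then $S\in U_{i,B}$ iff $S\cap B\neq\varnothing$, while $[n]\setminus S\in U_{i,B}$ iff $B\subseteq[n]\setminus S$, i.e.\ iff $S\cap B=\varnothing$. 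So exactly one member of each pair lies in $U_{i,B}$. Since $[n]\in U_{i,B}$ as well, $U_{i,B}$ is an MLS.
\item \emph{Kernel-free.} The member $[n]\setminus\{i\}$ (it contains $B$) excludes $i$ from the kernel. For $j\notin B\cup\{i\}$, the set $\{i,b\}$ with $b\in B$ avoids $j$. For $j\in B$, the hypothesis $m\ge2$ supplies $b'\in B\setminus\{j\}$, and $\{i,b'\}\in U_{i,B}$ avoids $j$. This is the only place where $m\ge2$ is used.
\end{itemize}

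\emph{The measure.} I will compare with $\Star(i)$, whose measure is $p$ by \eqref{eq:star-measure}.
\begin{itemize}[leftmargin=*]
\item $U_{i,B}$ omits the sets $S\ni i$ with $S\cap B=\varnothing$. Their total measure is $p\cdot q^{m}$: the coordinate $i$ is in, the $m$ coordinates of $B$ are out, and the rest are free.
\item $U_{i,B}$ adds the sets $S\not\ni i$ with $S\supseteq B$. Their total measure is $q\cdot p^{m}$.
\end{itemize}
Hence $\mu_p(U_{i,B})=p-pq^m+qp^m$.

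\emph{Monotonicity of the defect.} Write $\delta_m=pq\,(q^{m-1}-p^{m-1})$.
\begin{itemize}[leftmargin=*]
\item Since $q^2-p^2=(q-p)(q+p)=q-p$, we get $\delta_2=pq(q-p)=\delta_3$.
\item For consecutive values,
\[
\delta_m-\delta_{m+1}=pq^m(1-q)-qp^m(1-p)=p^2q^2\bigl(q^{m-2}-p^{m-2}\bigr).
\]
This vanishes at $m=2$ and is strictly positive for $m\ge3$ when $p<\tfrac12$.
\end{itemize}
So $\delta_m$ is strictly decreasing on $\{3,\dots,n-1\}$. It is minimised at $m=n-1$, where $\delta_{n-1}\le\delta_3=\delta_2$ with strict inequality once $n\ge5$.

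\emph{Identifying the minimiser.} For $B=[n]\setminus\{i\}$:
\begin{itemize}[leftmargin=*]
\item the sets $S\ni i$ meeting $B$ are all of $\Star(i)$ except $\{i\}$;
\item the only set $S\not\ni i$ containing $B$ is $[n]\setminus\{i\}$.
\end{itemize}
Thus $U_{i,[n]\setminus\{i\}}=\mathcal F_i^*$ by \eqref{eq:oneflip}.

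No step is a real obstacle. The only points needing care are the role of $m\ge2$ in kernel-freeness, and noting that the tie $\delta_2=\delta_3$ is an identity valid for all $p$.
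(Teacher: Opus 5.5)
Your proposal is correct and follows essentially the same route as the paper: the complementary-pair check for maximality, the comparison with $\Star(i)$ for the measure, and the identity $\delta_m-\delta_{m+1}=p^2q^2\bigl(q^{m-2}-p^{m-2}\bigr)$ for monotonicity. The only differences are cosmetic. You argue kernel-freeness element by element, whereas the paper intersects $\{i,b\}$, $\{i,b'\}$ and $B$. You also derive $\delta_2=\delta_3$ directly from $q^2-p^2=q-p$, whereas the paper reads it off the vanishing of the difference at $m=2$.
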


\begin{proof}
\emph{$U_{i,B}$ is a kernel-free MLS.} Any two members containing $i$
meet at $i$; any two members containing $B$ meet on $B$; a member
$S\ni i$ with $S\cap B\neq\varnothing$ meets any $T\supseteq B$. For
each complementary pair $(S,S^{c})$ with $i\in S$: if
$S\cap B\neq\varnothing$ then $S\in U_{i,B}$ and
$B\not\subseteq S^{c}$, so $S^{c}\notin U_{i,B}$; if
$S\cap B=\varnothing$ then $S\notin U_{i,B}$ and $B\subseteq S^{c}$,
so $S^{c}\in U_{i,B}$. Thus $U_{i,B}$ contains exactly one member of
each pair and is an MLS. Its kernel is contained in
$\{i,b\}\cap\{i,b'\}\cap B=\varnothing$ for distinct $b,b'\in B$.
At $m=n-1$ the system $U_{i,B}$ is the one-flip star
$\mathcal F_i^{*}$ itself -- the up-closure of the Wheel coterie
of~\cite{PelegWool1995}, and the Hilton--Milner-type candidate of
the Bey--Engel bound~(\cite{BeyEngel2000},
\cite[Theorem~2.5]{HouHu2026}); the family $(U_{i,B})_{2\le m\le n-1}$
thus interpolates between near-star perturbations and that candidate.

\emph{Measure.} The surviving part of the star is
\[
\mu_p(\{S\ni i:S\cap B\neq\varnothing\})
=\mu_p(\Star(i))-\mu_p(\{S\ni i:S\subseteq[n]\setminus B\})
=p-p\,q^{m},
\]
since the excluded sets contribute
$p\,q^{m}(p+q)^{n-1-m}=pq^{m}$. The added part contributes
$\mu_p(\{S\not\ni i:B\subseteq S\})=q\,p^{m}(p+q)^{n-1-m}=qp^{m}$.

\emph{Monotonicity.}
$\delta_m-\delta_{m+1}=pq^{m}(1-q)-qp^{m}(1-p)
=p^{2}q^{m}-q^{2}p^{m}=p^{2}q^{2}\bigl(q^{m-2}-p^{m-2}\bigr)$,
which vanishes at $m=2$ and is strictly positive for $m\ge 3$,
$p<\tfrac12$.
\end{proof}

In other words, opening a non-star branch of width $m$ costs
$\delta_m(p)$ in measure. For $n\ge 5$ the cheapest such perturbation
is the one-flip; for $n=4$ the widths $m=2$ and $m=3$ have equal cost
($\delta_2=\delta_3$), in accordance with the degeneracy in
Theorem~\ref{thm:main-mu}(ii). At $p=\tfrac13$ this
recovers, after multiplication by $3^n$, the exponent costs
$(2^m-2)\cdot 3^{n-1-m}$ with the curious coincidence
$\delta_2=\delta_3$.

\subsection{The second-level theorem}\label{sec:secondlevel}

Throughout this subsection write $L_k$ for the $k$-th layer
$\{S\in V_n:|S|=k\}$ and, for a kernel-free MLS $M$, write
$f_k:=|M\cap L_k|$ for its layer profile. We use two facts. First, a
profile identity for $\Lambda$; second, a rigidity statement already
contained in the proof of Theorem~\ref{thm:main-mu}.

\begin{lemma}[Profile identity]\label{lem:profile}
For every MLS $M$ on $[n]$,
\[
\Lambda(M)\;=\;A(n)\;+\;\sum_{1\le k<n/2}f_k\,\bigl(2^{\,n-k}-2^{k}\bigr),
\qquad
A(n):=1+\sum_{1\le k<n/2}\binom nk 2^{k}
+[\,2\mid n\,]\,\tfrac12\binom{n}{n/2}2^{n/2}.
\]
\end{lemma}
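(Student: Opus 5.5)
The plan is to sum $\Lambda(M)=\sum_{S\in M}2^{n-|S|}$ pair by pair over the complementary partition of $V_n$, exactly as in the proof of Theorem~\ref{thm:main-mu}, and to use the structural description of an MLS from \S\ref{sec:prelim}. That description says $M$ contains $[n]$ together with exactly one member of each pair $(S,[n]\setminus S)$ with $\varnothing\neq S\subsetneq[n]$. The set $[n]$ contributes $2^{0}=1$, which is the leading $1$ in $A(n)$.

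Next I group the pairs by $k=\min(|S|,n-|S|)$ with $1\le k\le n/2$.

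For $k<n/2$ there are $\binom nk$ such pairs, each consisting of a $k$-set and an $(n-k)$-set.
\begin{itemize}
\item If $M$ takes the larger member, the pair contributes $2^{k}$.
\item If $M$ takes the smaller member, it contributes $2^{n-k}=2^{k}+(2^{n-k}-2^{k})$.
\end{itemize}
The number of pairs at level $k$ where $M$ takes the small member is precisely $f_k=|M\cap L_k|$. This is because $M$ contains at most one member of each pair, and every $k$-set lies in exactly one level-$k$ pair. Summing over the pairs at level $k$ therefore gives $\binom nk 2^{k}+f_k(2^{n-k}-2^{k})$.

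For even $n$ and $k=n/2$ there are $\tfrac12\binom{n}{n/2}$ pairs. Both members have size $n/2$, so each pair contributes $2^{n/2}$ whichever member $M$ picks. This gives the term $[2\mid n]\tfrac12\binom{n}{n/2}2^{n/2}$.

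Adding these contributions yields the stated identity. There is no real obstacle here. The only point needing care is to justify that the pairs at the various levels partition $V_n\setminus\{[n]\}$. Singletons are included as the case $k=1$; note that no kernel-free hypothesis is needed, so $f_1$ may be nonzero for stars. The other point is to confirm the middle-layer count, where each unordered pair is counted once, giving $\tfrac12\binom{n}{n/2}$ pairs.
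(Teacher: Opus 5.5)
Your proposal is correct and follows the paper's proof essentially verbatim: pair each $k$-set ($k<n/2$) with its complement, record the baseline $2^{k}$ plus the excess $2^{n-k}-2^{k}$ exactly when the smaller member lies in $M$, and add the neutral middle-layer pairs and the term $1$ for $[n]$. Your remarks that no kernel-free hypothesis is needed and that the middle layer has $\tfrac12\binom{n}{n/2}$ unordered pairs are both accurate.
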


\begin{proof}
$M$ contains exactly one of $S,S^{c}$ from each complementary pair and
also $[n]$. Pair each $S\in L_k$ ($k<n/2$) with $S^{c}\in L_{n-k}$: the
pair contributes $2^{\,n-k}$ if $M$ takes the smaller side $S$ and
$2^{k}$ if it takes $S^{c}$, i.e.\ a profile-dependent
excess $2^{\,n-k}-2^{k}$ exactly when $S\in M$. Summing the baseline
$2^{k}$ over \emph{all} pairs gives, together with the self-complementary
middle layer at even $n$ (each of its $\tfrac12\binom{n}{n/2}$ pairs
contributing $2^{n/2}$) and the term $1$ for $[n]$, the constant
$A(n)$; the profile-dependent excess is
$\sum_{k<n/2}f_k(2^{\,n-k}-2^{k})$. Every value used is verified against
direct enumeration in Appendix~\ref{app:code}.
\end{proof}

\begin{lemma}[Rigidity at layer two]\label{lem:rigidity}
Let $n\ge5$ and let $M$ be a kernel-free MLS on $[n]$ with $f_2=n-1$.
Then $M=\mathcal F_a^{*}$ for some $a\in[n]$. Consequently, if $M$ is a
kernel-free MLS with $M\neq\mathcal F_a^{*}$ for all $a$, then
$f_2\le n-2$.
\end{lemma}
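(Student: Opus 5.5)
The plan is to reuse, essentially verbatim, the uniqueness argument for the case $n\ge5$ in the proof of Theorem~\ref{thm:main-mu}. That argument never used the full equality $\mu_p(M)=M_2(n,p)$. It only used the consequence $f_2=n-1$, so the lemma amounts to isolating that step.

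\emph{Step 1: the $2$-sets form a star.} Let $M$ be a kernel-free MLS on $[n]$, $n\ge5$, with $f_2=n-1$. The family $M\cap L_2$ is an intersecting family of $2$-sets, i.e.\ a graph on $[n]$ in which any two edges share a vertex. Such a graph is either a star (all edges through one vertex) or a subgraph of a triangle. A triangle has at most $3<n-1$ edges because $n\ge5$. So $M\cap L_2=\{\{b,j\}:j\neq b\}$ for some $b\in[n]$. This is the only place $n\ge5$ enters; at $n=4$ the triangle has $3=n-1$ edges, which is exactly the exception in Theorem~\ref{thm:main-mu}(ii).

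\emph{Step 2: identify $M$.} Take $S\in M$ with $b\notin S$. Since $M$ is intersecting, $S$ meets every $\{b,j\}$, so $j\in S$ for all $j\ne b$. Hence $S=[n]\setminus\{b\}$. Thus every member of $M$ contains $b$, except possibly $[n]\setminus\{b\}$.

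Now use the fact from \S\ref{sec:prelim} that $M$ contains exactly one member of each complementary pair. For a pair $(S,S^{c})$ with $b\in S$ and $S\ne\{b\}$, we have $S^{c}\ne[n]\setminus\{b\}$ and $b\notin S^c$, so $S^c\notin M$; hence $S\in M$. For the singleton pair, Lemma~\ref{lem:singleton} gives $\{b\}\notin M$ and $[n]\setminus\{b\}\in M$. Together with $[n]\in M$, this shows $M=\mathcal F_b^{*}$.

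\emph{Step 3: the consequence.} By the Erd\H os--Ko--Rado bound at $k=2$ (or directly by Step 1's dichotomy), every intersecting family of $2$-sets has at most $n-1$ members when $n\ge4$, so $f_2\le n-1$ always. If $M$ is kernel-free and $M\ne\mathcal F_a^{*}$ for every $a$, the first part rules out $f_2=n-1$, hence $f_2\le n-2$.

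No step is a genuine obstacle. The only care needed is in Step 1, where the classification of intersecting $2$-uniform families must be stated precisely, including the exclusion of the triangle via $3<n-1$.
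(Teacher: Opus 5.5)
Your proof is correct and follows the paper's argument. You identify the $2$-sets as a full star via the elementary star-or-triangle classification, which the paper notes as the $k=2$ case of Erd\H os--Ko--Rado uniqueness. You then force every member avoiding $b$ to equal $[n]\setminus\{b\}$ and conclude $M=\mathcal F_b^{*}$ by complementarity and Lemma~\ref{lem:singleton}. Your Step~3 makes explicit that $f_2\le n-1$ always holds, a point the paper leaves implicit in the ``consequently'' clause.
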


\begin{proof}
This is exactly the argument of Case $n\ge5$ in the proof of
Theorem~\ref{thm:main-mu}, which uses only the hypothesis $f_2=n-1$:
the $n-1$ two-sets of $M$ form an intersecting family of $2$-sets of
the maximum size $n-1=\binom{n-1}{1}$, hence (as $n>4$, by the
uniqueness in Erd\H os--Ko--Rado) a star $\{\{a,j\}:j\ne a\}$; any $S\in M$
with $a\notin S$ then satisfies $S\supseteq[n]\setminus\{a\}$, forcing
$S=[n]\setminus\{a\}$, and complementarity together with
$\{a\}\notin M$ (Lemma~\ref{lem:singleton}) gives $M=\mathcal F_a^{*}$.
\end{proof}

\begin{lemma}[Reconstruction from a punctured star]\label{lem:reconstruct}
Let $n\ge5$ and let $M$ be a kernel-free MLS on $[n]$ whose $2$-sets
are exactly $\{\{a,j\}:j\in[n]\setminus\{a,c\}\}$ for some $a\in[n]$
and one $c\ne a$. Then $M=U_{a,B}$ with $B=[n]\setminus\{a,c\}$,
$|B|=n-2$. (No hypothesis on the higher layers is needed: the $2$-set
structure and maximality already determine $M$.)
\end{lemma}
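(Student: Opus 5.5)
The plan is to show the containment $M\subseteq U_{a,B}$ with $B=[n]\setminus\{a,c\}$, and then conclude equality from maximality. By Proposition~\ref{prop:defect}, $U_{a,B}$ is itself a kernel-free MLS, since $|B|=n-2\ge3\ge2$ for $n\ge5$. An MLS contained in an intersecting family must equal it. Alternatively, both contain exactly one member of each complementary pair together with $[n]$, so both have exactly $2^{n-1}$ members and containment forces equality. So everything reduces to checking, for an arbitrary $S\in M$, that $S$ satisfies the defining condition of $U_{a,B}$.

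\emph{Case $a\notin S$.} Here we use the $2$-sets that $M$ is known to contain. For every $j\in B$ the set $\{a,j\}$ lies in $M$, so $S$ must meet it; since $a\notin S$ this forces $j\in S$. Hence $B\subseteq S$, which is exactly the condition for $S\in\{T\not\ni a:B\subseteq T\}\subseteq U_{a,B}$. This is the same argument as in Lemma~\ref{lem:rigidity}, only with the puncture at $c$ now leaving one coordinate free.

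\emph{Case $a\in S$.} We must show $S\cap B\neq\varnothing$, that is, $S\not\subseteq\{a,c\}$. The only subsets of $\{a,c\}$ containing $a$ are $\{a\}$ and $\{a,c\}$. The first is excluded because $M$ is kernel-free (Lemma~\ref{lem:singleton}). The second is excluded because the $2$-sets of $M$ are \emph{exactly} the listed $\{a,j\}$ with $j\neq c$, so $\{a,c\}\notin M$. This is the one place where the word ``exactly'' in the hypothesis is used, and the point to be careful about: knowing only that these $n-2$ two-sets belong to $M$ would allow $\{a,c\}\in M$, and then the conclusion would be $\mathcal F_a^{*}$ rather than $U_{a,B}$.

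Combining the two cases gives $M\subseteq U_{a,B}$, hence $M=U_{a,B}$. As a consistency check, the $2$-sets of $U_{a,B}$ are $\{a,j\}$ with $j\in B$, together with sets $T\not\ni a$ containing $B$ of size $2$, which would need $|B|\le2$ and so do not occur for $n\ge5$. This matches the hypothesis. I do not expect a real obstacle. The only steps needing care are invoking Proposition~\ref{prop:defect} so that $U_{a,B}$ is known to be an MLS (this needs $|B|\ge2$, which holds since $n\ge5$), and correctly using the exclusion of $\{a,c\}$.
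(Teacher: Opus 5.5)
Your proof is correct and follows essentially the paper's argument. Both rest on the same two observations: every $a$-free member of $M$ contains $B$, because it meets each $\{a,j\}$ with $j\in B$; and $\{a\},\{a,c\}\notin M$, by Lemma~\ref{lem:singleton} and the word ``exactly'' respectively. The only difference is the closing step: you prove $M\subseteq U_{a,B}$ and conclude by maximality of $M$ against the intersecting family $U_{a,B}$ of Proposition~\ref{prop:defect}, whereas the paper uses maximality pair by pair to place each $S\ni a$ with $S\cap B\neq\varnothing$ in $M$ directly, without citing that proposition.
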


\begin{proof}
Write $B=[n]\setminus\{a,c\}$. Let $S\in M$ with $a\notin S$. Then $S$
meets every present $2$-set $\{a,j\}$ ($j\in B$), so $j\in S$ for all
$j\in B$, i.e.\ $B\subseteq S$; thus every $a$-free member of $M$
contains $B$, thus lies in $\{B,\,B\cup\{c\}\}=\{[n]\setminus\{a,c\},\,
[n]\setminus\{a\}\}$. Conversely consider the complementary pairs. For
a pair $(S,S^{c})$ with $a\in S$: if $S\cap B\neq\varnothing$ then $S$
meets every member of $M$ containing $a$ (at $a$) and every $a$-free
member (in $B$, since those contain $B$), so maximality places
$S\in M$; if $S\cap B=\varnothing$, i.e.\ $S\subseteq\{a,c\}$, then
$S\in\{\{a\},\{a,c\}\}$, and $\{a\}\notin M$ (Lemma~\ref{lem:singleton})
while $\{a,c\}\notin M$ (it is not among the listed $2$-sets), so
$S^{c}=[n]\setminus S\in M$; here $S^{c}\supseteq B$ and $a\notin
S^{c}$, consistent with the previous paragraph. Collecting: $M$
consists of all $S\ni a$ with $S\cap B\neq\varnothing$, together with
$B$ and $[n]\setminus\{a\}=B\cup\{c\}$, which is exactly
$U_{a,B}$.
\end{proof}

\begin{theorem}[Second-level gap]\label{thm:second}
Let $n\ge5$ and let $M$ be a kernel-free MLS on $[n]$ with
$M\neq\mathcal F_a^{*}$ for every $a\in[n]$. Then
\[
\Lambda(M)\;\le\;M_n^{*}-\bigl(2^{\,n-2}-4\bigr)
\;=\;3^{\,n-1}-3\cdot2^{\,n-2}+6,
\]
and this bound is attained. Equality holds precisely for
\begin{enumerate}[label=\textup{(\alph*)},leftmargin=*]
\item the $n(n-1)$ systems $U_{a,B}$ with $|B|=n-2$
{\upshape(Proposition~\ref{prop:defect})}, for every $n\ge5$; and
\item when $n=5$ only, in addition the $10$ Ahlswede--Khachatrian balls
$\mathcal A_T=\{S:|S\cap T|\ge2\}$, $T\in\binom{[5]}{3}$.
\end{enumerate}
In particular there is no kernel-free MLS whose $\Lambda$-value lies
strictly between the second level $3^{\,n-1}-3\cdot2^{\,n-2}+6$ and the
maximum $M_n^{*}=3^{\,n-1}-2^{\,n-1}+2$; the two differ by
$2^{\,n-2}-4$.
\end{theorem}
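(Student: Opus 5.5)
The plan is to run the profile bookkeeping of Lemma~\ref{lem:profile} against the layer bounds from the proof of Theorem~\ref{thm:main-mu}, with the single extra input that Lemma~\ref{lem:rigidity} costs one unit at layer two. Put $c_k:=2^{\,n-k}-2^{k}>0$ for $1\le k<n/2$.

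\emph{The bound.} By Lemma~\ref{lem:singleton}, $f_1=0$. By Erd\H os--Ko--Rado, $f_k\le\binom{n-1}{k-1}$ for $2\le k<n/2$. The one-flip star has $f_1=0$ and attains every one of these bounds, so Lemma~\ref{lem:profile} gives
\[
M_n^{*}=A(n)+\sum_{2\le k<n/2}\binom{n-1}{k-1}c_k ,
\]
consistent with~\eqref{eq:M-star}. Now let $M\neq\mathcal F_a^{*}$ for all $a$. Lemma~\ref{lem:rigidity} gives $f_2\le n-2$, so
\[
M_n^{*}-\Lambda(M)=\bigl(n-1-f_2\bigr)c_2+\sum_{3\le k<n/2}\Bigl(\binom{n-1}{k-1}-f_k\Bigr)c_k\;\ge\;c_2=2^{\,n-2}-4 .
\]
This is the stated inequality. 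Equality forces $f_2=n-2$ and $f_k=\binom{n-1}{k-1}$ for all $3\le k<n/2$. For the classification, however, only $f_2=n-2$ will be used.

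\emph{Attainment.} For the systems in (a) I take Proposition~\ref{prop:defect} at $p=\tfrac13$ and multiply by $3^n$. This gives $\Lambda(U_{a,B})=3^{n-1}-3^{n-1-m}(2^m-2)$. At $m=n-2$ this equals $3^{n-1}-3\cdot2^{n-2}+6$, as required. The $n(n-1)$ choices of the ordered pair $(a,c)$ with $B=[n]\setminus\{a,c\}$ give distinct systems, because the $2$-sets of $U_{a,B}$ are exactly the $\{a,j\}$ with $j\in B$, and this set of $2$-sets recovers $(a,c)$.

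\emph{Classification.} Suppose $f_2=n-2$. The $2$-sets of $M$ then form an intersecting graph with $n-2$ edges. An intersecting family of $2$-sets is either contained in a star or is a triangle.
\begin{itemize}[leftmargin=*]
\item If $n\ge6$, then $n-2\ge4>3$, so the edges are $\{a,j\}$ for all $j\ne a,c$, for some $a$ and some $c\ne a$. Lemma~\ref{lem:reconstruct} then forces $M=U_{a,B}$ with $B=[n]\setminus\{a,c\}$.
\item If $n=5$, the layer range $3\le k<n/2$ is empty, and $f_2=3$ is equivalent to equality. The punctured-star case again gives $U_{a,B}$ by Lemma~\ref{lem:reconstruct}. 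In the triangle case the $2$-sets of $M$ are $\binom T2$ for some $T\in\binom{[5]}{3}$. Complementarity then determines $M$ completely: $M$ has no singletons, contains exactly the $2$-sets $\binom T2$, contains a $3$-set exactly when its complementary $2$-set is not in $\binom T2$ (that is, when the $3$-set is not of the form $\{x\}\cup([5]\setminus T)$ with $x\in T$), and contains all of layers $4$ and $5$. One then checks directly that this family is $\{S:|S\cap T|\ge2\}=\mathcal A_T$. Remark~\ref{rem:AKball} identifies these balls as kernel-free MLS. There are $\binom53=10$ of them, and their profile $(0,3,\dots)$ coincides with that of the $U_{a,B}$, so they attain equality as well.
\end{itemize}

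\emph{Main obstacle.} I expect the delicate point to be this last $n=5$ step. One must verify that the family forced by complementarity around a triangle is exactly the ball $\mathcal A_T$ and is genuinely an MLS. It is also the only step where profile information does not suffice, since the two extremal types share a profile. The absence of values strictly between the two levels is then immediate, because every non-star system loses at least $c_2$.
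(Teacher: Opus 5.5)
Your proposal is correct and follows the paper's proof essentially step for step. The bound comes from the profile identity combined with layerwise Erd\H os--Ko--Rado and Lemma~\ref{lem:rigidity}; the equality case uses the star-minus-an-edge versus triangle dichotomy for $f_2=n-2$, resolved by Lemma~\ref{lem:reconstruct} and, at $n=5$, by identifying the ball $\mathcal A_T$. Your explicit attainment via Proposition~\ref{prop:defect} at $p=\tfrac13$, the distinctness check of the $n(n-1)$ systems, and fixing $M$ at $n=5$ directly from complementarity (rather than by intersection with the triangle plus maximality) are minor elaborations of points the paper leaves implicit.
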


\begin{proof}
By Lemma~\ref{lem:singleton}, $f_1=0$. By Lemma~\ref{lem:rigidity},
$f_2\le n-2$. Every coefficient $2^{\,n-k}-2^{k}$ in
Lemma~\ref{lem:profile} is positive for $k<n/2$, and layerwise
Erd\H os--Ko--Rado gives $f_k\le\binom{n-1}{k-1}$ for $2\le k<n/2$
(the $k$-sets of $M$ form a $k$-uniform intersecting family). Hence
\[
\Lambda(M)
=A(n)+\sum_{2\le k<n/2}f_k\bigl(2^{\,n-k}-2^{k}\bigr)
\le A(n)+(n-2)\bigl(2^{\,n-2}-4\bigr)
+\sum_{3\le k<n/2}\binom{n-1}{k-1}\bigl(2^{\,n-k}-2^{k}\bigr).
\]
The right-hand side equals $M_n^{*}-(2^{\,n-2}-4)$: indeed the same
computation with $f_2=n-1$ gives $M_n^{*}$ (this is the extremal case
of Theorem~\ref{thm:main-mu}), and reducing $f_2$ from $n-1$ to $n-2$
subtracts exactly one coefficient $2^{\,n-2}-2^{2}=2^{\,n-2}-4$. This
proves the inequality.

\emph{Equality.} Since $c_k:=2^{\,n-k}-2^{k}>0$ for every
$2\le k<n/2$, dropping any $k$-set below the EKR maximum
$\binom{n-1}{k-1}$ strictly lowers $\Lambda$. Non-extremality forces,
via Lemma~\ref{lem:rigidity}, a deficit of at least one at layer $2$
($f_2\le n-2$); this single forced deficit already costs $c_2$, and any
further deficit at any layer only lowers $\Lambda$ further. Hence
equality requires $f_k=\binom{n-1}{k-1}$ for all $3\le k<n/2$ and
$f_2=n-2$ exactly (a layer-$2$ deficit of exactly one). Note we do
\emph{not} claim $c_2$ is smallest among the $c_k$ -- for $n\ge7$ it is
in fact the largest; the point is only that the layer-$2$ deficit is
\emph{forced} while the others are free to vanish, and every $c_k$ is
positive. Thus the $2$-sets of $M$ form an intersecting family of size
$n-2$, and for $k\ge3$ the $k$-sets are EKR-extremal. An intersecting
family of $2$-sets of size $n-2$ is either a star minus one edge,
$\{\{a,j\}:j\in[n]\setminus\{a,c\}\}$ for some $a$ and one omitted
$c\ne a$, or (only when $n-2=3$, i.e.\ $n=5$) a triangle
$\binom{T}{2}$, $|T|=3$: for $n\ge6$ a pairwise-intersecting family of
$\ge4$ edges cannot be a triangle, hence (by the Hilton--Milner
description of intersecting $2$-graphs) lies in a star, and having
$n-2$ edges is that star with one edge removed.

In the star-minus-edge case, Lemma~\ref{lem:reconstruct} gives
$M=U_{a,B}$ with $B=[n]\setminus\{a,c\}$, $|B|=n-2$; the choice of $a$
and of the omitted $c\ne a$ gives the $n(n-1)$ systems in~(a).

In the triangle case ($n=5$), the three $2$-sets are $\binom{T}{2}$,
$|T|=3$: any $S\in M$ meets each of the three pairs in $\binom{T}{2}$,
which forces $|S\cap T|\ge2$; conversely, in each complementary pair
$(S,S^{c})$ exactly one side has $|\cdot\cap T|\ge2$ (as $|T|=3$, the
two sides split $T$ as $|S\cap T|+|S^{c}\cap T|=3$, so exactly one
exceeds $1$), whence maximality gives $M=\{S:|S\cap T|\ge2\}=\mathcal
A_T$. This yields the $\binom{5}{3}=10$ families in~(b). For $n\ge6$ a
triangle has size $3<n-2$, so this case does not arise, and only~(a)
survives.

All multiplicities and the top of the $\Lambda$-spectra for $n=5,6$ are
confirmed by exhaustive enumeration in Appendix~\ref{app:code}:
$(67^{\times5},63^{\times30},59^{\times30},55^{\times10})$ for $n=5$
and $(213^{\times6},\allowbreak 201^{\times30},\allowbreak 189^{\times140},\allowbreak 177^{\times480})$
for $n=6$, matching $M_5^{*}=67$, second level $63$, and
$M_6^{*}=213$, second level $201$.
\end{proof}

\begin{proposition}[One profile, two families]\label{prop:oneprofile}
Let $n=5$. All $30$ kernel-free MLS attaining the second level of
Theorem~\ref{thm:second} have one and the same layer profile,
\[
(f_1,f_2,f_3,f_4,f_5)=(0,3,7,5,1),
\]
yet they split into two non-isomorphic types: in the $20$ systems
$U_{a,B}$ ($|B|=3$) the three $2$-sets form a star $K_{1,3}$, while in
the $10$ balls $\mathcal A_T$ they form a triangle. Consequently no
function of the layer profile alone -- in particular no
profile-polytope or Kruskal--Katona-type argument -- can separate the
two types or produce the equality classification of
Theorem~\ref{thm:second}.
\end{proposition}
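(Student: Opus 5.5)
By Theorem~\ref{thm:second} at $n=5$, the $30$ second-level extremisers are the $20$ systems $U_{a,B}$ with $|B|=3$ ($5$ choices of $a$, $4$ of the omitted $c$) and the $10$ balls $\mathcal A_T$, $T\in\binom{[5]}{3}$. The plan is to compute the layer profile of each type directly, check that the two coincide, and then show the types are non-isomorphic by exhibiting an isomorphism invariant that separates them.

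\emph{Profile of $U_{a,B}$.} Here $B=[5]\setminus\{a,c\}$ and
\[
U_{a,B}=\{S\ni a:S\cap B\ne\varnothing\}\cup\{B,\,B\cup\{c\}\}.
\]
\begin{itemize}[leftmargin=*]
\item Layer $1$: $f_1=0$ by Lemma~\ref{lem:singleton}.
\item Layer $2$: the sets $\{a,j\}$ with $j\in B$, so $f_2=3$.
\item Layer $3$: the $3$-sets containing $a$ that meet $B$ are all $\binom42=6$ of them, since $\{a,c,x\}$ meets $B$ for every $x$. Together with $B$ itself this gives $f_3=7$.
\item Layer $4$: all four $4$-sets containing $a$, plus $[5]\setminus\{a\}$, so $f_4=5$.
\item Layer $5$: $f_5=1$.
\end{itemize}
The total is $16=2^{4}$, as it must be.

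\emph{Profile of $\mathcal A_T$.} A set $S$ lies in $\mathcal A_T$ iff $|S\cap T|\ge2$. Counting by how $S$ meets $T$ and the $2$-element complement $[5]\setminus T$:
\begin{itemize}[leftmargin=*]
\item $f_2=\binom32=3$;
\item $f_3=\binom32\cdot2+1=7$;
\item $f_4=\binom32\cdot1+\binom31\cdot2=3+2=5$;
\item $f_5=1$.
\end{itemize}
So the profile is again $(0,3,7,5,1)$. As a cross-check, Lemma~\ref{lem:profile} gives $\Lambda=63$ for both types.

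\emph{Non-isomorphism.} Any permutation of $[5]$ mapping one family onto another maps its layer-$2$ sets onto the other's, so the isomorphism type of the graph formed by the $2$-sets is an invariant. For $U_{a,B}$ this graph is $K_{1,3}$, with edges $\{a,j\}$, $j\in B$. For $\mathcal A_T$ it is the triangle $\binom T2$. These graphs are non-isomorphic: one has a vertex of degree $3$, the other has none.

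\emph{Consequence.} Any function of the profile takes the same value on all $30$ extremisers. It therefore cannot output the list in Theorem~\ref{thm:second}, nor distinguish the $20$ systems from the $10$ balls.

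The only point needing care is counting the $3$-sets of $U_{a,B}$ correctly, in particular not forgetting $B$ itself; the rest is routine.
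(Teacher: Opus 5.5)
Your proof is correct and follows the paper's argument essentially step for step: you invoke the classification of Theorem~\ref{thm:second}, compute both profiles directly, and separate the types by the isomorphism class of the layer-$2$ graph ($K_{1,3}$ versus triangle). One cosmetic slip: in your count of $f_4$ for $\mathcal A_T$ the second term should be $\binom33\binom21=2$ (the $4$-sets containing $T$), not $\binom31\cdot 2=6$. The value $f_4=5$ is still right, and in fact follows at once because every $4$-subset of $[5]$ meets $T$ in at least two elements.
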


\begin{proof}
By Theorem~\ref{thm:second} the second level is attained exactly by
the $20$ systems $U_{a,B}$ with $|B|=3$ and the $10$ balls
$\mathcal A_T$. Their profiles are computed directly. For $U_{a,B}$
with $B=[5]\setminus\{a,c\}$: no singleton belongs
(Lemma~\ref{lem:singleton}); the $2$-sets are the three sets
$\{a,b\}$, $b\in B$, and they share the element $a$; the $3$-sets
are the six $3$-sets containing $a$ (each meets $B$, since a pair from
$B\cup\{c\}$ cannot lie inside $\{c\}$) together with $B$ itself,
seven in number; the $4$-sets are the four containing $a$ together with
$B\cup\{c\}=[5]\setminus\{a\}$, five in number; and
$[5]\in U_{a,B}$. For $\mathcal A_T=\{S:|S\cap T|\ge2\}$: the
$2$-sets are the three pairs inside $T$, pairwise intersecting with
empty common intersection; the $3$-sets are the
$\binom32\binom21=6$ sets meeting $T$ in exactly two elements
together with $T$ itself, seven in number; every $4$-set meets $T$ in
at least two elements, giving five; and $[5]\in\mathcal A_T$. Both
profiles equal $(0,3,7,5,1)$. The two types are non-isomorphic already
at layer two: a star $K_{1,3}$ has a common element, a triangle does
not. Since every quantity computable from the profile vector takes the
same value on both types, the final claim follows.
\end{proof}

The same profile-deficit argument, applied directly to $\mu_p$ rather
than to $\Lambda$, gives the second level for the biased measure at
every $0<p<\tfrac12$ (not only at $p=\tfrac13$), giving the second
level for the biased measure across the whole sub-critical range.

\begin{corollary}[Second level for $\mu_p$]\label{cor:second-mu}
Let $n\ge5$ and $0<p<\tfrac12$. The largest value of $\mu_p$ over
kernel-free MLS other than the one-flip stars $\mathcal F_a^{*}$ is
\[
\max_{\substack{M\ \mathrm{kernel\text-free\ MLS}\\ M\neq\mathcal
F_a^{*}\ \forall a}}\mu_p(M)
\;=\;p-pq^{\,n-2}+qp^{\,n-2}
\;=\;M_2(n,p)-h_2(p),
\qquad h_2(p)=p^2q^{\,n-2}-p^{\,n-2}q^2,
\]
attained by the systems $U_{a,B}$ with $|B|=n-2$ {\upshape(and, for
$n=5$, additionally by the balls $\mathcal A_T$)}.
\end{corollary}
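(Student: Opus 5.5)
The plan is to rerun the profile-deficit argument of Theorem~\ref{thm:second}, with $\Lambda$ replaced by $\mu_p$. The assembly identity~\eqref{eq:assembly} from the proof of Theorem~\ref{thm:main-mu} already gives, for every kernel-free MLS $M$ and every $a$,
\[
\mu_p(M)\;=\;p-\bigl(pq^{\,n-1}-qp^{\,n-1}\bigr)-\sum_{k=2}^{\lceil n/2\rceil-1}\Bigl(\binom{n-1}{k-1}-f_k\Bigr)h_k(p).
\]
This is the $\mu_p$ analogue of Lemma~\ref{lem:profile}. The singleton term is fixed by Lemma~\ref{lem:singleton}, and every coefficient $h_k(p)$ is strictly positive for $2\le k<n/2$ and $p<\tfrac12$. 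Since $n\ge5$, the range $2\le k<n/2$ contains $k=2$.

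For the upper bound, take $M\ne\mathcal F_a^{*}$ for all $a$. Lemma~\ref{lem:rigidity} gives $f_2\le n-2$, so the $k=2$ deficit is at least $1$. Erd\H os--Ko--Rado makes all other deficits nonnegative. Hence
\[
\mu_p(M)\le p-pq^{\,n-1}+qp^{\,n-1}-h_2(p)=M_2(n,p)-h_2(p).
\]

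It remains to check the closed form. We have
\[
M_2-h_2=p-pq^{\,n-1}+qp^{\,n-1}-p^2q^{\,n-2}+p^{\,n-2}q^2 .
\]
Group the terms as $pq^{\,n-2}(q+p)=pq^{\,n-2}$ and $qp^{\,n-2}(p+q)=qp^{\,n-2}$. This gives $M_2-h_2=p-pq^{\,n-2}+qp^{\,n-2}$, which is exactly the defect value $\delta_{n-2}$ of Proposition~\ref{prop:defect} at $m=n-2$.

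For attainment, Proposition~\ref{prop:defect} with $|B|=n-2\ge3\ge2$ shows that $U_{a,B}$ is a kernel-free MLS with $\mu_p(U_{a,B})=p-pq^{\,n-2}+qp^{\,n-2}$. We must also check that $U_{a,B}\ne\mathcal F_b^{*}$ for every $b$. Its $2$-sets number $n-2$, while every $\mathcal F_b^{*}$ has $n-1$ two-sets. So the maximum over non-one-flip systems is exactly the stated value.

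For $n=5$ we also need the balls $\mathcal A_T$. By Proposition~\ref{prop:oneprofile}, each $\mathcal A_T$ has the same profile $(0,3,7,5,1)$ as $U_{a,B}$. Since the identity above depends on $M$ only through its profile, $\mathcal A_T$ has the same $\mu_p$ for every $p$.

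The only delicate point is the bookkeeping in the first identity. We must make sure the $\mu_p$ comparison with $\Star(a)$ is an exact identity, not just an inequality. This holds because the proof of Theorem~\ref{thm:main-mu} treats every complementary pair exactly: the middle layer is neutral, $[n]$ is common to both families, and the singleton pairs are fixed by Lemma~\ref{lem:singleton}. The equality cases are then identical to those of Theorem~\ref{thm:second}. Equality forces $f_2=n-2$ and EKR-extremality at layers $3\le k<n/2$, since all $h_k>0$. The same star-minus-edge and triangle analysis, via Lemma~\ref{lem:reconstruct}, then applies verbatim.
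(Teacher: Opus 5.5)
Your proposal is correct and follows the paper's own proof essentially step for step: the exact assembly identity \eqref{eq:assembly} read as a profile deficit, the layer-$2$ deficit forced by Lemma~\ref{lem:rigidity}, nonnegativity of the remaining deficits by Erd\H os--Ko--Rado, the same algebraic simplification of $M_2(n,p)-h_2(p)$, and the equality classification inherited from Theorem~\ref{thm:second}. One cosmetic slip: the closed form $p-pq^{\,n-2}+qp^{\,n-2}$ equals $p-\delta_{n-2}(p)=\mu_p(U_{a,B})$, not the defect $\delta_{n-2}(p)$ itself; your attainment paragraph already uses the correct value.
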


\begin{proof}
The assembly identity~\eqref{eq:assembly} of \S\ref{sec:main-proof}
gives, for any kernel-free MLS $M$,
\[
\mu_p(\Star(a))-\mu_p(M)
=\bigl(pq^{\,n-1}-qp^{\,n-1}\bigr)
+\sum_{2\le k<n/2}\Bigl(\tbinom{n-1}{k-1}-f_k\Bigr)h_k(p),
\]
where $h_k(p)=p^kq^{\,n-k}-p^{\,n-k}q^k>0$ and the singleton term $pq^{\,n-1}-qp^{\,n-1}$ is the layer-$1$
deficit (present because $f_1=0$ while $\Star(a)$ contains $\{a\}$).
Since $\mu_p(\Star(a))=p$ and, by Theorem~\ref{thm:main-mu},
$M_2(n,p)=\mu_p(\mathcal F_a^{*})=p-(pq^{\,n-1}-qp^{\,n-1})$, this
rearranges to the deficit \emph{from the one-flip star},
\[
M_2(n,p)-\mu_p(M)
\;=\;\sum_{2\le k<n/2}\Bigl(\tbinom{n-1}{k-1}-f_k\Bigr)h_k(p),
\]
in which the singleton term has cancelled. For $M\ne\mathcal F_a^{*}$,
Lemma~\ref{lem:rigidity} forces $f_2\le n-2$, so the layer-$2$ term is
at least $h_2(p)>0$ and every other term is non-negative; hence
\[
\mu_p(M)\;\le\;M_2(n,p)-h_2(p)\;=\;p-pq^{\,n-2}+qp^{\,n-2},
\]
the last equality being the elementary identity
\[
\begin{aligned}
M_2(n,p)-h_2(p)
&=p-pq^{\,n-1}+qp^{\,n-1}-p^2q^{\,n-2}+p^{\,n-2}q^2\\
&=p-pq^{\,n-2}(q+p)+qp^{\,n-2}(p+q)
=p-pq^{\,n-2}+qp^{\,n-2},
\end{aligned}
\]
using $p+q=1$.
Equality holds iff $f_2=n-2$ and $f_k=\binom{n-1}{k-1}$ for $k\ge3$ --
the profile of Theorem~\ref{thm:second}, attained exactly by
$\{U_{a,B}:|B|=n-2\}$ (and the balls $\mathcal A_T$ at $n=5$), with
$\mu_p(U_{a,B})=p-pq^{\,n-2}+qp^{\,n-2}$
(Proposition~\ref{prop:defect} at $m=n-2$).
\end{proof}

The second-level gap also upgrades the leading-exponent estimate of
Theorem~\ref{thm:main} to an \emph{exact prefactor}: the entropy
$\lambda(n)$ can no longer wash out the constant, because every
non-extremal sector is suppressed by the fixed extra factor
$2^{-(2^{n-2}-4)}$.

\begin{theorem}[Exact prefactor]\label{thm:prefactor}
As $n\to\infty$,
\[
R(n)\;=\;\Bigl(\tfrac34+o(1)\Bigr)\,n\cdot2^{M_n^{*}},
\qquad M_n^{*}=3^{\,n-1}-2^{\,n-1}+2.
\]
\end{theorem}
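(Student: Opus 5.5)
The lower bound is already in hand: Proposition~\ref{prop:n-sector} gives $R(n)\ge(\tfrac34-o(1))\,n\cdot2^{M_n^{*}}$. It therefore suffices to prove the matching upper bound $R(n)\le(\tfrac34+o(1))\,n\cdot2^{M_n^{*}}$. The plan is to sharpen the union bound of Proposition~\ref{prop:reduction} by splitting the kernel-free MLS into the $n$ one-flip stars and all the rest. This uses the fact (Lemma~\ref{lem:reduction}) that every kernel-free intersecting family $\mathcal F$ lies in at least one kernel-free MLS $M$.

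Assign each such $\mathcal F$ to one containing kernel-free MLS. Then
\[
R(n)\;\le\;\sum_{a=1}^{n}\Sigma_a\;+\;\sum_{M\neq\mathcal F_a^{*}\ \forall a}\ \sum_{\substack{\mathcal F\subseteq M\\ \ker\mathcal F=\varnothing}}\prod_{S\in\mathcal F}w(S).
\]
The first sum is controlled by Proposition~\ref{prop:one-sector}, which gives $\Sigma_a\le T=\tfrac34\cdot2^{M_n^{*}}$; it therefore contributes at most $\tfrac34\,n\cdot2^{M_n^{*}}$. This is exactly where the constant $\tfrac34$ comes from: every kernel-free subfamily of $\mathcal F_a^{*}$ must contain the repair set $[n]\setminus\{a\}$, of weight $3$ against $1+w=4$. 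For the second sum, bound each inner sum crudely by $\prod_{S\in M}(1+w(S))=2^{\Lambda(M)}$. Theorem~\ref{thm:second} then gives $\Lambda(M)\le M_n^{*}-(2^{n-2}-4)$ for every non-extremal $M$, so the remainder is at most
\[
\lambda(n)\cdot2^{M_n^{*}-2^{n-2}+4}.
\]

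The only genuine point is to show that this remainder is $o(n\cdot2^{M_n^{*}})$, i.e.\ that $\log_2\lambda(n)-2^{n-2}+4-\log_2 n\to-\infty$. This is where the entropy count must lose to the second-level gap. Lemma~\ref{lem:MLS-count} gives $\log_2\lambda(n)\le(1+o(1))\binom{n}{\lfloor n/2\rfloor}=O(2^n n^{-1/2})$, which is $o(2^{n-2})$, since the gap $2^{n-2}$ is still a constant fraction of $2^n$. Hence the exponent is $-2^{n-2}(1-O(n^{-1/2}))\to-\infty$, and the remainder is in fact doubly-exponentially small relative to the main term. I expect this comparison to be the main, if mild, obstacle. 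The leading scale uses only the Dedekind/Kleitman bound, so I must make sure the $(1+o(1))$ in Kleitman's theorem is compatible with a gap of order $2^{n-2}$ rather than $2^{n-1}$; since $\binom{n}{\lfloor n/2\rfloor}/2^{n-2}\to0$, it is.

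Combining the two bounds yields $R(n)\le(\tfrac34+o(1))\,n\cdot2^{M_n^{*}}$, and with Proposition~\ref{prop:n-sector} this gives the theorem. Taking $\log_2$ against $Z_\cap(n)\sim n\cdot2^{3^{n-1}}$ from Proposition~\ref{prop:Zcap} immediately gives the additive form recorded in Corollary~\ref{cor:additive}.
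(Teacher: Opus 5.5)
Your proposal is correct and is essentially the paper's proof. The upper bound combines $\sum_a\Sigma_a\le\tfrac34\,n\cdot2^{M_n^{*}}$ for the part carried by the one-flip stars with the union bound $\lambda(n)\,2^{M_n^{*}-(2^{n-2}-4)}=o(2^{M_n^{*}})$ over the remaining kernel-free MLS (Theorem~\ref{thm:second} and Lemma~\ref{lem:MLS-count}), and the lower bound comes from Proposition~\ref{prop:n-sector}; the only cosmetic difference is that you assign each kernel-free family to one containing MLS and split by MLS type, whereas the paper partitions families into those lying in some $\mathcal F_a^{*}$ and those lying in none, which gives the same bounds.
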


\begin{proof}
Decompose $R(n)=R_{\mathrm{top}}+R_{\mathrm{non\text-top}}$, where
$R_{\mathrm{top}}$ is the weighted total of all kernel-free families
contained in at least one extremal sector $\mathcal F_a^{*}$, and
$R_{\mathrm{non\text-top}}$ the weighted total of those in no
$\mathcal F_a^{*}$.

\emph{Top part.} By definition $R_{\mathrm{top}}\le\sum_{a=1}^{n}
\Sigma_a$, and each $\Sigma_a\le T=\tfrac34\cdot2^{M_n^{*}}$
(Proposition~\ref{prop:one-sector}), so $R_{\mathrm{top}}\le
\tfrac34 n\cdot2^{M_n^{*}}$. In the other direction, the
inclusion--exclusion lower bound of Proposition~\ref{prop:n-sector}
gives $R_{\mathrm{top}}\ge\sum_a\Sigma_a-\binom n2 2^{3^{n-2}+4}
=(\tfrac34-o(1))n\cdot2^{M_n^{*}}$ (families counted twice lie in some
$\mathcal F_a^{*}\cap\mathcal F_b^{*}$, of exponent $3^{n-2}+4$). Hence
$R_{\mathrm{top}}=(\tfrac34+o(1))n\cdot2^{M_n^{*}}$.

\emph{Non-top part.} Every kernel-free family not contained in a single
$\mathcal F_a^{*}$ extends to a kernel-free MLS $M$ that is likewise not
any single $\mathcal F_a^{*}$, hence is non-extremal, hence has
$\Lambda(M)\le M_n^{*}-(2^{n-2}-4)$ by Theorem~\ref{thm:second}. Bounding
$R_{\mathrm{non\text-top}}$ by the union bound over all such MLS (the
total subfamily weight of a fixed MLS $M$ is $2^{\Lambda(M)}$),
\[
R_{\mathrm{non\text-top}}\;\le\;\lambda(n)\cdot2^{\,M_n^{*}-(2^{n-2}-4)}
\;=\;2^{M_n^{*}}\cdot\lambda(n)\,2^{-(2^{n-2}-4)}
\;=\;o\!\bigl(2^{M_n^{*}}\bigr),
\]
since $\log_2\lambda(n)=O(2^n/\sqrt n)=o(2^{n-2})$
(Lemma~\ref{lem:MLS-count}). Adding, $R(n)=R_{\mathrm{top}}
+R_{\mathrm{non\text-top}}=(\tfrac34+o(1))n\cdot2^{M_n^{*}}$.
\end{proof}

\begin{corollary}[Additive suppression exponent]\label{cor:additive}
As $n\to\infty$,
\[
\log_2\frac{Z_\cap(n)}{R(n)}
\;=\;2^{\,n-1}-2+\log_2\tfrac43+o(1).
\]
\end{corollary}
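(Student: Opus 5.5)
The plan is to combine the exact prefactor of Theorem~\ref{thm:prefactor} with the closed form for $Z_\cap(n)$ from Proposition~\ref{prop:Zcap}, and then take logarithms. Since $\log_2$ of a ratio of the form $(c+o(1))$ with $c>0$ equals $\log_2 c+o(1)$, an additive $o(1)$ in the exponent is exactly what we get. No new combinatorics is needed: everything reduces to the two asymptotic equivalences already established.

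First, I use Proposition~\ref{prop:Zcap} in the sharpened form
\[
Z_\cap(n)=n\cdot2^{3^{n-1}}\bigl(1+o(1)\bigr).
\]
This comes from the explicit tail bound
\[
\sum_{j\ge2}\binom nj2^{3^{n-j}}\le 2^n2^{3^{n-2}},
\]
which is $o\bigl(n\,2^{3^{n-1}}\bigr)$ by a doubly exponential margin. Second, Theorem~\ref{thm:prefactor} gives
\[
R(n)=\Bigl(\tfrac34+o(1)\Bigr)n\cdot2^{M_n^{*}},
\qquad M_n^{*}=3^{n-1}-2^{n-1}+2 .
\]

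Dividing these two, the factors of $n$ cancel and
\[
\frac{Z_\cap(n)}{R(n)}=\frac{1+o(1)}{\tfrac34+o(1)}\cdot2^{\,3^{n-1}-M_n^{*}}
=\Bigl(\tfrac43+o(1)\Bigr)\,2^{\,2^{n-1}-2}.
\]
Taking $\log_2$, and using that $\log_2$ is continuous at $\tfrac43>0$ so that $\log_2(\tfrac43+o(1))=\log_2\tfrac43+o(1)$, yields the claimed identity.

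There is no genuine obstacle, since all the work sits in Theorem~\ref{thm:prefactor}. The only point needing care is that the $o(1)$ terms are relative errors multiplying a positive constant, not additive errors on a quantity that could tend to $0$. Both sources supply this: the relative error in $Z_\cap$ is at most $2^{n}2^{-2\cdot3^{n-2}}/n$, and the one in $R$ comes from the bounds in the proof of Theorem~\ref{thm:prefactor}. If an explicit rate is wanted, I would track the dominant error term, namely $\lambda(n)\,2^{-(2^{n-2}-4)}$ from $R_{\mathrm{non\text-top}}$. This is $2^{-2^{n-2}(1-o(1))}$ by Lemma~\ref{lem:MLS-count}, so the additive error in the logarithm is in fact doubly exponentially small.
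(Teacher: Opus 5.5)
Your proposal is correct and is exactly the paper's argument: divide $Z_\cap(n)=n\cdot2^{3^{n-1}}(1+o(1))$ (Proposition~\ref{prop:Zcap}) by $R(n)=(\tfrac34+o(1))\,n\cdot2^{M_n^{*}}$ (Theorem~\ref{thm:prefactor}), then take $\log_2$ using $3^{n-1}-M_n^{*}=2^{n-1}-2$. Your closing rate remark is also sound as an \emph{upper} bound on the error, since it is essentially the paper's union-bound term $\varepsilon'_n$; this is consistent with the paper's statement that the true order of the $o(1)$ correction remains open.
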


\begin{proof}
$Z_\cap(n)=n\cdot2^{3^{n-1}}(1+o(1))$ from~\eqref{eq:Zcap-formula} and
$R(n)=(\tfrac34+o(1))n\cdot2^{M_n^{*}}$ from
Theorem~\ref{thm:prefactor}; divide and take logarithms, using
$3^{n-1}-M_n^{*}=2^{n-1}-2$.
\end{proof}

\begin{corollary}[Sharpened Gibbs concentration]\label{cor:gibbs-sharp}
With $\mathbb P_n$ the Gibbs distribution of
Corollary~\ref{cor:gibbs}, the non-condensed probability has the exact
constant
\[
\mathbb P_n(\mathcal F\text{ non-condensed})
=\frac{R(n)}{W(n)}
=(3+o(1))\,2^{-2^{n-1}}.
\]
\end{corollary}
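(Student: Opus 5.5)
The plan is to write $W(n)=Z_\cap(n)+R(n)$ and divide: $\mathbb P_n(\text{non-condensed})=R(n)/W(n)=R(n)/(Z_\cap(n)+R(n))$. Everything needed is already in hand. Theorem~\ref{thm:prefactor} gives $R(n)=(\tfrac34+o(1))\,n\cdot 2^{M_n^{*}}$. Proposition~\ref{prop:Zcap} gives $Z_\cap(n)=(1+o(1))\,n\cdot2^{3^{n-1}}$; its lower bound comes from the Bonferroni inequality~\eqref{eq:bonferroni}, and its upper bound from $Z_\cap(n)\le n\cdot 2^{3^{n-1}}$, the first Bonferroni term of the weighted union.

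First I will show that $R(n)$ is negligible inside $W(n)$. By Theorem~\ref{thm:main}, or directly from the two asymptotics above, we have $R(n)/Z_\cap(n)=(\tfrac34+o(1))\,2^{-(2^{n-1}-2)}\to0$. Hence $W(n)=Z_\cap(n)\bigl(1+o(1)\bigr)=(1+o(1))\,n\cdot 2^{3^{n-1}}$.

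Next I divide:
\[
\frac{R(n)}{W(n)}=\frac{(\tfrac34+o(1))\,n\cdot 2^{3^{n-1}-2^{n-1}+2}}{(1+o(1))\,n\cdot 2^{3^{n-1}}}
=\Bigl(\tfrac34+o(1)\Bigr)\cdot 4\cdot 2^{-2^{n-1}}=(3+o(1))\,2^{-2^{n-1}}.
\]
The factor $n$ cancels and the exponents combine as $M_n^{*}-3^{n-1}=-2^{n-1}+2$, so the constant is $\tfrac34\cdot 2^2=3$.

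No step here is a real obstacle; the substantive input is entirely Theorem~\ref{thm:prefactor}. The one point needing care is that the $o(1)$ terms are multiplicative relative errors. Each numerator and denominator is of the form (constant $+o(1)$) times a main term, and the constants $\tfrac34$ and $1$ are nonzero. So the quotient of the $(1+o(1))$ factors is again $1+o(1)$, and the error in the constant $3$ stays additive $o(1)$, as claimed.
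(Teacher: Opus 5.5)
Your proposal is correct and is essentially the paper's own proof. You obtain $R(n)/Z_\cap(n)=(\tfrac34+o(1))\,2^{-(2^{n-1}-2)}=(3+o(1))\,2^{-2^{n-1}}$ from Theorem~\ref{thm:prefactor} and Proposition~\ref{prop:Zcap}, then pass to $W(n)=Z_\cap(n)(1+o(1))$ using $R(n)/Z_\cap(n)\to0$; that is the same two-step argument.
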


\begin{proof}
$R(n)/Z_\cap(n)=(\tfrac34+o(1))2^{-(2^{n-1}-2)}=(3+o(1))2^{-2^{n-1}}$ by
Theorem~\ref{thm:prefactor}, and $W(n)=Z_\cap(n)(1+o(1))$ since
$R(n)/Z_\cap(n)\to0$.
\end{proof}

This is an \emph{additive} $o(1)$: not merely the leading exponent but
the additive constant $\log_2\tfrac43$ in $\log_2(Z_\cap/R)$ is now
determined. The three quantitative facts behind it -- the extremal
value $M_n^{*}$, the single-sector constant $\tfrac34$, and the
second-level gap $2^{n-2}-4$ -- are all proved above. The quantity
$\varepsilon'_n:=\tfrac{4(\lambda(n)-n)}{3n}2^{-(2^{n-2}-4)}$, where
$\lambda(n)-n$ is the number of non-extremal kernel-free MLS, is the
\emph{upper-bound error term} from the crude union bound over
non-extremal MLS in Theorem~\ref{thm:prefactor}, not the true $o(1)$
correction; using the exact Hosten--Morris values of
$\lambda(n)$~\cite{BMMV2013,PawelskiSzepietowski2023} it is
$1.4\times10^{-1}$ at $n=6$, $1.0\times10^{-3}$ at $n=7$ and
$3.3\times10^{-8}$ at $n=8$, so the union bound becomes effective only
from $n\ge7$. The prefactor is correspondingly invisible at accessible
$n$: exhaustive enumeration gives
$R(5)=832\,931\,815\,524\,315\,163\,686$, that is
$R(5)/(\tfrac34\cdot5\cdot2^{67})\approx1.505$, consistent with
$\varepsilon'_5\approx1.18$. Determining the genuine $o(1)$ would require the
finer $\Lambda$-spectrum of the non-extremal MLS and the overlaps of
their subfamily contributions, not $\lambda(n)$ alone.

\subsection{
$B$-parameter
family}\label{sec:general-B}

A natural objection is that the constants above -- the extremal
exponent $M_n^{*}=3^{n-1}-2^{n-1}+2$, the gap $2^{n-2}-4$, and the
prefactor $\tfrac34$ -- might be artefacts of the specific weight
$w(S)=2^{2^{n-|S|}}-1$. They are not: the entire chain
$\text{C}\Rightarrow\text{A}\Rightarrow\text{B}$ goes through verbatim
for the whole family
\begin{equation}\label{eq:wB}
w_B(S)\;:=\;B^{\,B^{\,n-|S|}}-1,\qquad B\in\mathbb Z_{\ge2},
\end{equation}
of which the paper's weight is the case $B=2$. The point is that the
proofs never use the value $B=2$: they use only that
$1+w_B(S)=B^{\,B^{\,n-|S|}}$ is a power of a fixed base, so that the
subfamily generating function of a maximal linked system $M$ is
$\prod_{S\in M}(1+w_B(S))=B^{\Lambda_B(M)}$ with
\[
\Lambda_B(M):=\sum_{S\in M}B^{\,n-|S|},
\]
and that the per-pair coefficient $c_k^{(B)}=B^{\,n-k}-B^{k}$ is
\emph{positive} for every $k<n/2$ and every $B\ge2$ (since
$n-k>k$). The extremal and rigidity arguments of
\S\ref{sec:main-proof}, and the second-level argument of
\S\ref{sec:secondlevel}, depend only on these two facts.

\begin{theorem}[$B$-parameter version]\label{thm:generalB}
Fix an integer $B\ge2$ and let $Z_\cap^{(B)}(n)$, $R^{(B)}(n)$ be the
trivial and kernel-free parts of $Z(D_n,w_B)$. Then:
\begin{enumerate}[label=\textup{(\roman*)},leftmargin=*]
\item \emph{(Extremal exponent.)} The maximum of $\Lambda_B$ over
kernel-free maximal linked systems is
\[
M_n^{*}(B)\;=\;(B+1)^{n-1}-B^{n-1}+B,
\]
attained, for $n\ge5$, exactly by the $n$ one-flip stars $\mathcal
F_a^{*}$; for $n=4$ by those four together with the four triangle
systems of Theorem~\ref{thm:main-mu}(ii), all eight of profile
$(0,0,3,4,1)$ and hence of exponent $3B^{2}+4B+1=M_4^{*}(B)$; and for
$n=3$ the three $\mathcal F_a^{*}$ coincide. For $B=2$ the value is
$3^{n-1}-2^{n-1}+2$.
\item \emph{(Second-level gap.)} For $n\ge5$, every kernel-free maximal
linked system other than the $\mathcal F_a^{*}$ has
$\Lambda_B(M)\le M_n^{*}(B)-\bigl(B^{\,n-2}-B^{2}\bigr)$, with the same
equality set as in Theorem~\ref{thm:second} \emph{(the punctured-star systems there, and the balls $\mathcal A_T$
at $n=5$)}, independent of $B$;
for $B=2$ the gap is $2^{n-2}-4$.
\item \emph{(Exact prefactor.)} As $n\to\infty$,
\[
R^{(B)}(n)=\bigl(1-B^{-B}+o(1)\bigr)\,n\cdot B^{\,M_n^{*}(B)},
\qquad
\log_B\frac{Z_\cap^{(B)}(n)}{R^{(B)}(n)}
=B^{\,n-1}-B+\log_B\frac{B^{B}}{B^{B}-1}+o(1);
\]
for $B=2$ the constant is $1-2^{-2}=\tfrac34$ and
$\log_2\tfrac43$.
\end{enumerate}
\end{theorem}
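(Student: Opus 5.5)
The plan is to rerun the chain C$\Rightarrow$A$\Rightarrow$B with base $B$ in place of $2$. The key observation is that $\Lambda_B(\mathcal F)=(B+1)^n\mu_{p}(\mathcal F)$ with $p=1/(B+1)\le\tfrac13<\tfrac12$, since $(B+1)^n p^{|S|}q^{n-|S|}=B^{n-|S|}$.

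\emph{Part (i).} I will apply Theorem~\ref{thm:main-mu} at $p=1/(B+1)$, which is Corollary~\ref{cor:qary} with $Q=B+1\ge3$. Multiplying by $(B+1)^n$ gives
\[
M_n^{*}(B)=(B+1)^{n-1}-B^{n-1}+B .
\]
The equality classification for $n\ge5$, $n=4$ and $n=3$ transfers verbatim, because $p<\tfrac12$. As a check at $n=4$, the profile $(0,0,3,4,1)$ gives $3B^2+4B+1$, and this equals $(B+1)^3-B^3+B$.

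\emph{Part (ii).} I will redo the proof of Theorem~\ref{thm:second} with the $B$-profile identity, which is Lemma~\ref{lem:profile} with coefficients $c_k^{(B)}=B^{n-k}-B^k>0$ for $k<n/2$. The deficit from the one-flip star is $\sum_{2\le k<n/2}\bigl(\binom{n-1}{k-1}-f_k\bigr)c_k^{(B)}$. This is equivalently Corollary~\ref{cor:second-mu} at $p=1/(B+1)$, where $(B+1)^n h_2(p)=B^{n-2}-B^2$. Lemma~\ref{lem:rigidity} forces $f_2\le n-2$, and this gives the gap $B^{n-2}-B^2$. The equality analysis depends only on the sign of the coefficients, not on their values: it forces $f_2=n-2$ and EKR-extremality at all other layers. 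Lemma~\ref{lem:reconstruct} and the triangle case at $n=5$ then yield the same equality set, independently of $B$.

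\emph{Part (iii).}

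\emph{Trivial part.} The inclusion--exclusion of Proposition~\ref{prop:Zcap} works unchanged with $\prod_{S\supseteq J}(1+w_B(S))=B^{(B+1)^{n-|J|}}$. This gives $Z_\cap^{(B)}(n)\sim nB^{(B+1)^{n-1}}$, since the $j\ge2$ tail is at most $2^nB^{(B+1)^{n-2}}$, which is negligible.

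\emph{Single sector.} In Proposition~\ref{prop:one-sector} the repair set $A=[n]\setminus\{a\}$ has $w_B(A)=B^B-1$. The forced-$A$ total is therefore $T=(1-B^{-B})B^{M_n^{*}(B)}$. The correction terms, with $j\in\ker$, have exponent $\Lambda_B(\mathcal C_j)=2\cdot... $, namely $\Lambda_B(\mathcal C_j)=B(B+1)^{n-2}-B^{n-1}$, which tends to infinity. So $\Sigma_a=(1-B^{-B})(1-o(1))B^{M_n^{*}(B)}$.

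\emph{Overlaps.} Pairwise overlaps $\mathcal F_a^{*}\cap\mathcal F_b^{*}$ have $\Lambda_B=(B+1)^{n-2}+2B$. This is doubly-exponentially smaller than $B^{M_n^{*}(B)}$ even after multiplying by $\binom n2$, so the Bonferroni step of Proposition~\ref{prop:n-sector} goes through.

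\emph{Non-top part.} As in Theorem~\ref{thm:prefactor}, the non-top families are bounded by $\lambda(n)B^{M_n^{*}(B)-(B^{n-2}-B^2)}$. Since $\log_B\lambda(n)=O(2^n/\sqrt n)$ by Lemma~\ref{lem:MLS-count}, it remains to check that $2^n/\sqrt n=o(B^{n-2})$.

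\emph{Main obstacle.} This check is the one step that is not purely mechanical. For $B\ge3$ it is immediate. For $B=2$ it is exactly the already-proved case, where $2^n/\sqrt n=o(2^{n-2})$ holds because of the $\sqrt n$ factor. Since $B^{n-2}\ge2^{n-2}$ for every $B\ge2$, it holds uniformly.

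Dividing the two asymptotics gives
\[
\log_B\frac{Z_\cap^{(B)}(n)}{R^{(B)}(n)}=(B+1)^{n-1}-M_n^{*}(B)+\log_B\frac{B^B}{B^B-1}+o(1)=B^{n-1}-B+\log_B\frac{B^B}{B^B-1}+o(1).
\]
The remaining work is bookkeeping: confirming that no step of the original proofs used the value $2$ beyond these two facts.
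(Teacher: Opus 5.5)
Your proposal is correct and follows essentially the paper's own route. Both arguments specialise to $p=1/(B+1)$ through $\Lambda_B=(B+1)^n\mu_p$ and invoke Theorem~\ref{thm:main-mu} and Corollary~\ref{cor:second-mu} for (i)--(ii). For (iii) both rerun the prefactor argument with $w_B(A)=B^B-1$, overlap exponent $(B+1)^{n-2}+2B$, and $\log_B\lambda(n)\le\log_2\lambda(n)=o(B^{n-2})$. The only difference is cosmetic: you bound the single-sector kernel-bearing correction via $\Lambda_B(\mathcal C_j)=B(B+1)^{n-2}-B^{n-1}$, while the paper bounds it directly by $(n-1)w_B(A)B^{(B+1)^{n-2}}$, which is the same computation seen from the complement.
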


\noindent\emph{Attribution.} Part~(i) is, via Corollary~\ref{cor:qary}
at $Q=B+1$, equivalent to the case $r=n$ of Borg's
theorem~\cite[Theorem~2.3]{Borg2013}, and is included with an
independent proof for completeness; parts~(ii) and~(iii) have no
counterpart in the product-model literature that we know of.

\begin{proof}
Throughout put $p:=1/(B+1)$ and $q=1-p=B/(B+1)$, so that
$0<p\le\tfrac13<\tfrac12$, and observe the specialisation identity
\begin{equation}\label{eq:specialise}
\Lambda_B(\mathcal F)
=\sum_{S\in\mathcal F}B^{\,n-|S|}
=(B+1)^{n}\sum_{S\in\mathcal F}p^{|S|}q^{\,n-|S|}
=(B+1)^{n}\,\mu_p(\mathcal F)
\end{equation}
for every family $\mathcal F$, valid because
$p^{|S|}q^{\,n-|S|}(B+1)^{n}=B^{\,n-|S|}$. Every statement proved about
$\mu_p$ at the single point $p=1/(B+1)$ therefore transfers to
$\Lambda_B$ upon multiplication by $(B+1)^{n}$; nothing below is argued
by analogy.

\emph{(i)} Theorem~\ref{thm:main-mu} at $p=1/(B+1)$ (equivalently,
Corollary~\ref{cor:qary} with $Q=B+1\ge3$) gives
$\max\mu_p=p-pq^{\,n-1}+qp^{\,n-1}$ over kernel-free MLS, attained
exactly by the $n$ one-flip stars, uniquely so for $n\ge5$ since
$p<\tfrac12$. Multiplying by $(B+1)^{n}$ term by term,
$(B+1)^{n}p=(B+1)^{n-1}$, $(B+1)^{n}pq^{\,n-1}=B^{\,n-1}$ and
$(B+1)^{n}qp^{\,n-1}=B$, whence
$M_n^{*}(B)=(B+1)^{n-1}-B^{\,n-1}+B$.

\emph{(ii)} Corollary~\ref{cor:second-mu} holds for \emph{every}
$0<p<\tfrac12$ and $n\ge5$, hence at $p=1/(B+1)$: over kernel-free MLS
other than the $\mathcal F_a^{*}$ the maximum of $\mu_p$ is
$p-pq^{\,n-2}+qp^{\,n-2}$, attained exactly by the punctured-star systems of
Theorem~\ref{thm:second} and, when $n=5$, additionally by the balls
$\mathcal A_T$. The equality set is the same for every
$p\in(0,\tfrac12)$, hence for every $B$ -- the asserted
$B$-independence. Multiplying by $(B+1)^{n}$, with
$(B+1)^{n}pq^{\,n-2}=(B+1)B^{\,n-2}$ and
$(B+1)^{n}qp^{\,n-2}=B(B+1)$, the second level of $\Lambda_B$ is
$(B+1)^{n-1}-(B+1)B^{\,n-2}+B(B+1)$, and
\[
M_n^{*}(B)-\Bigl[(B+1)^{n-1}-(B+1)B^{\,n-2}+B(B+1)\Bigr]
=(B+1)B^{\,n-2}-B^{\,n-1}-B^{2}
=B^{\,n-2}-B^{2}.
\]
For $B=2$ the gap is $2^{\,n-2}-4$.

\emph{(iii)} We re-run the proof of Theorem~\ref{thm:prefactor}; its
only weight-dependent inputs are three explicit quantities, computed
here for general $B$. \emph{Generating function:} for any MLS $M$,
$\prod_{S\in M}(1+w_B(S))=B^{\Lambda_B(M)}$, since
$1+w_B(S)=B^{\,B^{\,n-|S|}}$. \emph{Single sector:} a kernel-free
$\mathcal F\subseteq\mathcal F_a^{*}$ must contain
$A=[n]\setminus\{a\}$, and the total weight of subfamilies of
$\mathcal F_a^{*}$ containing $A$ is
\[
w_B(A)\!\!\prod_{S\in\mathcal F_a^{*}\setminus\{A\}}\!\!(1+w_B(S))
=\frac{w_B(A)}{1+w_B(A)}\,B^{\,M_n^{*}(B)}
=\bigl(1-B^{-B}\bigr)B^{\,M_n^{*}(B)},
\]
because $w_B(A)=B^{B}-1$. As in Proposition~\ref{prop:one-sector}, its
kernel-bearing part is at most
$(n-1)\,w_B(A)\,B^{(B+1)^{n-2}}$, since the members of
$\mathcal F_a^{*}\setminus\{A\}$ containing a fixed $j\ne a$ carry
$\Lambda_B$-total $\sum_{S\supseteq\{a,j\}}B^{\,n-|S|}=(B+1)^{n-2}$;
relative to the sector total this is
$O\bigl(n\,B^{-B[(B+1)^{n-2}-B^{\,n-2}]}\bigr)=o(1)$. Hence
$\Sigma_a^{(B)}=\bigl(1-B^{-B}\bigr)\bigl(1-o(1)\bigr)B^{\,M_n^{*}(B)}$.
\emph{Overlaps:} $\Lambda_B(\mathcal F_a^{*}\cap\mathcal F_b^{*})
=\sum_{S\supseteq\{a,b\}}B^{\,n-|S|}+2B=(B+1)^{n-2}+2B$ (the two
co-singletons contribute $B$ each), so inclusion--exclusion over the
$n$ sectors loses at most
$\binom n2\,B^{(B+1)^{n-2}+2B}=o\bigl(B^{M_n^{*}(B)}\bigr)$, giving
$R^{(B)}_{\mathrm{top}}=\bigl(1-B^{-B}+o(1)\bigr)\,n\,B^{\,M_n^{*}(B)}$.
For the non-top part, part~(ii) and the union bound over non-extremal
kernel-free MLS give
\[
R^{(B)}_{\mathrm{non\text-top}}
\;\le\;\lambda(n)\,B^{\,M_n^{*}(B)-(B^{\,n-2}-B^{2})}
\;=\;o\bigl(B^{\,M_n^{*}(B)}\bigr),
\]
since $\log_B\lambda(n)\le\log_2\lambda(n)=O(2^{n}/\sqrt n)
=o(2^{\,n-2})\le o(B^{\,n-2})$ (Lemma~\ref{lem:MLS-count}). Adding,
$R^{(B)}(n)=\bigl(1-B^{-B}+o(1)\bigr)\,n\,B^{\,M_n^{*}(B)}$. Finally,
the inclusion--exclusion of Proposition~\ref{prop:Zcap} applies
verbatim with $\prod_{S\ni i}(1+w_B(S))=B^{(B+1)^{n-1}}$, giving
\[
Z_\cap^{(B)}(n)=\sum_{j=1}^{n}(-1)^{j+1}\binom nj\,B^{(B+1)^{n-j}}
=n\,B^{(B+1)^{n-1}}(1+o(1));
\]
dividing and using $(B+1)^{n-1}-M_n^{*}(B)=B^{\,n-1}-B$ yields
\[
\log_B\frac{Z_\cap^{(B)}(n)}{R^{(B)}(n)}
=B^{\,n-1}-B+\log_B\frac{B^{B}}{B^{B}-1}+o(1).
\] For $B=2$ the constant
is $1-2^{-2}=\tfrac34$ and $\log_2\tfrac43$. All closed forms were
additionally verified against exhaustive enumeration for
$B\in\{2,3,4,5\}$ and $n\in\{5,6\}$ (Appendix~\ref{app:code}).
\end{proof}

Thus the phenomenon is structural, not arithmetic: for the whole family
\eqref{eq:wB}, the same $n$ stars dominate, the same near-extremal
families sit a fixed gap below, and the suppression prefactor is the
explicit $1-B^{-B}$. The paper's weight $B=2$ is the smallest and
sharpest case, but nothing in the argument privileges it.

\section{Concluding remarks}\label{sec:concl}

The doubly exponential weight~\eqref{eq:weight} concentrates $W(n)$
sharply: the $n$ stars attain the global maximum of $\mu_{p}$ among
intersecting families, the one-flip stars $\mathcal F_a^{*}$ are the
cheapest kernel-free configurations overall by
Theorem~\ref{thm:main-mu} (with the cost within the natural one-defect
family made explicit in Proposition~\ref{prop:defect}), and
Theorem~\ref{thm:main} shows that $W(n)$ is dominated by the $n$ stars
with doubly exponential suppression of all other configurations.

For the entry $r=2$, $p\le\tfrac12$, in the table of the quantities
$M_r(n,p)$, the closed form and the maximiser classification go back,
at the points $p=1/Q$, to Borg~\cite{Borg2013} (see
also~\cite{KwanSudakovVieira2018,FranklNie2026,HouHu2026}), with the
extremal family known earlier still as the Wheel coterie in the
availability theory of Peleg and Wool~\cite{PelegWool1995};
Theorem~\ref{thm:main-mu} records the statement in continuous $p$,
with an elementary proof. The other known entries of the table are due
to Brace--Daykin ($p=\tfrac12$)~\cite{BraceDaykin1971},
Frankl--Tokushige and Tokushige ($r\ge8$, $p$ near
$\tfrac12$)~\cite{FranklTokushige2006,Tokushige2008}, and Tokushige
($r=3$)~\cite{Tokushige2024}. Very recent work of
Wu and Feng approaches Tokushige's general $r$-wise conjecture via
random partitions~\cite{WuFeng2026rwise}. Ahlswede and Khachatrian
also determined the maximum \emph{non-trivial} $t$-intersecting
families exactly~\cite{AhlswedeKhachatrian1996}; porting that
classification to the $\mu_p$ setting -- extending
Theorem~\ref{thm:main-mu} from $t=1$ to all $t\ge2$, in the spirit of
Filmus's $\mu_p$ version of the complete intersection
theorem~\cite{Filmus2017} -- is in our view the most natural
continuation of the present work.



Natural directions for further work:

\begin{enumerate}[label=(\arabic*),leftmargin=*]
\item \emph{The full $\Lambda$-spectrum.} Theorem~\ref{thm:second}
determines the top two levels of the $\Lambda$-spectrum of kernel-free
MLS exactly (values $M_n^{*}$ and $M_n^{*}-(2^{n-2}-4)$, with no MLS
between them). Exhaustive enumeration gives the third level
$M_n^{*}-2(2^{n-2}-4)$ for $n=5,6,7$ (values $59$, $189$, $611$). For
$n\le6$ the entire top of the spectrum is arithmetic with step
$2^{n-2}-4$, namely $67,63,59,55,51$ and $213,201,189,177,165,153$;
this is a small-$n$ coincidence, since at $n=7$ the spectrum begins
$667,639,611,603,583,575$ and the step is not constant. Determining the
third level in general, and whether the local gaps are uniformly
bounded, is open.
\item \emph{The $o(1)$ rate in the prefactor.} Theorem~\ref{thm:prefactor}
and Corollary~\ref{cor:additive} determine the prefactor $\tfrac34$ and
the additive constant $\log_2\tfrac43$ exactly. The genuine $o(1)$
correction is not simply a function of $\lambda(n)$: it depends on the
whole $\Lambda$-spectrum of the non-extremal MLS and on the overlaps of
their subfamily contributions. Determining it -- even its order -- is
open, and would in particular require going beyond the second level
established in Theorem~\ref{thm:second}.
\item \emph{Higher $r$ and $t$.} The analogues of
Theorem~\ref{thm:main-mu} for non-trivial $r$-wise intersecting
families, $r\ge 4$, and for $t\ge 2$, remain open away from
$p=\tfrac12$; cf.~\cite{Tokushige2024} for $r=3$, the
counterexamples there to a conjecture of O'Neill and
Verstra\"ete~\cite{ONeillVerstraete2021}, and the $r$-cross
machinery of~\cite{GMPS}.
\item \emph{Stability.} For $\tfrac13\le p\le\tfrac12$ Tokushige proves
stability of the optimal structure for $r=3$; a quantitative stability
version of Theorem~\ref{thm:main-mu} (families of measure close to
$M_2(n,p)$ are close to a one-flip star) would sharpen the
enumerative application.
\end{enumerate}

\subsection*{Acknowledgements}
The author is grateful to A.~A.~Esin for discussions at an early
stage of this project, in particular in connection with the
three-valued-logic problem (Appendix~\ref{app:logic}) that motivated
the weight~\eqref{eq:weight}.

\subsection*{Declarations}
\noindent\textbf{Competing interests.} The author declares that she
has no competing interests.

\noindent\textbf{Declaration of generative AI and AI-assisted
technologies in the writing process.}
During the preparation of this work the author used a large language
model (Claude, Anthropic) to translate the manuscript from
Russian into English, to refine the English style and grammar, to
assist with \LaTeX{} formatting, and to draft
the auxiliary verification scripts reproduced in
Appendix~\ref{app:code}. After using this tool, the author reviewed,
edited and independently verified all content and takes full
responsibility for the content of the publication. The mathematical
results -- definitions, statements and proofs -- are due to the
author; all numerical data and bibliographic records were re-checked
by the author.

\smallskip\noindent\textbf{Funding.} This work was carried out within
the framework of the state assignment of the Institute for Information
Transmission Problems of the Russian Academy of Sciences (Kharkevich
Institute), project no.\ FFNU-2025-0029.

\smallskip\noindent\textbf{Data availability.} The numerical values in
Tables~\ref{tab:data}--\ref{tab:spectrum} and the extremal and spectral
enumerations for $n\le 6$ are reproducible from the script in
Appendix~\ref{app:code}.

\appendix

\section{A remark on the origin of the weight}\label{app:logic}

The specific weight $w(S)=2^{2^{n-|S|}}-1$ in~\eqref{eq:weight} is
\emph{motivated} by an enumeration problem in three-valued logic: in
the study of $\mathcal R_1$-closed subclasses of the layer
$T_{01}$~\cite{Kalimulina2022,Esin2008,Esin2024}, the quantity
$2^{n-|S|}$ counts the Boolean inputs lying over a ``fault-support''
$S\subseteq[n]$, and a pairwise-intersection condition on
fault-supports appears as a closure constraint. A precise statement
and proof of a correspondence between such subclasses and the
weighted count $W(n)$ is deferred to a separate paper; \textbf{we make
no such claim here, and nothing in the present paper depends on it}.
This remark is recorded only to indicate why this particular doubly
exponential weight, rather than an arbitrary one, seemed worth
studying.

\section{Exact numerical data}\label{sec:data}

Exact values of $W(n)$, $Z_\cap(n)$ and $R(n)$ are obtained as
follows. For $n\le 4$ all subfamilies of
$2^{[n]}\setminus\{\varnothing\}$ are enumerated exhaustively. The
total number of intersecting families is modest:
$|\Intn|=2,\,6,\,40,\,1376,\,1\,314\,816$ for $n=1,\dots,5$
(OEIS A051185); for $n=5$ this count, and the remainder $R(5)$, are
computed in seconds as weighted counts of independent sets of the
disjointness graph on the $31$ nonempty subsets of $[5]$ (for $R(5)$,
restricted to $L_{\ge 2}$ with the kernel-free condition handled by
inclusion--exclusion over the possible kernel element; any non-trivial
intersecting family avoids singletons by the argument of
Lemma~\ref{lem:singleton}). All values, as well as the extremal
enumerations behind Theorem~\ref{thm:main-mu} for $n\le 5$, are
reproducible by the script in Appendix~\ref{app:code}.

\begin{table}[ht]
\centering
\caption{Decomposition $W(n)=Z_\cap(n)+R(n)$ and the exponent gap.}
\label{tab:data}
\medskip
\begin{tabular}{@{}crrcc c@{}}
\toprule
$n$ & $Z_\cap(n)$ & $R(n)$ & $R/Z_\cap$
& $\log_2(Z_\cap/R)$ & $2^{n-1}\!-\!2$ (benchmark) \\
\midrule
$1$ & $2$ & $0$ & $0$ & -- & $-1$ \\
$2$ & $14$ & $0$ & $0$ & -- & $0$ \\
$3$ & $1\,514$ & $54$ & $0.036$ & $4.8$ & $2$ \\
$4$ & $536\,867\,870$ & $13\,158\,882$ & $0.025$ & $5.35$ & $6$ \\
$5$ & $\approx 1.21\!\times\! 10^{25}$
    & $\approx 8.33\!\times\! 10^{20}$ & $6.89\!\times\! 10^{-5}$
    & $13.83$ & $14$ \\
\bottomrule
\end{tabular}
\end{table}

Exact values for $n=5$:
\begin{align*}
Z_\cap(5)&=12\,089\,258\,196\,146\,290\,404\,889\,562,\\
R(5)&=832\,931\,815\,524\,315\,163\,686.
\end{align*}
The last column is the exponent benchmark of~\eqref{eq:rate} without
the multiplicity correction. The observed finite-$n$ gaps in the table are still
\emph{below} the asymptotic benchmark $2^{n-1}-2$ (e.g.\ $5.35<6$ at
$n=4$, $13.83<14$ at $n=5$): at small $n$ the system has not yet
reached the asymptotic additive regime, the prefactor and second-level
corrections not being negligible; asymptotically,
Corollary~\ref{cor:additive} gives $\log_2(Z_\cap/R)=2^{n-1}-2+
\log_2\tfrac43+o(1)$, and the second-level gap of
Theorem~\ref{thm:second} is exactly what makes the crude multiplicity
factor $\lambda(n)/(n-1)$ in~\eqref{eq:rate} asymptotically harmless:
most kernel-free MLS carry exponent at most $M_n^{*}-(2^{n-2}-4)$, not
$M_n^{*}$.

\begin{table}[ht]
\centering
\caption{The top of the $\Lambda$-spectrum of kernel-free MLS
(exponent value $^{\times}$ multiplicity), from exhaustive enumeration.
The maximum is $M_n^{*}$ and the second level is $M_n^{*}-(2^{n-2}-4)$,
matching Theorem~\ref{thm:second}; the two are separated by a gap with
no MLS inside it.}
\label{tab:spectrum}
\medskip
\begin{tabular}{@{}cll@{}}
\toprule
$n$ & $M_n^{*}$, second level & top $\Lambda$-values $^{\times}$ mult. \\
\midrule
$4$ & $21,\ (\text{no second level})$ & $21^{\times 8}$ \\
$5$ & $67,\ 63$ & $67^{\times5},\,63^{\times30},\,59^{\times30},\,55^{\times10}$ \\
$6$ & $213,\ 201$ & $213^{\times6},\,201^{\times30},\,189^{\times140},\,177^{\times480}$ \\
\bottomrule
\end{tabular}
\end{table}

\begin{example}[$n=3$]
The only kernel-free MLS on $[3]$ is
$M=\{\{1,2\},\{2,3\},\{1,3\},\{1,2,3\}\}$, the one-flip star
$\mathcal F_1^{*}$, with $\Lambda(M)=2+2+2+1=7=3^2-2^2+2$. The two
non-trivial intersecting families on $[3]$ are $M$ and
$M\setminus\{[3]\}$, each of weight $27$, giving $R(3)=54$.
\end{example}

\begin{example}[$n=4$: the exceptional extremal set]
The eight kernel-free MLS on $[4]$ all attain
$\Lambda=21=3^3-2^3+2$: the four one-flip stars and the four triangle
systems $\mathcal T_B$, $B\in\binom{[4]}{3}$. This is the case
$n=4$ of Theorem~\ref{thm:main-mu}(ii), verified independently by
exhaustive enumeration (Appendix~\ref{app:code}).
\end{example}

\section{Verification script}\label{app:code}

The following Python script verifies the extremal enumerations of
Theorem~\ref{thm:main-mu} for $n\le 5$ (in particular the eight
maximisers at $n=4$ and the uniqueness at $n=5$), the values
$|\mathrm{Int}_n|$ for $n\le 5$, and the full decomposition
$W(n)=Z_\cap(n)+R(n)$ of Table~\ref{tab:data}: exhaustively for
$n\le 4$, and $R(5)$ via a weighted independent-set recursion on
$L_{\ge 2}$ combined with inclusion--exclusion over the kernel
element.

{\small
\begin{verbatim}
from itertools import combinations, product
from fractions import Fraction

def all_MLS(n):
    full = frozenset(range(n)); pairs, seen = [], set()
    for r in range(1, n):
        for c in combinations(range(n), r):
            s = frozenset(c)
            if s not in seen and (full-s) not in seen:
                pairs.append((s, full-s)); seen |= {s, full-s}
    out = []
    for ch in product([0,1], repeat=len(pairs)):
        fam = [p[c] for p, c in zip(pairs, ch)]
        if all(a & b for i,a in enumerate(fam) for b in fam[i+1:]):
            out.append(frozenset(fam) | {full})
    return out

def ker(F):
    k = None
    for s in F: k = s if k is None else k & s
    return k

def mu(F, p, n):
    return sum(p**len(s)*(1-p)**(n-len(s)) for s in F)

for n in [3,4,5]:
    kf = [m for m in all_MLS(n) if not ker(m)]
    for p in [Fraction(1,5), Fraction(1,3), Fraction(2,5)]:
        bnd = p - p*(1-p)**(n-1) + (1-p)*p**(n-1)
        ext = [m for m in kf if mu(m,p,n) == bnd]
        assert all(mu(m,p,n) <= bnd for m in kf)
        print(n, p, "kernel-free:", len(kf), "extremal:", len(ext))
# expected: n=3 -> 1/1; n=4 -> 8 of 8; n=5 -> 5 of 76 (for p<1/2)

# |Int_n| as independent sets of the disjointness graph
def count_int(n):
    V = [m for m in range(1, 1 << n)]
    nbr = [sum(1 << j for j,b in enumerate(V) if b != a and a & b == 0)
           for a in V]
    memo = {}
    def cnt(av):
        if av == 0: return 1
        if av in memo: return memo[av]
        best = max((bin(nbr[v] & av).count("1"), v)
                   for v in range(len(V)) if av >> v & 1)
        d, v = best
        if d == 0: r = 1 << bin(av).count("1")
        else: r = cnt(av & ~(1 << v)) + cnt(av & ~(1 << v) & ~nbr[v])
        memo[av] = r; return r
    return cnt((1 << len(V)) - 1)

print([count_int(n) for n in range(1, 6)])
# expected: [2, 6, 40, 1376, 1314816]   (OEIS A051185)
\end{verbatim}
}

The second part recomputes Table~\ref{tab:data}.

{\small
\begin{verbatim}
from math import comb

def Zcap(n):
    return sum((-1)**(j+1)*comb(n,j)*2**(3**(n-j)) for j in range(1,n+1))

def W_brute(n):  # exhaustive, n <= 4
    sets = [frozenset(c) for r in range(1,n+1)
            for c in combinations(range(n),r)]
    tot = 0
    for mask in range(1 << len(sets)):
        fam = [sets[i] for i in range(len(sets)) if mask >> i & 1]
        if all(a & b for i,a in enumerate(fam) for b in fam[i+1:]):
            prod = 1
            for S in fam: prod *= 2**(2**(n-len(S))) - 1
            tot += prod
    return tot

def R_count(n):  # weighted independent sets on L>=2, minus kernel part
    V = [frozenset(c) for r in range(2,n+1)
         for c in combinations(range(n),r)]
    wt = [2**(2**(n-len(S)))-1 for S in V]
    nbr = [sum(1 << j for j, b in enumerate(V) if j != i and not a & b)
           for i, a in enumerate(V)]
    memo = {}
    def A(av):
        if av == 0: return 1
        if av in memo: return memo[av]
        d,v = max((bin(nbr[u]&av).count("1"),u)
                  for u in range(len(V)) if av>>u&1)
        if d == 0:
            r, u = 1, av
            while u:
                b = (u & -u).bit_length()-1; r *= 1+wt[b]; u &= u-1
        else:
            r = A(av & ~(1<<v)) + wt[v]*A(av & ~(1<<v) & ~nbr[v])
        memo[av] = r; return r
    K = sum((-1)**(j+1)*comb(n,j)
            * 2**(3**(n-j) - (2**(n-1) if j == 1 else 0))
            for j in range(1, n+1))
    return A((1<<len(V))-1) - K

for n in [3,4]:
    assert W_brute(n) == Zcap(n) + R_count(n)
    print(n, Zcap(n), R_count(n))
print(5, Zcap(5), R_count(5))
# expected: (3, 1514, 54); (4, 536867870, 13158882);
# (5, 12089258196146290404889562, 832931815524315163686)
\end{verbatim}
}

The third part enumerates all kernel-free maximal linked systems by a
depth-first choice over complementary pairs and computes the
$\Lambda$-spectra of Table~\ref{tab:spectrum}, confirming the second
level of Theorem~\ref{thm:second} and that no MLS lies in the gap.

{\small
\begin{verbatim}
from itertools import combinations
from collections import Counter

def enumerate_MLS(n):
    full = (1 << n) - 1
    reps = [S for S in range(1, full) if S & 1]        # one per complementary pair
    reps.sort(key=lambda S: min(bin(S).count("1"), n - bin(S).count("1")))
    out, chosen = [], []
    def dfs(i):
        if i == len(reps):
            out.append(tuple(chosen)); return
        for X in (reps[i], full ^ reps[i]):
            if all(X & Y for Y in chosen):
                chosen.append(X); dfs(i + 1); chosen.pop()
    dfs(0); return out, full

def Lam(M, n):  return 1 + sum(2**(n - bin(S).count("1")) for S in M)   # +1 for [n]
def ker(M, full):
    k = full
    for S in M: k &= S
    return k

for n in (4, 5, 6):
    sols, full = enumerate_MLS(n)
    kf = [M for M in sols if ker(M, full) == 0]
    spec = Counter(Lam(M, n) for M in kf)
    Mstar  = 3**(n-1) - 2**(n-1) + 2
    second = 3**(n-1) - 3*2**(n-2) + 6
    top = sorted(spec.items(), reverse=True)[:4]
    print(f"n={n}: M*={Mstar}, 2nd={second}, top spectrum={top}")
    assert top[0][0] == Mstar
    if n >= 5:
        assert sorted(spec, reverse=True)[1] == second   # nothing strictly between
# expected: n=5 -> (67:5, 63:30, 59:30, 55:10);
#           n=6 -> (213:6, 201:30, 189:140, 177:480)

# B-parameter generalisation (Theorem: extremal value, gap, prefactor constant)
def LamB(M, n, B):
    return 1 + sum(B**(n - bin(S).count("1")) for S in M)
for B in (2, 3, 4, 5):
    for n in (5, 6):
        sols, full = enumerate_MLS(n)
        kf = [M for M in sols if ker(M, full) == 0]
        stars = [M for M in kf if sum(1 for S in M if bin(S).count("1") == 2) == n - 1]
        star_val = LamB(stars[0], n, B)
        nonstar = [M for M in kf if sum(1 for S in M if bin(S).count("1") == 2) <= n - 2]
        second_val = max(LamB(M, n, B) for M in nonstar)
        assert star_val == (B + 1)**(n - 1) - B**(n - 1) + B     # extremal value
        assert star_val - second_val == B**(n - 2) - B**2        # gap
    # single-sector prefactor constant 1 - B^{-B}
    from fractions import Fraction as Fr
    assert Fr(B**B - 1, B**B) == 1 - Fr(1, B**B)
print("B-generalisation verified: M*(B)=(B+1)^(n-1)-B^(n-1)+B, gap=B^(n-2)-B^2,",
      "prefactor 1-B^-B, for B=2..5")

# Profile-level optimisation (Remark on the profile polytope): the top two DISTINCT
# Lambda-values over PROFILES of kernel-free MLS are exactly M* and M*-(2^{n-2}-4),
# and the second value is attained by a single profile (realised by two family types
# at n=5). This confirms the inequality is profile-level, the classification is not.
from collections import defaultdict
for n in (5, 6):
    sols, full = enumerate_MLS(n)
    kf = [M for M in sols if ker(M, full) == 0]
    prof_lam = defaultdict(set)
    for M in kf:
        pr = tuple(sum(1 for S in M if bin(S).count("1") == k) for k in range(1, n))
        prof_lam[pr].add(1 + sum(2**(n - bin(S).count("1")) for S in M))
    vals = sorted({next(iter(v)) for v in prof_lam.values()}, reverse=True)
    Mstar = 3**(n-1) - 2**(n-1) + 2
    assert vals[0] == Mstar and vals[1] == 3**(n-1) - 3*2**(n-2) + 6
    n_second = sum(1 for v in prof_lam.values() if next(iter(v)) == vals[1])
    print(f"n={n}: top profile-Lambda values {vals[:3]}, "
          f"second attained by {n_second} profile(s)")
\end{verbatim}
}

\section{A parity obstruction to naive layer-by-layer bounds}\label{sec:supplementary}

\subsection{A parity phenomenon for the layer-by-layer Hilton--Milner
bound}

It is natural to ask whether a layer-by-layer Hilton--Milner bound
could replace the MLS analysis. Define
\[
E_{\HM}(n)\;:=\;
\sum_{k=3}^{\lfloor n/2\rfloor}\HM(n,k)\cdot 2^{n-k}
+\sum_{k=\lfloor n/2\rfloor+1}^{n}\binom{n}{k}\cdot 2^{n-k},
\]
where $\HM(n,k)=\binom{n-1}{k-1}-\binom{n-k-1}{k-1}+1$ is the
Hilton--Milner bound~\cite{HiltonMilner1967}. Any intersecting
$\mathcal F\subseteq L_{\ge 3}$ that is non-trivial in each uniform
layer satisfies $\Lambda(\mathcal F)\le E_{\HM}(n)$; note that the
extremal families of Theorem~\ref{thm:main-mu} are \emph{not} of this
kind (their small layers are stars), so even a uniform bound
$E_{\HM}(n)<3^{n-1}$ would not recover~\eqref{eq:M-star}. However,
the inequality $E_{\HM}(n)<3^{n-1}$ itself fails for all large $n$:

\begin{remark}[Parity phenomenon]\label{rem:parity}
(i) \emph{Numerically}, the inequality $E_{\HM}(n)<3^{n-1}$ first fails
at $n=29$ among odd $n$
($E_{\HM}(29)-3^{28}\approx 2.45\cdot 10^{10}>0$) and at $n=46$ among
even $n$, and fails for all larger $n$ of either parity that we have
checked ($n\le 60$).
(ii) \emph{Provably}, it fails for all sufficiently large $n$.
Indeed, by Pascal's rule
$\binom{n}{k}-\binom{n-1}{k-1}=\binom{n-1}{k}$, one has the exact
decomposition
\begin{align*}
E_{\HM}(n)-3^{n-1}
&=\underbrace{\sum_{k>\lfloor n/2\rfloor}\binom{n-1}{k}2^{n-k}}_{\text{tail excess}}
-\underbrace{\sum_{3\le k\le\lfloor n/2\rfloor}
\Bigl(\binom{n-k-1}{k-1}-1\Bigr)2^{n-k}}_{\text{HM saving}}\\
&\qquad-\underbrace{\bigl(2^{n-1}+(n-1)2^{n-2}\bigr)}_{\text{absent layers }k=1,2}.
\end{align*}
The tail excess is $\Theta\bigl(2^{3n/2}n^{-1/2}\bigr)$ (the maximal
term sits at $k=\lceil (n+1)/2\rceil$). The saving equals $(1-o(1))\,t_n$, where
$t_n:=\sum_{k\ge1}\binom{n-k-1}{k-1}2^{n-k}$ has generating function
$\sum_n t_nx^n=\sum_{k\ge1}\bigl(2x^{2}/(1-2x)\bigr)^{k}$, a rational
function whose dominant singularity is the root
$x_0=(\sqrt3-1)/2$ of $2x^{2}+2x-1=0$; hence
$t_n=\Theta\bigl((1+\sqrt3)^n\bigr)
=2^{(\log_2(1+\sqrt3)+o(1))n}$ with
$\log_2(1+\sqrt3)\approx 1.4499<\tfrac32$; the absent layers
contribute only $O(n2^{n})$. Hence the positive tail term dominates
for all large $n$. The parity dependence arises because for
odd $n$ the layer $k=\lceil n/2\rceil$ falls outside the
Hilton--Milner range ($n<2k$) and contributes the full binomial
$\binom{n}{k}$. Theorem~\ref{thm:main-mu} bypasses this obstruction by
working directly at the MLS level.
\end{remark}

\subsection{A cross-layer constraint}

The next observation shows that layer-by-layer bounds are far from
tight once the structure of a single layer is known.

\begin{proposition}[Cross-layer constraint]\label{prop:cross-layer}
Let $\mathcal F\subseteq L_{\ge 3}$ be an intersecting family with
$\mathcal F\cap L_3\subseteq\Star(1)$ and
$|\mathcal F\cap L_3|=\binom{n-1}{2}$ (a maximum $3$-uniform star at
$1$). Then every $T\in\mathcal F$ with $1\notin T$ has $|T|\ge n-2$,
and consequently
\[
\Lambda(\mathcal F)\;\le\;
\bigl(3^{n-1}-2^{n-1}-(n-1)\cdot 2^{n-2}\bigr)
+\bigl((n-1)\cdot 4+2\bigr)\;<\;3^{n-1}\quad\text{for all }n\ge 4.
\]
\end{proposition}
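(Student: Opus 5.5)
The plan is to use the full $3$-uniform star at $1$ to pin down the sets avoiding $1$, and then bound $\Lambda$ by summing over all sets that could possibly still occur.

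\emph{Step 1 (structure of sets avoiding $1$).} Since $\mathcal F\cap L_3\subseteq\Star(1)$ has the maximum size $\binom{n-1}{2}$, it is exactly $\{\{1,a,b\}:a\ne b\in[n]\setminus\{1\}\}$. Let $T\in\mathcal F$ with $1\notin T$. Then $T$ must meet every $\{1,a,b\}$, and $1\notin T$, so $T\cap\{a,b\}\ne\varnothing$ for all distinct $a,b\in[n]\setminus\{1\}$. Hence $([n]\setminus\{1\})\setminus T$ contains no two elements, so it has at most one element. This gives $|T|\ge n-2$, by the same mechanism as in Lemma~\ref{lem:reconstruct} and the rigidity argument of Lemma~\ref{lem:rigidity}.

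\emph{Step 2 (bounding $\Lambda$).} I will not try to use the intersecting property beyond Step 1; instead I bound $\Lambda(\mathcal F)$ by the total weight of all admissible candidate sets.

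The members containing $1$ lie in $\Star(1)\cap L_{\ge3}$. By \eqref{eq:star-measure} at $p=\tfrac13$ (equivalently, the binomial identity in Proposition~\ref{prop:Zcap}), $\Lambda(\Star(1))=3^{n-1}$. Removing the singleton $\{1\}$ (weight $2^{n-1}$) and the $n-1$ two-sets $\{1,j\}$ (weight $2^{n-2}$ each) leaves $3^{n-1}-2^{n-1}-(n-1)2^{n-2}$.

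The members avoiding $1$ are subsets of $[n]\setminus\{1\}$ of size $n-2$ or $n-1$. There are $n-1$ sets of the first kind, each of weight $2^2=4$, and one of the second kind, of weight $2$. These contribute at most $4(n-1)+2$. For small $n$ some of these sets may fail to lie in $L_{\ge3}$, but that only helps, since we want an upper bound. Adding the two contributions gives the displayed bound.

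\emph{Step 3 (strict inequality).} It remains to check that $4(n-1)+2<2^{n-1}+(n-1)2^{n-2}$ for $n\ge4$. At $n=4$ this reads $14<8+12$. The right-hand side at least doubles with each step while the left-hand side grows by $4$, so a one-line induction finishes it.

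I do not expect a real obstacle here. The only point needing care is Step 1, namely making explicit that missing two elements of $[n]\setminus\{1\}$ produces a $3$-set $\{1,a,b\}$ disjoint from $T$.
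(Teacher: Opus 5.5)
Your proof is correct and matches the paper's argument essentially line for line. The full $3$-uniform star at $1$ forces every $1$-free member to miss at most one element of $[n]\setminus\{1\}$; the bound is then $\Lambda(\Star(1)\cap L_{\ge3})$ plus the weights $4$ and $2$ of the at most $n-1$ possible $(n-2)$-sets and the single $(n-1)$-set avoiding $1$, and your explicit check of $4(n-1)+2<2^{n-1}+(n-1)2^{n-2}$ for $n\ge4$ fills in the final strict inequality, which the paper leaves implicit.
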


\begin{proof}
If $1\notin T$, then for every triple $\{1,a,b\}\in\mathcal F\cap L_3$
we need $T\cap\{a,b\}\neq\varnothing$; the pairs $\{a,b\}$ range over
all of $\binom{[n]\setminus\{1\}}{2}$, and a set
$T\subseteq[n]\setminus\{1\}$ meeting all of them must miss at most one
element of $[n]\setminus\{1\}$, i.e.\ $|T|\ge n-2$. The exponent
estimate follows from
$\Lambda(\Star(1)\cap L_{\ge 3})=3^{n-1}-2^{n-1}-(n-1)\cdot 2^{n-2}$,
plus at most $n-1$ sets of size $n-2$ (each contributing $4$; this
term is vacuous for $n=4$) and one
set of size $n-1$ (contributing $2$) not containing~$1$.
\end{proof}


\section{Supplementary figures}\label{app:figs}

\begin{figure}[t]
\centering
\includegraphics[width=0.68\textwidth]{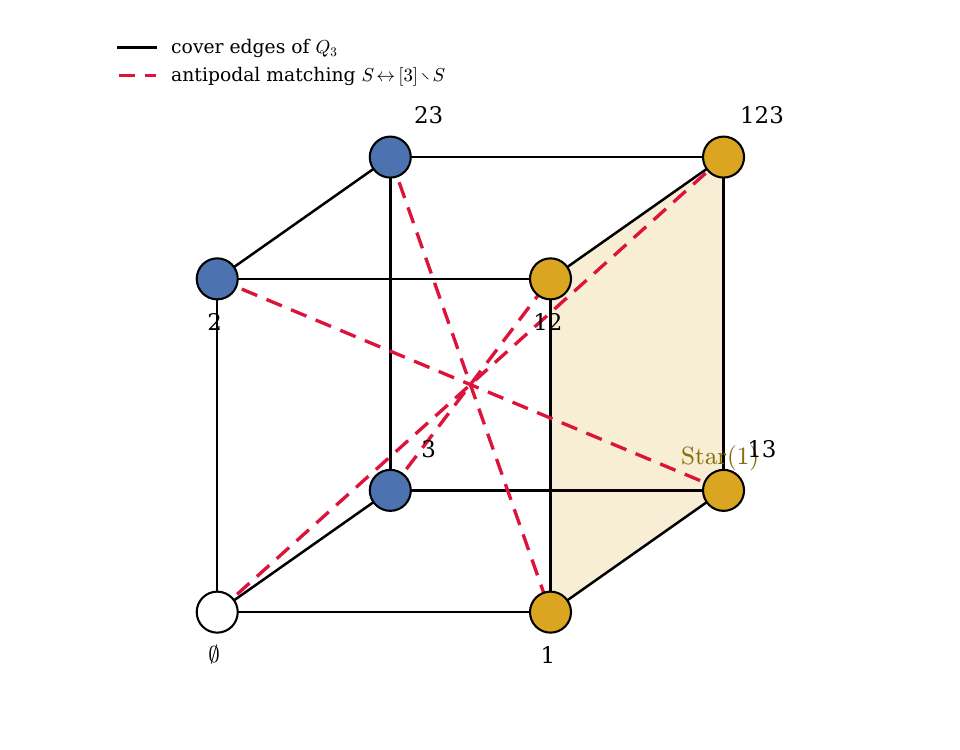}
\caption{The cube $Q_3=2^{[3]}$ with cover edges (black), the
complementary-pair matching $S\leftrightarrow[3]\setminus S$ (red), and
the facet $\Star(1)$ (shaded); the empty set $\varnothing$ (bottom
corner) is excluded from $V_3$ and carries no vertex of $D_3$.}
\label{fig:cube}
\end{figure}

\begin{figure}[t]
\centering
\includegraphics[width=0.72\textwidth]{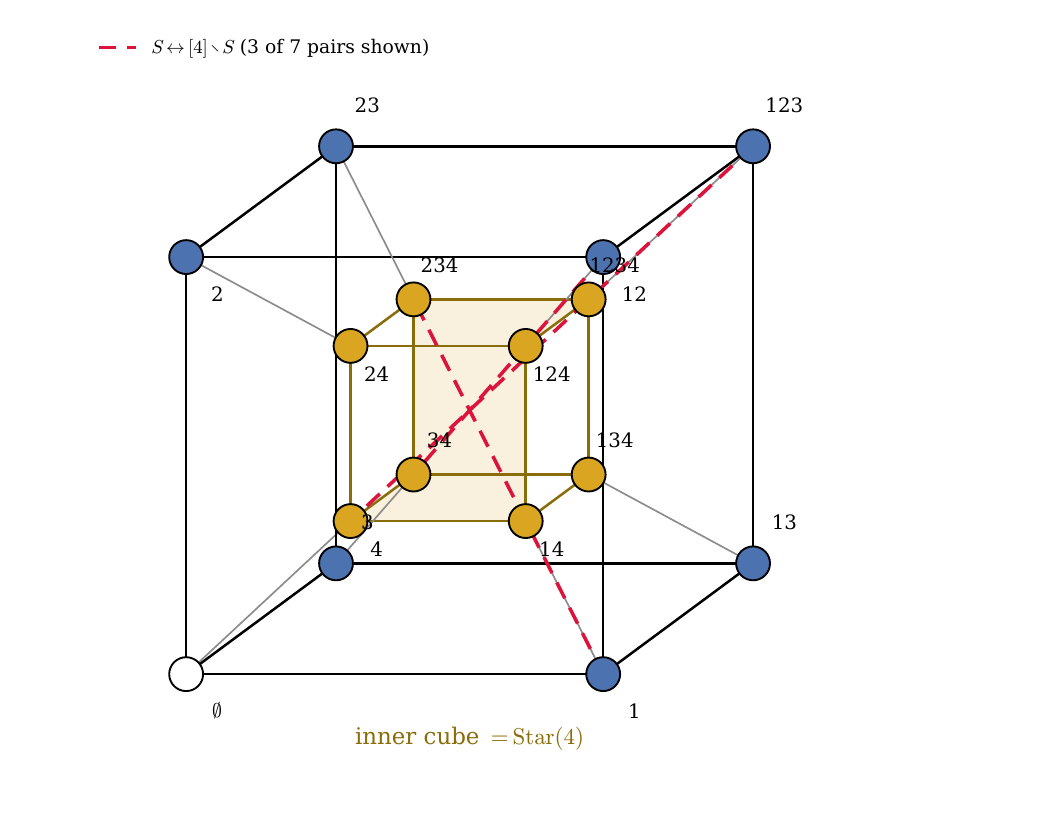}
\caption{The same structure one dimension up, on the tesseract
$Q_4=2^{[4]}$, drawn as the standard two-nested-cubes projection; the
empty set $\varnothing$ is again excluded from $V_4$. The inner cube is
exactly the facet $\Star(4)$; three complementary pairs are shown (of
the seven present) to keep the figure legible.}
\label{fig:tesseract}
\end{figure}

\end{document}